   \def\MR#1{}
\newtheorem{theorem}{Theorem}[section]
\newtheorem{lemma}[theorem]{Lemma}
\newtheorem{proposition}[theorem]{Proposition}
\newtheorem{corollary}[theorem]{Corollary}
\theoremstyle{definition}
\newtheorem{remark}[theorem]{Remark}
\newtheorem*{notation}{Notation}
\numberwithin{equation}{section}
\renewcommand{\l}{\lambda}
\newcommand{\RR}{\ensuremath{\mathbb{R}}}
\newcommand{\prtl}{\ensuremath{\partial}}
\newcommand{\supp}{\ensuremath{\mathrm{supp}}}
\newcommand{\veps}{\ensuremath{\varepsilon}}
\newcommand{\diag}{\ensuremath{\mathrm{diag}}}
\newcommand{\tubel}{\mathcal{T}_{\l^{-1/2}}(\gamma)}
\newcommand{\tuber}{\mathcal{T}_R}
\title[Logarithmic gain in critical $L^p$ bounds]{Logarithmic improvements in $L^{p}$ bounds for eigenfunctions at the critical exponent in the presence of nonpositive curvature}
\thanks{The first author was supported in part by the National Science Foundation grants DMS-1301717 and DMS-1565436, and the second by the National Science Foundation grant DMS-1665373.} 
\author[M. D. Blair]{Matthew D. Blair}
\address{Department of Mathematics and Statistics, University of New Mexico, Albuquerque, NM 87131, USA}
\email{blair@math.unm.edu}
\author[C. D. Sogge]{Christopher D. Sogge}
\address{Department of Mathematics, Johns Hopkins University, Baltimore, MD 21093, USA}
\email{sogge@jhu.edu}
\begin{document}
\begin{abstract}
  We consider the problem of proving $L^p$ bounds for eigenfunctions of the Laplacian in the high frequency limit in the presence of nonpositive curvature and more generally, manifolds without conjugate points.  In particular, we prove estimates at the ``critical exponent'' $p_c = \frac{2(d+1)}{d-1}$, where a spectrum of scenarios for phase space concentration must be ruled out.  Our work establishes a gain of an inverse power of the logarithm of the frequency in the bounds relative to the classical $L^p$ bounds of the second author.
\end{abstract}
\maketitle
\section{Introduction}\label{S:intro}
Let $(M,g)$ be a boundaryless, compact, connected Riemannian manifold with $d=\dim(M)\geq 2$ and $\Delta_g$ the associated negative Laplace-Beltrami operator.  The spectrum of $-\Delta_g$ is discrete and we let $e_\lambda$ denote any $L^2$-normalized eigenfunction
\begin{equation}\label{efcn}
  (\Delta_g + \lambda^2)e_\lambda =0, \quad \|e_\lambda\|_{L^2(M)}=1.
\end{equation}
Here $L^p(M)$ is the space of $L^p$ functions with respect to Riemannian measure $dV_g$.  The frequency $\lambda$ thus parameterizes the eigenvalues of
\begin{equation*}
P:=\sqrt{-\Delta_g}.
\end{equation*}

We are concerned with $L^p$ bounds on eigenfunctions $e_\lambda$ in the high frequency limit $\lambda\to \infty$ and more generally, ``spectral clusters", meaning sums of eigenfunctions in the range of a spectral projector $\mathbf{1}_{[\lambda,\lambda+h(\lambda)]}(P)$, the operator which projects a function onto all the eigenspaces of $P$ whose corresponding eigenvalue lies in a band of width $h(\lambda)$ to the right of $\lambda$.  In \cite{sogge88}, the second author showed that for $2<p\leq \infty$, with $h(\lambda) \equiv 1$
\begin{gather}
  \left\| \mathbf{1}_{[\lambda,\lambda+1]}(P) \right\|_{L^2(M) \to L^p(M)} \lesssim \lambda^{\delta(p,d)} ,\qquad \lambda \geq 1,\label{sogge88bd}
  \\ \delta(p,d) = \begin{cases}
  \frac{d-1}{2}-\frac{d}{p}, & p_c\leq p \leq \infty,\\
    \frac{d-1}{2}(\frac 12-\frac 1p), & 2 \leq p \leq p_c,
    \end{cases} \qquad p_c := \frac{2(d+1)}{d-1} .\label{soggeexp}
\end{gather}
Note that $\delta(p_c,d) = 1/p_c$.  The case $p=\infty$ here can be seen as a consequence of classical pointwise Weyl laws. One of the key contributions of \cite{sogge88} was to treat these bounds at the ``critical" exponent $p_c$, so that interpolation yields the remaining cases.  This gives a discrete analog of the Stein-Tomas Fourier restriction bound for the sphere \cite[p.135]{soggefica} (or more precisely the adjoint bound).  Given \eqref{sogge88bd}, any eigenfunction as in \eqref{efcn} satisfies
\begin{equation}\label{soggeefcn}
  \|e_\lambda\|_{L^p(M)} \lesssim \lambda^{\delta(p,d)}, \qquad \lambda \geq 1.
\end{equation}

As observed in \cite{sogge86}, the exponent $\delta(p,d)$ in \eqref{soggeefcn} cannot be improved when $(M,g)$ is the round sphere.  The zonal harmonics provide a sequence of eigenfunctions saturating the bound when $p_c \leq p \leq \infty$ and the highest weight spherical harmonics saturate it when $2< p \leq p_c$.  This is not surprising since the spectrum of $P$ in this setting is nearly arithmetic, meaning the projector in \eqref{sogge88bd} is essentially the same as projection onto an eigenspace. However, the geometries for which the corresponding eigenfunctions saturate \eqref{soggeefcn} are in some sense exceptional, and if it does occur then the geodesic flow expects to have similar dynamics to that of the sphere.  Well known classical Gaussian beam constructions show that when $(M,g)$ has a stable elliptic orbit, then there are highly accurate approximate eigenfunctions that saturate \eqref{sogge88bd} when $2<p\leq p_c$.  The works \cite{SoggeZelditchMaximal}, \cite{SoggeTothZelditch}, \cite{SoggeZelditchFocal} characterize geometries which saturate \eqref{soggeefcn} when $p=\infty$, showing, for instance, that in the real analytic case, this will only occur if the unit speed geodesics emanating from a point $x \in M$ loop back to a point at a common time.  These features are absent from several Riemannian manifolds of interest such as  manifolds without conjugate points.

When $(M,g)$ has nonpositive sectional curvatures, it is known that
\begin{equation}\label{loggaininfty}
    \left\| \mathbf{1}_{[\lambda,\lambda+(\log\lambda)^{-1}]}(P) \right\|_{L^2(M) \to L^p(M)} \lesssim_{p} \frac{\lambda^{\delta(p,d)}}{(\log\lambda)^{\sigma(p,d)}}, \qquad p \neq p_c,
\end{equation}
for some exponent $\sigma(p,d)>0$.  For $p_c < p \leq \infty$, a work of Hassell and Tacy \cite{HassellTacyNonpos} shows that one can take $\sigma(p,d) = \frac 12$ here\footnote{The works \cite{Berard}, \cite{bonthonneau2016lower} show this also holds if $(M,g)$ merely lacks conjugate points.}, though the implicit constant tends to infinity as $p \searrow p_c$.  Their work draws from a classical work of Berard \cite{Berard} on the remainder in the pointwise Weyl law (which already implies the $p=\infty$ case).  When $2<p<p_c$, the bounds \eqref{loggaininfty} result from the authors' works \cite{BlairSoggeRefinedHighDimension}, \cite{BlairSoggeToponogov}, but the exponents obtained satisfy $\lim_{p\to p_c-} \sigma(p,d)=0$, again leaving the critical $p=p_c$ case open.  In particular, \cite{BlairSoggeRefinedHighDimension} shows that the left hand side of \eqref{loggaininfty} is dominated by so-called ``Kakeya-Nikodym'' averages, which bound the mass of these spectral clusters within shrinking tubular neighborhoods about a geodesic segment.  The work \cite{BlairSoggeToponogov} then shows that these averages are then seen to exhibit a logarithmic gain in the presence of nonpositive curvature (cf. \eqref{kaknikaverage} below).

The two strategies outlined here are therefore very effective towards obtaining a logarithmic gain in the $L^2(M)\to L^p(M)$ bounds on the projector in \eqref{loggaininfty} when $p \neq p_c$: they either rule out mass concentration similar to the zonal harmonics, yielding improvements for $p_c < p \leq \infty$, or concentration similar to the highest weight spherical harmonics, yielding improvements for $2<p<p_c$.  However, by themselves they do not seem to give an effective strategy for obtaining a logarithmic gain at the critical exponent $p_c$.

We also remark that a work of Hezari and Rivi\`ere \cite{HezariRiviere} uses quantum ergodicity methods to show a logarithmic gain in the $L^{p}$ bounds on eigenfunctions in the presence of negative curvature for all $2<p\leq \infty$, but this is only for a full density subsequence of them.  In the present work, we are interested in bounds satisfied by the full sequence.

A breakthrough on this critical problem came from the second author in \cite{sogge2015improved}, who demonstrated a gain of an inverse power of $\log \log \lambda$ in the $L^2(M)\to L^{p_c}(M)$ bounds on this spectral projector.  The strategy there was to instead consider bounds on the projector in weak-$L^p$ spaces, which in turn yields strong $L^p$ bounds after interpolation with Lorentz space bounds of Bak and Seeger \cite{BakSeegerST}.  The weak bounds were then treated by methods analogous to Bourgain's approach to Fourier restriction to the sphere in \cite[\S6]{BourgainBesicovitch}.  We outline the strategy of \cite{sogge2015improved} in \S\ref{S:reviewandmicro} below.

In the present work, we show that the $\log \log \lambda$ gain exhibited in \cite{sogge2015improved} can be improved to a $\log \lambda$ gain.  This is significant as the latter essentially corresponds to the largest time scale over which the frequency localized wave kernel is currently understood in the setting of nonpositive curvature, closely related to considerations involving the ``Ehrenfest time" in quantum mechanics.  In what follows, $\rho$ is an even, real valued function satisfying
\begin{equation*}
  \rho \in \mathcal{S}(\RR),\; \rho(0)=1, \;\supp(\widehat{\rho}) \subset \left\{|t| \in (1/4,1/2)\right\}.
\end{equation*}
We also assume that $|\rho(t)| \leq 1$ for every $t\in\RR$ so that for any  $\tau,\lambda>0$
\begin{equation}\label{rhoLinfty}
  \left\| \rho(\tau(\lambda - P))\right\|_{L^2(M)\to L^2(M)} \leq 1.
\end{equation}
Throughout the work, we let $c_0>0$ be a sufficiently small but fixed constant and define $T=T(\lambda)$ by
\begin{equation}\label{Tdef}
  T:= c_0 \log \lambda.
\end{equation}

\begin{theorem}\label{T:mainthm}
Suppose $(M,g)$ has nonpositive sectional curvatures. There exists $\veps_0>0$ such that
  \begin{equation}\label{mainbound}
    \| \rho(T(\lambda - P))\|_{L^2(M) \to L^{p_c}(M)} \lesssim \frac{\lambda^{1/p_c}}{(\log\lambda)^{\veps_0}}, \qquad p_c = \frac{2(d+1)}{d-1}.
  \end{equation}
Consequently,
\begin{equation}\label{loggainpc}
    \left\| \mathbf{1}_{[\lambda,\lambda+(\log\lambda)^{-1}]}(P) \right\|_{L^2(M) \to L^{p_c}(M)} \lesssim \frac{\lambda^{1/p_c}}{(\log\lambda)^{\veps_0}}.
\end{equation}
and in particular, any eigenfunction as in \eqref{efcn} satisfies
\begin{equation*}
 \|e_\lambda\|_{L^{p_c}(M)} = \mathcal{O}(\lambda^{1/p_c}(\log\lambda)^{-\veps_0}).
\end{equation*}
\end{theorem}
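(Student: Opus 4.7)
The plan is to follow the Bak--Seeger weak-type strategy introduced in \cite{sogge2015improved} but pushed to the Ehrenfest scale $T = c_0\log\lambda$. First I would reduce the strong bound \eqref{mainbound} to a weak $L^{p_c}$ estimate
\begin{equation*}
  \|\rho(T(\lambda-P))f\|_{L^{p_c,\infty}(M)} \lesssim \lambda^{1/p_c}(\log\lambda)^{-\sigma_0}\|f\|_{L^2(M)}
\end{equation*}
for some $\sigma_0>0$; real interpolation in Lorentz spaces against a strong estimate at an exponent $q>p_c$ (where \cite{HassellTacyNonpos} supplies a full $(\log\lambda)^{-1/2}$ gain) then recovers \eqref{mainbound} with a smaller positive exponent $\varepsilon_0$, via the Bak--Seeger argument \cite{BakSeegerST}.

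To attack the weak bound I would use the Fourier representation
\begin{equation*}
  \rho(T(\lambda-P)) = \frac{1}{2\pi T}\int \widehat{\rho}(t/T)\,e^{it\lambda}\,e^{-itP}\,dt,
\end{equation*}
so that only $|t|\sim T$ contributes. Passing to the universal cover $(\tilde M,\tilde g)$, which has no conjugate points by Cartan--Hadamard, the half-wave kernel admits a global Hadamard-type parametrix for $|t|\leq T$, and the kernel on $M$ is obtained by summing translates under the deck group $\Gamma$. Following the notational hints (the macros $\Sstab$ and $\Sosc$), I would split $\Gamma$ into $\Stab$, the stabilizer of a fixed lifted geodesic $\tilde\gamma$, and its complement $\notstab$, giving a decomposition $\rho(T(\lambda-P)) = \Sstab + \Sosc$. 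The stable piece $\Sstab$ I expect to control pointwise by a Kakeya--Nikodym average over tubes $\tubele$ about projections of $\tilde\gamma$; the logarithmic improvement of such averages under nonpositive curvature from \cite{BlairSoggeToponogov} then supplies the needed $(\log\lambda)^{-\sigma_0}$ gain for this piece. The oscillatory piece $\Sosc$ I would estimate by stationary phase on $\tilde M$: Toponogov comparison forces elements of $\notstab$ to push points off $\tilde\gamma$ at a controlled rate, producing either an improved pointwise bound on the kernel or enough cancellation, in the spirit of \cite{Berard,HassellTacyNonpos}.

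The main obstacle is that neither piece alone yields the weak-$L^{p_c}$ estimate directly: at $p_c$ both zonal-type and Knapp/highest-weight-type concentration scenarios survive the individual estimates, which is precisely why prior work achieved only the $\log\log\lambda$ gain. To close the loop I would, following \cite[\S6]{BourgainBesicovitch} and \cite{sogge2015improved}, set up a bilinear level-set analysis on the superlevel set $E_\alpha = \{x : |\rho(T(\lambda-P))f(x)|>\alpha\}$, bounding $|E_\alpha|$ by pairing with a dual extremizer and exploiting almost orthogonality of contributions localized to disjoint lifted tubes. The count of tubes meeting a given point is governed by the Toponogov volume estimate on $\tilde M$, which at time $T = c_0\log\lambda$ is polynomial in $\lambda$ (the reason for the smallness of $c_0$); combining this count with the Kakeya--Nikodym gain on the stable side and the stationary-phase gain on the oscillatory side, and carefully balancing the trade-off against the weak-type threshold $\alpha^{p_c}|E_\alpha|$, should yield $\sigma_0>0$, and hence $\varepsilon_0>0$ after interpolation. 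The finest point to manage, I expect, is the intermediate regime where lifted orbits come close to but do not lie in $\Stab$, where one must interpolate between the two mechanisms without losing the logarithmic gain.
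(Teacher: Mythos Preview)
Your outline has two genuine gaps.

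First, the decomposition $\rho_\lambda=\Sstab+\Sosc$ along the stabilizer of a \emph{fixed} lifted geodesic $\tilde\gamma$ is ill-posed: the operator $\rho_\lambda$ has no distinguished direction, so there is no canonical $\tilde\gamma$ to stabilize. (The macros you spotted are vestigial and never used in the body.) Such a split only becomes meaningful \emph{after} an angular microlocalization, and that microlocalization is the first of the paper's two central new ingredients. One introduces semiclassical PDOs $Q_\nu$ with symbols supported in cones of aperture $\sim\lambda^{-1/8}$ about flow-invariant unit covector fields; the composed kernel of $Q_\nu\circ\rho_\lambda^2\circ Q_\nu^*$ then \emph{does} have a distinguished geodesic on the cover, and the Toponogov comparison (Lemma~\ref{L:Toponogov}) shows that deck transformations landing outside a fixed-radius tube $\tuber$ about it contribute $O(1)$ each, while the $O(T)$ transformations inside the tube are summed directly. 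This yields a B\'erard-type kernel bound (Theorem~\ref{T:berardmicro}) in which the fatal term $\lambda^{(d-1)/2}e^{CT}$ of \eqref{berardbd} is replaced by $c(\lambda)\lambda^{(d-1)/2}$ with $c(\lambda)\lesssim(\log\lambda)^{-1/2}$. The Kakeya--Nikodym averages you invoke are \emph{not} used here; the paper emphasizes that those degenerate as $p\nearrow p_c$ and that the present argument is independent of the subcritical $L^p$ bounds.

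Second, you are missing the mechanism that transfers the microlocalized gain back to the full operator. This is a bilinear estimate (Theorem~\ref{T:bubblethm}): for $\alpha\le\lambda^{\frac{d-1}{4}+\frac18}$ one has
\(
\alpha\,|\{|\tilde\sigma_\lambda h|>\alpha\}|^{1/p_c}\lesssim\lambda^{1/p_c}\max_\nu\|Q_\nu h\|_{L^2}^{\delta_d}+\lambda^{1/p_c-},
\)
proved via a Whitney decomposition of $(\tilde\sigma_\lambda h)^2$ in the angular index and variable-coefficient bilinear restriction estimates for the off-diagonal pieces (Lemmas~\ref{L:offdiag} and~\ref{L:aobound}). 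The proof of Theorem~\ref{T:mainthm} is then by contradiction, in the spirit of profile decompositions: if \eqref{mainbound} failed along a sequence, Theorem~\ref{T:bubblethm} iteratively extracts $N_k\approx(\log\lambda_k)^{2\veps_1/\delta_d}$ distinct indices $\nu_l$ with $\|Q_{\nu_l}\rho_kf_k\|_{L^2}\gtrsim(\log\lambda_k)^{-\veps_1/\delta_d}$, while Corollary~\ref{C:microlp} controls the removed pieces; almost orthogonality (Proposition~\ref{prop:aoL2}) then forces $\sum_l\|Q_{\nu_l}\rho_kf_k\|_{L^2}^2$ to exceed $\|\rho_kf_k\|_{L^2}^2$, a contradiction. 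Your final paragraph gestures at a ``bilinear level-set analysis'' but supplies none of this structure, and that structure is precisely what upgrades the $\log\log\lambda$ gain of \cite{sogge2015improved} to $\log\lambda$. A smaller point: interpolating the weak-$L^{p_c}$ bound against strong $L^q$ for $q>p_c$ does not recover strong $L^{p_c}$ (Marcinkiewicz misses the endpoint); the paper interpolates instead against the Bak--Seeger $L^2\to L^{p_c,2}$ bound as in \eqref{lorentzinterp}.
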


That \eqref{loggainpc} follows from \eqref{mainbound} is standard. Indeed, taking $c_0$ sufficiently small, $\rho(T(\lambda - P))$ is invertible on the range of $\mathbf{1}_{[\lambda,\lambda+(\log\lambda)^{-1}]}(P)$ with inverse uniformly bounded on $L^2(M)$. We are thus focused on proving \eqref{mainbound}.

\begin{remark}\label{R:optimalexponentintro}
The argument shows that in fact
\begin{equation}\label{eps0}
\veps_0 = \begin{cases}
            \frac{4}{3(d+1)^3}, & d \geq 4, \\
            \frac{1}{48}\,- & d =3\\
            \frac{1}{72}, & d=2,
          \end{cases}
\end{equation}
where in the $d=3$ case the minus sign means that the exponent can be taken strictly less than but arbitrarily close to 1/48.
As noted in Remark \ref{R:negcurvature} below, this exponent can be improved when $d=2,3$ if it is assumed that $(M,g)$ has strictly negative sectional curvatures, leading to an exponent of $\veps_0 = \frac{1}{36}$ when $d=2$ and a removal of the loss when $d=3$ so that $\veps_0 = \frac{1}{48}$.
\end{remark}


To gain an appreciation as to why treating the case of ``critical'' exponents is subtle, it is helpful to consider the analog of \eqref{sogge88bd} for the constant coefficient Laplacian on $\RR^d$, which amounts to considering the Fourier multiplier onto frequencies $\{\xi \in \RR^d: \lambda \leq |\xi| \leq \lambda+1\}$.  Correspondingly, take $f_{\lambda,\theta}\in L^2(\RR^d)$, $\|f_{\lambda,\theta}\|_{L^2}=1$, to be defined as the function whose Fourier transform is the characteristic function of the set
\begin{equation}\label{flambdatheta}
 \left\{ \xi \in \RR^d: \lambda \leq |\xi| \leq \lambda+1, \left| \frac{\xi}{|\xi|}-(1,0,\dots,0)\right| \leq \theta \right\},
\end{equation}
multiplied by an $L^2$-normalization constant $c_{\lambda,\theta}\approx \lambda^{-\frac{d-1}{2}}\theta^{-\frac{d-1}{2}}$. When $\theta = \lambda^{-1/2}$, a linearization of the phase function of the Fourier integral $\int e^{ix\cdot \xi} \hat{f}_{\lambda,\lambda^{-1/2}}(\xi)d\xi$ shows that $|f_{\lambda,\lambda^{-1/2}}(x)|\gtrsim \lambda^{\frac{d-1}{4}}$ on the set
\begin{equation}\label{knappregion}
 \left\{x = (x_1,x') \in \RR\times \RR^{d-1}:  |x'| \leq \veps \lambda^{-1/2}, |x_1| \leq \veps \right\},
\end{equation}
for some $\veps>0$.  It is then easily verified that $\|f_{\lambda,\lambda^{-1/2}}\|_{L^p} \gtrsim \lambda^{\frac{d-1}{2}(\frac 12-\frac 1p)}$, resulting in a function analogous to the ``Knapp example" from Fourier restriction theory.  On the other hand, when $\theta =1$, stationary phase suggests
\begin{equation}\label{slowdecay}
 |f_{\lambda,1}(x)| \approx \lambda^{\frac{d-1}{2}}(1+\lambda|x|)^{-\frac{d-1}{2}}, \qquad |x_1| \gtrsim |x'|,
\end{equation}
for sufficiently many $x$ so that $\|f_{\lambda,1}\|_{L^p} \gtrsim \lambda^{\frac{d-1}{2}-\frac dp}$ when $p>\frac{2d}{d-1}$.  This yields families of functions which saturate the exponent in \eqref{sogge88bd} when $2<p\leq p_c$ and $p_c \leq p \leq \infty$ respectively.  However, by carefully splitting into oscillatory regions where stationary phase can be applied and $\theta$-dependent non-oscillatory regions similar to \eqref{knappregion}, it can be seen that at $p=p_c$, $\|f_{\lambda,\theta}\|_{L^{p_c}} \gtrsim \lambda^{1/p_c}$ for \emph{any} $\theta \in [\lambda^{-1/2}, 1]$, hence its designation as the ``critical'' exponent.  These computations were carried out rigorously in \cite{tacy2016note}.

Analogous constructions can be carried out for suitable approximations to $\mathbf{1}_{[\lambda,\lambda+1]}(P)$ on any $(M,g)$, only now the $x_1$ axis is replaced by a geodesic segment and Riemannian distance replaces Euclidean (see \cite[Ch.5]{soggefica}).  Moreover, localization analogous to that in \eqref{flambdatheta} can be achieved by pseudodifferential operators (PDOs).  These considerations demonstrate that in order to show \eqref{loggainpc} at the critical exponent, one must rule out a spectrum of scenarios for phase space concentration: simply disproving either maximal mass concentration in $\lambda^{-1/2}$ tubular neighborhoods or decay akin to \eqref{slowdecay} as in previous works is not enough by itself.  We shall see that the method in \cite{sogge2015improved} is effective in proving nonconcentration for $\theta \geq \lambda^{-1/2+\veps}$ for any fixed $\veps \in (0,1/2)$.  A key idea in the present work to accomplish this for microlocalized modes corresponding to the remaining cases $\theta \in (\lambda^{-1/2} , \lambda^{-1/2+\veps})$.

Unlike \cite{sogge2015improved}, the present work does not rely on the known bounds \eqref{loggaininfty} when $2<p<p_c$. The bounds in Theorem \ref{T:mainthm} can be interpolated with the $p=2$ case to show $L^p$ bounds for this range of $p$.  As noted above, the exponent $\sigma(p,d)$ vanishes as $p \nearrow p_c$ so the interpolation yields an improved exponent for $p$ interval to the left of $p_c$, but not all values of $2<p<p_c$.

\subsection*{Outline of the work}
In \S\ref{S:reviewandmicro}, we review the method introduced in \cite{sogge2015improved}.  We then show how to generate improvements on this approach for modes microlocalized to a conic sector about a fixed covector field, analogous to the angular localization in \eqref{flambdatheta}.  This adapts the approach in \cite{BlairSoggeToponogov}.

The third section then details a proof by contradiction for our main result, Theorem \ref{T:mainthm}.  The arguments here are partially inspired by strategies in nonlinear PDE, particularly dispersive ones, which seek to characterize the phase space concentration of solutions which develop a singularity, then disprove the possibility of such concentration.  While the present work does not develop an explicit ``profile decomposition" for spectral clusters, akin to those which are common for nonlinear Schr\"odinger equations, the approach here is reminiscent of works in that vein such as \cite{BourgainRefinements}, \cite{BegoutVargas}.  In  \S\ref{SS:sigmaanalysis}, we review the local structure of spectral multipliers which roughly project onto frequency bands of width 1 and then define an almost orthogonal decomposition adapted to these operators which achieves the microlocalization considered in \S\ref{S:reviewandmicro}.  This culminates in the statement of Theorem \ref{T:bubblethm}, which bounds the weak-$L^{p_c}$ quasi-norms of such spectral multipliers by the mass of the elements in the decomposition.  The contradiction is then finalized in \S\ref{SS:finalize}.  The proof of Theorem \ref{T:mainthm} thus relies in a crucial way on Theorem \ref{T:bubblethm} and the improvements from \S\ref{S:reviewandmicro}, in particular Corollary \ref{C:microlp}.  Together these are the central developments in the present work.

The fourth section sets the stage for bilinear estimates on approximate projections onto bands of width 1, which will yield the proof of Theorem \ref{T:bubblethm}.  We then need to show how the elements of our decomposition behave under these approximate projections, which is done in \S\ref{SS:pdobilinear}.  The bilinear estimates can then be concluded.  The final subsection \S\ref{SS:aoforbilinear} then shows that products of the members of the decomposition obey an almost orthogonality principle in $L^r$ spaces, a crucial lemma in the proof of the bilinear bounds.


The fifth and final section then considers results for geometric hypotheses on $(M,g)$ weaker than nonpositive curvature.

\subsection*{Semiclassical analysis}
 This work uses a modest amount of semiclassical analysis, though instead of using the notation $h$ commonly used in this practice, we use $\lambda = h^{-1}$ as the frequency parameter.  The primary use is to quantize various compactly supported pseudodifferential symbols $q_\lambda(x,\xi)$ so that $Q_\lambda=$Op$(q_\lambda)$ is the operator with Schwartz kernel
 \begin{equation}\label{standardquant}
  Q_\lambda(x,y) = \frac{\lambda^{d}}{(2\pi)^d} \int e^{i\lambda(x-y)\cdot \xi} q_\lambda(x,\xi)\,d\xi\quad \text{(standard quantization)}.
 \end{equation}
 In the present work, one will be able to view these operations as the result of taking a classical symbol, compactly supported where $|\xi| \approx \lambda$ with uniform estimates in $S^{0}_{1,0}$, $S^{0}_{7/8,1/8}$, and applying the rescaling $\xi\mapsto \lambda\xi$.  Such a rescaling yields symbols in the classes $S_0,S_{1/8}$ respectively in the sense of \cite[\S4.4]{ZworskiSemiclassicalAnalysis}  The semiclassical Fourier transform is thus defined consistently by $\mathscr{F}_\lambda(f)(\xi) = \hat{f}(\lambda\xi)$ with inverse $\mathscr{F}_\lambda^{-1}(f)(x) = \lambda^{d}\check{f}(\lambda x)$ where $\hat{f}$, $\check{f}$ are the classical Fourier transform and its inverse respectively. The use of semiclassical quantization makes for a convenient use of stationary phase.

\begin{notation}
We take the common convention that $A\lesssim B$ means that $A \leq CB$ for some large constant $C$ which depends only on $(M,g)$ and in particular is uniform in $\lambda$ and possibly other parameters except when they are given in the subscript of $\lesssim$.  Similarly, $A \ll B$ means that $A\leq cB$ for some small uniform constant $c$.   The notation $A\approx B$ means that $A \lesssim B$ and $B\lesssim A$.  Certain variables may be reassigned when the analysis in a given section is independent of prior sections.

Throughout, $\rho_\lambda$ abbreviates the operator $\rho(T(\lambda - P))$ in \eqref{mainbound}, where $T$ is as in \eqref{Tdef}.  We will also use ``local" projectors $\sigma_\lambda$ defined by $\rho(\tilde{c}_0(\lambda-P))$ for some fixed, but sufficiently small constant $\tilde{c}_0$ (much less than the injectivity radius of $(M,g)$).  When these operators are restricted to some sequence of $\lambda_k \to \infty$, we abbreviate $\rho_{\lambda_k}$, $\sigma_{\lambda_k}$ as $\rho_k$, $\sigma_k$ respectively.  Finally, we use $\Theta_g(x,y)$ to denote the Riemannian distance between two points $x,y$ on $M$.
\end{notation}

\subsection*{Acknowledgement} The authors are grateful to the anonymous referee for numerous comments which improved the exposition in this work.

\section{Review of \cite{sogge2015improved} and improved weak bounds for microlocalized modes}\label{S:reviewandmicro}
\subsection{Review of \cite{sogge2015improved}}\label{SS:soggereview} We review the arguments of the second author in \cite{sogge2015improved} used to prove \eqref{mainbound} with $\log\lambda$ replaced by $\log\log\lambda$.  We begin by recalling weak-$L^p$ and Lorentz spaces on $(M,g)$ with respect to Riemannian measure.  The weak-$L^p$ functions are the measurable functions for which the following quasi-norm is finite
\begin{equation*}
  \|f\|_{L^{p,\infty}(M)} = \sup_{\alpha>0} \alpha \left| \left\{ x \in M: |f(x)|>\alpha \right\}\right|^{\frac 1p},
\end{equation*}
where the bars are used denote the Riemannian measure.  The well-known Chebyshev inequality shows that functions in $L^p(M)$ are also in weak-$L^p$ with $\|f\|_{L^{p,\infty}(M)} \leq \|f\|_{L^p(M)}$.  More generally, the Lorentz spaces are a family of interpolation spaces which include both $L^p(M)$ and weak-$L^p$.  They are defined by first considering the distribution function for measurable functions as
\begin{equation*}
  d_f(\alpha) = \left| \left\{ x \in M: |f(x)|>\alpha \right\}\right|
\end{equation*}
then defining $L^{p,q}(M)$ as being the measurable functions for which the following quasi-norm is finite
\begin{equation*}
  \|f\|_{L^{p,q}(M)}:= p^{\frac 1q}
  \left(\int_0^\infty\left[ d_f(s)^{1/p} s\right]^q\frac{ds}{s}\right)^{\frac 1q}, \qquad 0<q<\infty.
\end{equation*}
Lorentz spaces are often equivalently defined using the decreasing rearrangement of $f$.  A well known identity from measure theory shows $\|f\|_{L^{p,p}(M)} = \|f\|_{L^p(M)}$. As suggested by the notation above, when $q=\infty$ the Lorentz space $L^{p,\infty}(M)$ is just the weak-$L^p$ functions.

As observed in \cite[\S4]{sogge2015improved}, an interpolation in Lorentz spaces yields (recalling $\rho_\lambda := \rho(T(\lambda-P))$)
\begin{equation}\label{lorentzinterp}
  \| \rho_\lambda\|_{L^2(M) \to L^{p_c}(M)} \lesssim \| \rho_\lambda\|_{L^2(M) \to L^{p_c,\infty}(M)}^{1-\frac{2}{p_c}} \| \rho_\lambda\|_{L^2(M) \to L^{p_c,2}(M)}^{\frac{2}{p_c}}.
\end{equation}
In \cite[Corollary 1.3]{BakSeegerST}, Bak and Seeger showed $\| \rho_\lambda \|_{L^2 \to L^{p_c,2}} = \mathcal{O}(\lambda^{\frac{1}{p_c}})$. Consequently, it suffices to obtain weak $L^{p_c}$ bounds on $\rho_\lambda$.

We consider a slightly more general setting for the weak bounds, considering instead weak bounds for $Q_\lambda \circ \rho_\lambda$ where $Q_\lambda$ is either the identity or a semiclassical pseudodifferential operator as in \eqref{standardquant} corresponding to a compactly supported symbol $q_\lambda \in S_{1/8}$ in that $|\prtl^\alpha q| \lesssim_\alpha \lambda^{1/8}$.  Note that \cite{sogge2015improved} only considers the case where $Q_\lambda$ is the identity.

Fix a unit vector $f \in L^2(M)$, then consider for $\alpha>0$ and some coordinate system $\Omega \subset M$ the sets $A_\alpha$ defined by
\begin{equation}\label{Aalphadef}
  A_\alpha:=\left\{x \in \Omega: \left|\left((Q_\lambda\circ \rho_\lambda)f\right)(x)\right| > \alpha \right\}, \qquad \|f\|_{L^2(M)}=1.
\end{equation}
Denoting the Riemannian measure of this set as $|A_\alpha|$, we seek a bound
\begin{equation}\label{mainweakbound}
  \alpha|A_\alpha|^{\frac 1{p_c}} \lesssim \lambda^{\frac 1{p_c}} (\log\lambda)^{-\veps_1}, \qquad \veps_1 := \frac{\veps_0 p_c}{p_c-2} = \frac{d+1}{2}\veps_0 .
\end{equation}

We begin by restricting attention to the case
\begin{equation}\label{loweralpha}
\lambda^{\frac{d-1}{4}}(\log\lambda)^{-\frac 12} \lesssim  \alpha.
\end{equation}
We now set
\begin{equation}\label{rchoice}
 r:=\lambda \alpha^{-\frac{4}{d-1}}(\log \lambda)^{-\frac{2}{d-1}}  \qquad \text{ so that } \qquad \left(\lambda r^{-1}\right)^{\frac{d-1}{2}} = \alpha^2\log \lambda.
\end{equation}
Given \eqref{loweralpha}, $r \ll 1$.  At the cost of replacing $A_\alpha$ by a set of proportional measure, we may write $A_\alpha = \cup_j A_{\alpha,j}$ where $d(A_{\alpha,j},A_{\alpha,k}) > C_0 r$ in Euclidean distance for some $C_0>0$ sufficiently large when $j \neq k$.  To see this, cover the original set $A_\alpha$ by a lattice of nonoverlapping cubes of sidelength $r$.  Then partition the cubes in this cover into $\mathcal{O}(1)$ subcollections such that the centers of the cubes in each subcollection are separated by a distance of at least $4^dC_0r$.  By the pigeonhole principle, the intersection of at least one subcollection in the partition with $A_\alpha$ must have measure comparable to $A_\alpha$.  We may thus replace $A_\alpha$ by its intersection with this subcollection of cubes.

Now let $\mathbf{1}_A$ denote the characteristic function of $A$, and $a_j = \mathbf{1}_{A_j} \psi_\lambda$ where $\psi_\lambda$ is defined as
\[
 \psi_\lambda(x) =
 \begin{cases}
 \frac{\left((Q_\lambda\circ \rho_\lambda)f\right)(x)}{|\left((Q_\lambda\circ \rho_\lambda)f\right)(x)|}, & \left((Q_\lambda\circ \rho_\lambda)f\right)(x) \neq 0,\\
 1, & \left((Q_\lambda\circ \rho_\lambda)f\right)(x) = 0.
 \end{cases}
\]
Since $\rho_\lambda$ is self-adjoint and $\|f\|_{L^2(M)}=1$,
\[
 \alpha |A_\alpha| \leq \left| \int \left((Q_\lambda\circ \rho_\lambda)f\right)\overline{\psi_\lambda \mathbf{1}_{A_\alpha}}\right| \leq \bigg(\int \big| \sum_j (\rho_\lambda \circ Q_\lambda^*)a_j\big|^2 \bigg)^{\frac 12}.
\]
This now yields (with $\rho_\lambda^2 = \rho_\lambda \circ \rho_\lambda $)
\begin{equation}\label{oneplustwo}
 \alpha^2 |A_\alpha|^2 \leq \sum_j \int \left| (\rho_\lambda \circ Q_\lambda^*)a_j\right|^2  + \sum_{j\neq k} \int (Q_\lambda \circ \rho_\lambda^2 \circ Q_\lambda^*) a_j \overline{a_k} =: I + II.
\end{equation}
We now consider the consequences of \eqref{oneplustwo} when $Q_\lambda$ is the identity and when this is a semiclassical PDO with $q_\lambda \in S_{1/8}$ separately.

\subsubsection{Consequences of \eqref{oneplustwo} when $Q_\lambda$ is the identity}\label{SSS:qlambdaidentity} We further review the arguments in \cite{sogge2015improved}, assuming $Q_\lambda$ is the identity.  The arguments in \cite{Berard}, \cite{HassellTacyNonpos} used to prove \eqref{loggaininfty} when $p=\infty$ also show that
\begin{equation*}
\|\rho_\lambda\|_{L^2(M) \to L^\infty(M)} \lesssim \lambda^{\frac{d-1}{2}}(\log\lambda)^{-1/2}.
\end{equation*}
In fact, this is a consequence of \eqref{berardbd} below and duality.  Hence $A_\alpha$ as defined in \eqref{Aalphadef} is vacuous unless $\alpha \lesssim \lambda^{\frac{d-1}{2}}(\log\lambda)^{-1/2}$, meaning we only need to consider cases where $r \gtrsim \lambda^{-1}$.

In \cite[(30)]{sogge2015improved}, it is shown that $\rho_\lambda$ satisfies local $L^2$ bounds over balls $B(x,r)$ when $\lambda^{-1} \lesssim r \leq \text{inj}(M)$
\begin{equation}\label{localizedL2}
\|\rho_\lambda \|_{L^2(M)\to L^2(B(x,r))} = \|\rho_\lambda \|_{L^2(B(x,r))\to L^2(M)}   \lesssim r^{\frac 12},
\end{equation}
where the implicit constant is independent of $x$ and the equality holds since $\rho_\lambda$ is self-adjoint.  Hence
\begin{equation}\label{Ibound}
 I \lesssim r \sum_j \int \left| a_j\right|^2 \lesssim r |A_\alpha| = \lambda \alpha^{-\frac{4}{d-1}}(\log \lambda)^{-\frac{2}{d-1}} |A_\alpha|.
\end{equation}

Moreover, with $K(w,z)$ denoting the integral kernel of $\rho_\lambda^2$
\begin{align}
 II &\lesssim \left( \sup_{j\neq k} \sup_{(w,z) \in A_{\alpha,j} \times A_{\alpha,k}} |K(w,z)| \right) \sum_{j\neq k} \|a_j\|_{L^1}\|a_k\|_{L^1} \label{IIboundprep}\\
&\lesssim \left( \sup_{j\neq k} \sup_{(w,z) \in A_{\alpha,j} \times A_{\alpha,k}} |K(w,z)| \right) |A_\alpha|^2.\notag
\end{align}
Lemma 3.3 in \cite{sogge2015improved} then appeals to results of B\'erard \cite{Berard} to observe that there exists $C=C(M,g)$ sufficiently large such that
\begin{equation}\label{berardbd}
 \left|K(w,z)\right| \leq \frac{C}{T}\left(\frac{\lambda}{\lambda^{-1}+\Theta_g(w,z)}\right)^{\frac{d-1}{2}} + C\lambda^{\frac{d-1}{2}}\exp(CT).
\end{equation}
Recalling \eqref{Tdef}, we then have that
\begin{align}
 II &\lesssim \left(C_0^{-\frac{d-1}{2}}(\log\lambda)^{-1}(\lambda r^{-1})^{\frac{d-1}{2}} + \lambda^{Cc_0 + \frac{d-1}{2}}\right)|A_\alpha|^2\notag\\
 &\lesssim  \left(C_0^{-\frac{d-1}{2}}\alpha^2 + \lambda^{Cc_0 + \frac{d-1}{2}}\right)|A_\alpha|^2 \label{IIbound}
\end{align}
Given \eqref{oneplustwo}, \eqref{Ibound}, and \eqref{IIbound} we then have desirable bounds on $|A_\alpha|$ when $\alpha \geq \lambda^{\frac{d-1}{4}+\veps}$ where $\veps$ can be made small by choosing $c_0$ much smaller and $C_0$ large.  However, the smaller we wish to take $\veps$, the smaller we must take $c_0$, which does have to be uniform in the proof.  In \cite{sogge2015improved}, this is remedied by taking $T=c_0\log\log\lambda$ and appealing to the results in \cite{BlairSoggeToponogov}, \cite{BlairSoggeRefinedHighDimension}, to handle smaller values of $\alpha$.  This in turn only yields a gain of a power of $(\log\log\lambda)^{-1}$ in the final estimates.  In the present work, we assume $c_0$ is small enough so that the argument outlined here yields 
\begin{equation}\label{eighththreshold}
 \alpha|A_\alpha|^{\frac{1}{p_c}} \lesssim \lambda^{\frac{1}{p_c}}(\log\lambda)^{-\frac{1}{d+1}} \qquad \text{for } \lambda^{\frac{d-1}{4}+\frac 18} \leq \alpha,
\end{equation}
so that the crucial matter is to treat the cases $\alpha < \lambda^{\frac{d-1}{4}+\frac 18}$.  The choice of $\veps=\frac 18$ is not crucial, but a convenient choice for the sake of concreteness as it does influence other parameters throughout the work.  We stress that in the remainder of this work, \eqref{eighththreshold} is only applied to the case $Q_\lambda = I$.

\subsubsection{Consequences of \eqref{oneplustwo} when $Q_\lambda$ is a semiclassical PDO} We now reconsider the bounds on $I$ and $II$ just established in \S\ref{SSS:qlambdaidentity} but with $Q_\lambda$ now a semiclassical PDO with symbol in $S_{1/8}$.  We would like for \eqref{localizedL2} to yield
\begin{equation}\label{Iboundsemi}
 I \lesssim r \sum_j \int \left| Q_\lambda^* a_j\right|^2 \lesssim r |A_\alpha| = \lambda \alpha^{-\frac{4}{d-1}}(\log \lambda)^{-\frac{2}{d-1}} |A_\alpha|.
\end{equation}
However, the kernel of $Q_\lambda^* $ is only rapidly decreasing outside a $\lambda^{-7/8}$ neighborhood of the diagonal and hence this estimate does not follow at scales finer than $r \leq \lambda^{-7/8}$.  But given \eqref{eighththreshold}, we will only need to bound $I$ when $\alpha < \lambda^{\frac{d-1}{4}+\frac 18}$, meaning that $r > \lambda^{-\frac{1}{2(d-1)}}(\log\lambda)^{-\frac{2}{d-1}}$, which always determines a much coarser scale of at least $r \gg \lambda^{-3/4}$.  Hence in these cases,
$| Q_\lambda^* a_j(x)| = \mathcal{O}(\lambda^{-N})$ for any $N$ outside a cube of sidelength $\approx r$, so the local estimates in \eqref{localizedL2} do indeed yield \eqref{Iboundsemi}.

Turning to the bounds on $II$ in \eqref{IIboundprep}, we now consider the effect of replacing the kernel $K(w,z)$ of $\rho_\lambda^2$ there by the kernel of $Q_{\lambda}\circ \rho_\lambda^2\circ Q_\lambda^*$ as indicated by \eqref{oneplustwo}.  In the next subsection, we will show that for suitable choices of $Q_\lambda$, the corresponding kernel $K(w,z)$ satisfies
\begin{equation*}
\left|K(w,z) \right| \lesssim \frac 1T\left(\frac{\lambda}{\lambda^{-1}+\Theta_g(w,z)}\right)^{\frac{d-1}{2}} + c(\lambda) \lambda^{\frac{d-1}{2}},
\end{equation*}
for some $c(\lambda) \searrow 0$ at least as fast as an inverse power of $\log\lambda$ but no faster than $(\log \lambda)^{-1}$ (so that \eqref{loweralpha} is ultimately respected in this argument). Hence \eqref{IIbound} can be improved to read
\begin{equation}\label{kernelconseq}
II \lesssim \left(C_0^{-\frac{d-1}{2}}\alpha^2 + \lambda^{\frac{d-1}{2}}c(\lambda)\right)|A_\alpha|^2 .
\end{equation}
Taking $C_0$ sufficiently large, we obtain an improvement on \eqref{eighththreshold}:
\begin{equation}\label{weakbdimproved}
\alpha|A_\alpha|^{\frac{1}{p_c}} \lesssim  \lambda^{\frac{1}{p_c}}(\log\lambda)^{-\frac{1}{d+1}} \qquad \text{for } \lambda^{\frac{d-1}{4}}c(\lambda)^{\frac 12} \lesssim \alpha \leq \lambda^{\frac{d-1}{4}+\frac 18}.
\end{equation}

\subsection{Improved weak estimates for microlocalized modes}\label{SS:berardmicro}
Consider any local coordinate chart $\Omega$ on $M$.  Suppose $q_{\lambda}(x,\xi)$ is a semiclassical symbol such that for some unit covector field $\omega(x)$, $|\omega(x)|_{g(x)} = 1$ (with $g(x)$ the ``cometric", the inner product on the $T^*M$ induced by the metric),
\begin{equation}\label{microcutoff}
\begin{gathered}
  \supp(q_{\lambda}) \subset \left\{(x,\xi) \in T^* \Omega: \left|\xi/|\xi|_{g(x)}-\omega(x) \right|_{g(x)} \lesssim \lambda^{-1/8}, |\xi| \approx 1\right\},\\
  \left|\langle \omega(x), d_\xi\rangle^j \prtl^\beta_{x,\xi}  q_{\lambda}(x,\xi)\right|\lesssim_{\beta, j} \lambda^{|\beta|/8}.
  \end{gathered}
\end{equation}
The symbol $q_\lambda$ thus lies in the subcritical class\footnote{Again, the choice of 1/8 is not crucial here, only a convenient one.}  $S_{1/8}$ (as in \cite[\S4.4]{ZworskiSemiclassicalAnalysis}).  If one sets $Q_{\lambda}:=\text{Op}(q_\lambda)$ as in \eqref{standardquant}, we show the following improvement on \eqref{berardbd} of the kernel of the composition $Q_{\lambda}\circ \rho_\lambda^2\circ Q_\lambda^*$:
\begin{theorem}\label{T:berardmicro}
Let $K(w,z)$ denote the kernel of $Q_{\lambda}\circ \rho_\lambda^2\circ Q_\lambda^*$.  We then have
\begin{equation}\label{berardmicro}
  \begin{gathered}\left|K(w,z) \right| \lesssim \frac 1T \left(\frac{\lambda}{\Theta_g(w,z)}\right)^{\frac{d-1}{2}} + c(\lambda) \lambda^{\frac{d-1}{2}},\\
  c(\lambda) =\begin{cases}
  (\log \lambda)^{-1/2}, &\mbox{if } d=2,\\
  (\log \lambda)^{-1}\log \log \lambda, & \mbox{if } d = 3, \\
  (\log \lambda)^{-1}, & \mbox{if } d\geq 4.
\end{cases}
\end{gathered}
\end{equation}
where the implicit constants can be taken independent of $\lambda$ and depend only on finitely many of the derivative bounds in \eqref{microcutoff}.
\end{theorem}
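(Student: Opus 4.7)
The plan is to express the kernel via a Fourier representation of $\rho^{2}(T(\lambda-P))$ using the compactly supported $\widehat{\rho^{2}}$, then split into short-time and long-time contributions to the half-wave propagator, exploiting the conic microlocalization in the long-time regime where one would otherwise only have Berard's estimate.

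Let $\eta=\rho^{2}$, so $\widehat\eta=\widehat\rho*\widehat\rho$ is supported in $\{|t|\le 1\}$. Fourier inversion and a rescaling yield
\[
\rho(T(\lambda-P))^{2}\;=\;\frac{1}{2\pi T}\int \widehat\eta(t/T)\,e^{it\lambda}\,e^{-itP}\,dt,
\]
so $K(w,z)$ is the analogous integral of the conjugated kernel $[Q_\lambda e^{-itP}Q_\lambda^{*}](w,z)$ against $T^{-1}\widehat\eta(t/T)$ over $|t|\le T$. Split via a smooth cutoff $\chi(t)$ supported in a fixed compact neighborhood of $0$, strictly smaller than the injectivity radius, into the short-time piece ($\chi$) and the long-time piece ($1-\chi$). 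For the short-time piece, a standard FIO parametrix for $e^{-itP}$ is available in local coordinates; composing with the semiclassical PDOs $Q_\lambda$, $Q_\lambda^{*}$ preserves the FIO class with amplitudes in $S_{1/8}$ and with canonical relation given by the geodesic flow, since the conic support in \eqref{microcutoff} is propagated by this flow. A stationary phase argument, exactly as in the standard derivation of the Berard/H\"ormander main term, then produces a contribution bounded by $T^{-1}(\lambda/\Theta_g(w,z))^{(d-1)/2}$, accounting for the first term in \eqref{berardmicro}; the $T^{-1}$ is just the Fourier rescaling.

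For the long-time piece, lift to the Cartan--Hadamard universal cover $(\tilde M,\tg)$ and decompose
\[
[e^{-itP}](w,z)\;=\;\sum_{\alpha\in\Gamma}\bigl[e^{-it\sqrtg}\bigr](\tilde w,\alpha\tilde z),
\]
where $\Gamma$ is the deck transformation group. For $|t|\le T=c_0\log\lambda$ with $c_0$ sufficiently small (the Ehrenfest regime), the Hadamard parametrix on $\tilde M$ applies to each summand and yields an oscillatory representation with phase $\Theta_{\tg}(\tilde w,\alpha\tilde z)-t$ and amplitude of size $\lambda^{(d-1)/2}$, up to Jacobian factors controlled by Rauch comparison. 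Conjugation by $Q_\lambda$ and $Q_\lambda^{*}$ restricts the effective sum to those $\alpha$ for which the initial and terminal unit tangent vectors of the $\tg$-geodesic from $\tilde z$ to $\alpha^{-1}\tilde w$ both lie in the $\lambda^{-1/8}$-cone around $\omega$. Stationary phase in $t$ further concentrates each contribution near $t\approx\Theta_{\tg}(\tilde w,\alpha\tilde z)$, giving a pointwise bound $\lesssim T^{-1}\lambda^{(d-1)/2}$ per surviving $\alpha$.

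The main obstacle, and what pins down $c(\lambda)$, is the counting of admissible $\alpha$. Nonpositive curvature gives a lower bound on angular separation at $\tilde w$ of the orbit points $\alpha\tilde z$ in an annulus of radius $R$, while the annular cardinality is at most $e^{hR}$ for the volume entropy $h$. Intersecting with the $\lambda^{-1/8}$-cone yields an annular count of order $\min\bigl(e^{hR},\,\lambda^{-(d-1)/8}e^{CR}\bigr)$ for an appropriate $C$. Summing over $R\le T$ and multiplying by $T^{-1}\lambda^{(d-1)/2}$ produces $c(\lambda)\lambda^{(d-1)/2}$; the dimensional split in \eqref{berardmicro} reflects the balance between the two regimes of this minimum. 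For $d\ge 4$ the angular gain dominates throughout $R\le T$ and one inherits the full $T^{-1}=(\log\lambda)^{-1}$ factor; for $d=3$ the crossover scale in $R$ contributes an extra $\log\log\lambda$; for $d=2$ the angular gain is weakest and one can only extract $(\log\lambda)^{-1/2}$, which is the familiar rate from the Berard/Hassell--Tacy $L^{\infty}$ estimate. Combining the two pieces completes the bound \eqref{berardmicro}.
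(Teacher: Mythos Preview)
Your overall architecture matches the paper's: Fourier inversion, short-time/long-time split, Hadamard parametrix on the universal cover, and a sum over the deck group $\Gamma$. But the heart of the proof---the treatment of the long-time $\Gamma$-sum---has a genuine gap.

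First, you omit the distance decay. Each term in the $\Gamma$-sum is not bounded by $T^{-1}\lambda^{(d-1)/2}$; stationary phase in the Hadamard parametrix gives
\[
|U_\alpha(\tilde w,\tilde z)|\lesssim T^{-1}\lambda^{(d-1)/2}\bigl(1+\Theta_{\tilde g}(\tilde w,\alpha\tilde z)\bigr)^{-(d-1)/2},
\]
and this $r^{-(d-1)/2}$ factor is essential for any summation to converge usefully.

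Second, and more seriously, your lattice count in the cone is unjustified. You assert an annular bound of the form $\min\bigl(e^{hR},\lambda^{-(d-1)/8}e^{CR}\bigr)$, but in nonpositive curvature a cone of fixed angle $\theta$ at $\tilde w$ has cross-sectional diameter $\sim\theta\sinh R$ at distance $R$, so there is no reason the orbit count in the cone should pick up a factor of $\theta^{d-1}$; the deck orbit need not be angularly equidistributed on this scale. The paper does \emph{not} count lattice points in a $\lambda^{-1/8}$-cone. Instead, it invokes the Toponogov triangle comparison (Lemma~\ref{L:Toponogov}): for $c_0$ small enough, the cone of aperture $\lambda^{-1/16}$ about $\tilde\gamma$ out to radius $T=c_0\log\lambda$ is contained in a tube $\mathcal T_R$ of \emph{fixed} radius $R$ about $\tilde\gamma$. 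One then splits $\Gamma$ into $\Gamma_{\mathcal T_R}$ and its complement. For $\alpha\notin\Gamma_{\mathcal T_R}$ the direction $d_{\tilde w}\Theta_{\tilde g}(\tilde w,\alpha\tilde z)$ lies outside the $\lambda^{-1/16}$-cone while $q_\lambda$ is supported in the $\lambda^{-1/8}$-cone, so integration by parts gives $|U_\alpha|\lesssim 1$, and the crude bound $\#\{\alpha\}\lesssim e^{CT}=\lambda^{Cc_0}$ suffices. For $\alpha\in\Gamma_{\mathcal T_R}$ the count at distance $\approx 2^k$ is only $O(2^k)$ (linear, because the tube has fixed width), and
\[
\sum_{\alpha\in\Gamma_{\mathcal T_R}}|U_\alpha|\lesssim \frac{\lambda^{(d-1)/2}}{T}\sum_{0\le k\lesssim\log T}2^{k}\cdot 2^{-k(d-1)/2}.
\]
It is \emph{this} geometric sum that produces the dimensional trichotomy in $c(\lambda)$: the summand is $2^{k(3-d)/2}$, which grows ($d=2$), is constant ($d=3$), or decays ($d\ge4$). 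Your explanation via ``crossover scale in $R$'' between two regimes does not match this mechanism.
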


\begin{corollary}\label{C:microlp}  Let $Q_\lambda,c(\lambda)$ be as in Theorem \ref{T:berardmicro}, $A_\alpha$ as in \eqref{Aalphadef}.  Then
  \begin{equation}\label{microlpweak}
  \alpha|A_\alpha|^{\frac{1}{p_c}} \lesssim \lambda^{\frac{1}{p_c}}c(\lambda)^{\frac{1}{d+1}}, \quad 0<\alpha \leq \lambda^{\frac{d-1}{4}+\frac 18}.
  \end{equation}
\end{corollary}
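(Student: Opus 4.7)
My plan is to derive the corollary as a two-case combination: a large-$\alpha$ regime handled by the weak bound \eqref{weakbdimproved} (which is the main payoff of Theorem \ref{T:berardmicro}) and a small-$\alpha$ regime handled by a trivial $L^2$-Chebyshev estimate.

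First I would observe that the error quantity $c(\lambda)$ from Theorem \ref{T:berardmicro} satisfies $c(\lambda)\gtrsim (\log\lambda)^{-1}$ in every dimension $d\geq 2$, so $(\log\lambda)^{-1/(d+1)}\lesssim c(\lambda)^{1/(d+1)}$. In particular, if $C\lambda^{(d-1)/4}c(\lambda)^{1/2}\leq \alpha\leq \lambda^{(d-1)/4+1/8}$ for a suitable constant $C$, then a fortiori $\alpha\gtrsim \lambda^{(d-1)/4}(\log\lambda)^{-1/2}$, so \eqref{loweralpha} holds. Hence \eqref{weakbdimproved}, which is the output of \eqref{oneplustwo}, \eqref{Iboundsemi}, and \eqref{kernelconseq} (the last being precisely where Theorem \ref{T:berardmicro} is used), gives
\[
\alpha|A_\alpha|^{1/p_c}\lesssim \lambda^{1/p_c}(\log\lambda)^{-1/(d+1)}\lesssim \lambda^{1/p_c}c(\lambda)^{1/(d+1)},
\]
which is \eqref{microlpweak} in this range.

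For the complementary regime $0<\alpha \leq C\lambda^{(d-1)/4}c(\lambda)^{1/2}$, the $TT^*$ argument underlying \eqref{weakbdimproved} is unavailable, since the choice of $r$ in \eqref{rchoice} would no longer be $\ll 1$. I would instead fall back on the trivial $L^2$ bound: since $q_\lambda\in S_{1/8}$ with $1/8<1/2$, Calder\'on--Vaillancourt gives $\|Q_\lambda\|_{L^2\to L^2}\lesssim 1$, and combined with \eqref{rhoLinfty} one has $\|(Q_\lambda\circ\rho_\lambda)f\|_{L^2}\lesssim 1$; Chebyshev then yields $|A_\alpha|\lesssim \alpha^{-2}$. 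Using the identities $1-2/p_c=2/(d+1)$ and $(d-1)/[2(d+1)]=1/p_c$, this gives
\[
\alpha|A_\alpha|^{1/p_c}\lesssim \alpha^{1-2/p_c}=\alpha^{2/(d+1)}\lesssim \bigl(\lambda^{(d-1)/4}c(\lambda)^{1/2}\bigr)^{2/(d+1)}=\lambda^{1/p_c}c(\lambda)^{1/(d+1)},
\]
as desired.

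The corollary is essentially bookkeeping: all the genuine analytic content is already packaged in Theorem \ref{T:berardmicro} and in the argument of \S\ref{SS:soggereview} leading to \eqref{weakbdimproved}. The only point that requires care is making the threshold $C\lambda^{(d-1)/4}c(\lambda)^{1/2}$ separating the two cases consistent with the implicit constants appearing in \eqref{weakbdimproved}, which is why I would insert the multiplicative constant $C$ above; choosing $C$ large enough ensures that the two regimes jointly cover all of $(0,\lambda^{(d-1)/4+1/8}]$ and that the Calder\'on--Vaillancourt bound passes through cleanly.
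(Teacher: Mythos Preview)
Your proposal is correct and follows essentially the same approach as the paper's own proof: split at the threshold $\alpha \approx \lambda^{(d-1)/4}c(\lambda)^{1/2}$, invoke \eqref{weakbdimproved} above it, and use the $L^2$ bound $\|Q_\lambda\circ\rho_\lambda\|_{L^2\to L^2}\lesssim 1$ with Chebyshev below it. The paper's version is slightly terser but the logic and arithmetic are identical; your added remarks on $c(\lambda)\gtrsim(\log\lambda)^{-1}$ and the verification of \eqref{loweralpha} simply make explicit what the paper leaves implicit.
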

\begin{proof}[Proof of Corollary \ref{C:microlp}]
Given \eqref{weakbdimproved}, it suffices to assume $\alpha \lesssim \lambda^{\frac{d-1}{4}}c(\lambda)^{\frac 12}.$  But since $\|Q_\lambda \circ \rho_\lambda\|_{L^2(M) \to L^2(M)} \lesssim 1$ uniformly, we have $\alpha |A_\alpha|^{\frac 12} \lesssim 1$, hence
\begin{equation*}
\alpha|A_\alpha|^{\frac{1}{p_c}} = \alpha^{1-\frac{2}{p_c}}\left( \alpha |A_\alpha|^{\frac 12}\right)^{\frac{2}{p_c}} \lesssim \lambda^{\frac{1}{p_c}}c(\lambda)^{\frac{1}{d+1}},
\end{equation*}
by the upper bound on $\alpha$.
\end{proof}
\subsubsection{Consequences of the Hadamard parametrix and the proof of Theorem \ref{T:berardmicro}}
Since $\hat{\rho^2} = \hat{\rho} * \hat{\rho}$ is supported in $[-1,1]$, the key to \eqref{berardmicro} is to bound the following integral by the \emph{second term} on the right hand side of \eqref{berardmicro}:
\begin{equation}\label{integratedwave}
 \frac{1}{2\pi T} \int_{-T}^T (1-\beta)(t) \hat{\rho^2}(t/T) e^{i\lambda t} \left(Q_\lambda \circ \cos(t P)\circ Q_\lambda^*\right)(w, z)\,dt.
\end{equation}
where $\beta$ is of sufficiently small compact support and identically one in a neighborhood of 0.  Indeed, without the factor of $1-\beta$ in the integrand, this is the kernel of $Q_{\lambda}\circ \rho_\lambda^2\circ Q_\lambda^*$, up to negligible errors, by Euler's formula.  It is a classical result of H\"ormander \cite{HormanderSpectralFunction} that  if one replaces $1-\beta$ by $\beta$ here, the resulting kernel is bounded by the first term on the right in \eqref{berardmicro}.

Since $(M,g)$ does not have conjugate points, the kernel of $\cos(t P)$ can be analyzed by lifting to the universal cover $(\tilde M, \tilde g)$ where $\tilde g$ is defined by pulling the metric tensor $g$ back via the covering map.  Fix a fundamental domain $D \subset \tilde{M}$ and let $\tilde{w}$, $\tilde z$ denote the unique points in $D$ which project onto $w,z$ in $M$ via the covering map.  Recall that the classical Cartan-Hadamard theorem ensures that $\tilde M$ is diffeomorphic to $\RR^d$ via the exponential map at any point.  Here we take global geodesic coordinates on $\tilde{M}$ via the exponential map at $\tilde w$.  We also assume that the geodesic in $\tilde{M}$ from $\tilde{w}$ with initial covector $\omega(\tilde{w})$ lies along the first coordinate axis and let $\tilde\gamma(t) = (t,0,\dots,0)$ denote this unit speed geodesic.

If $\tilde{P} = \sqrt{-\Delta_{\tilde g}}$, with $\Delta_{\tilde g}$ the Laplacian on $(\tilde M, \tilde g)$, we have
\begin{equation*}
   \cos (t P)(w,z) = \sum_{\alpha \in \Gamma}\cos (t\tilde P)(\tilde w,\alpha(\tilde z))
\end{equation*}
where $\Gamma$ denotes the group of deck transformations which preserve the covering map\footnote{The proof of Theorem \ref{T:berardmicro} is more or less independent of the other sections, so we temporarily reassign $\alpha$ as indexing $\Gamma$ in the interest of consistency with prior works.}.  Note that by finite speed of propagation, we may restrict attention to the $\alpha\in B(\tilde w,T)$.  For $\tilde x\in D$ and $ \tilde y\in \RR^d$, we first concern ourselves with
\begin{equation*}
V(\tilde x, \tilde y)  :=\frac{1}{2\pi T} \int_{-T}^T (1-\beta)(t) \hat{\rho^2}(t/T) e^{i\lambda t}\cos(t \tilde P)\left(\tilde x, \tilde y\right)dt.
\end{equation*}
If we extend the kernel of $Q_\lambda^*$ to be periodic with respect to $\alpha\in\Gamma$, we have (with $d\tilde x$, $d\tilde y$ implicitly the Riemannian measure with respect to $\tilde g$)
\begin{equation}\label{alphasum}
\begin{gathered}
\eqref{integratedwave} = \sum_{\alpha\in \Gamma} U_\alpha(\tilde w, \tilde z),\\
  U_\alpha(\tilde w, \tilde z) := \int_{\alpha(D)} \int_{D} Q_\lambda (\tilde w, \tilde x) V(\tilde x, \tilde y)   Q_\lambda^* (\tilde y, \alpha^{-1}(\tilde z))d\tilde x d \tilde y.
\end{gathered}
\end{equation}

Using the Hadamard parametrix for the wave equation on $(\RR^d,\tilde g)$ and stationary phase (see for example, \cite[Lemma 5.1]{BlairSoggeKaknik}, \cite[\S3]{BlairSoggeToponogov}, \cite[Lemma 3.1]{ChenSogge}), it is known that
\begin{equation}\label{mainasymp}
 V(\tilde x,\tilde y):= \frac{\lambda^{\frac{d-1}{2}}}{T\Theta_{\tilde{g}}(\tilde x,\tilde y)^{\frac{d-1}{2}}} \sum_{\pm}
  e^{\pm i\lambda \Theta_{\tilde g} (\tilde x,\tilde y)}
  a_{\lambda,\pm} (\tilde x,\tilde y)+ R_\lambda(\tilde x, \tilde y).
\end{equation}
Here $a_{\lambda,\pm}, R_\lambda$ vanish for $\Theta_{\tilde g} (\tilde x, \tilde y)\geq T$ by finite speed of propagation and $a_{\lambda,\pm}$ also vanishes if $\Theta_{\tilde g} (\tilde x, \tilde y)$ is sufficiently small since $\beta$ vanishes in a neighborhood of the origin. The remainder can be taken so that $|R_\lambda(\tilde x, \tilde y)| \lesssim \lambda^{-2}$.  Moreover, $a_{\lambda, \pm}$ can be written as
\begin{equation}\label{coeff}
  a_{\lambda,\pm} (\tilde x,\tilde y) = \vartheta(\tilde x, \tilde y)a_{\lambda,\pm,1} \big(\Theta_{\tilde g} (\tilde x,\tilde y)\big)
  + a_{\lambda,\pm,2} (\tilde x,\tilde y),
\end{equation}
where $|\prtl_r^j a_{\lambda,\pm,1}(r)| \lesssim_j r^{-j}$ and there exists $C_d$ so that for $0<|\beta|<16d$,
\begin{equation}\label{asympreg}
|\prtl^\beta_{\tilde x,\tilde y} \Theta_{\tilde{g}}(\tilde x, \tilde y)|, \,\lambda^2|\prtl^\beta_{\tilde x,\tilde y} a_{\lambda,\pm,2} (\tilde x,\tilde y)|, \,|\prtl^\beta_{\tilde x,\tilde y} \vartheta(\tilde x,\tilde y)|  \lesssim \exp(C_d\Theta_{\tilde{g}}(\tilde x, \tilde y)).
\end{equation}
The function $\vartheta(\tilde x ,\tilde y )$ is the leading coefficient in the Hadamard parametrix.  It is characterized by the property that $dV_g = \vartheta^{-2}(\tilde x,\tilde y) d\mathcal{L}$ in normal coordinates at $\tilde x$, with $\mathcal{L}$ denoting Lebesgue measure on $\RR^d$.  Since $(\tilde M, \tilde g)$ has nonpositive sectional curvatures, it is observed in \cite{SoggeZelditchL4} that $\vartheta$ is uniformly bounded as a consequence of the G\"unther comparison theorem.  Moreover, if the curvatures are strictly negative and bounded above by $-\kappa^2$, the same theorem implies $ \vartheta(\tilde{x},\tilde{y}) \lesssim \exp(-\frac{\kappa(d-1)}{2} \Theta_{\tilde g}(\tilde x, \tilde y)) $.

Given the properties of the support of $a_{\pm,\lambda}$ and $R_\lambda$, there are at most $\mathcal{O}(e^{CT})$ nonzero terms in the sum \eqref{alphasum} as a consequence of lattice point counting arguments.  As observed above, $|R_\lambda(\tilde x, \tilde y)| \lesssim \lambda^{-2}$ and hence by Sobolev embedding and $L^2$ bounds on $Q_\lambda$, we may restrict attention to the sum over $\pm $ in \eqref{mainasymp}.

We next observe that in our global coordinate system, we may assume that up to acceptable $\mathcal{O}(\lambda^{-2})$ error, the kernel of $Q_\lambda$ is of the form
\begin{equation*}
\begin{gathered}
\frac{\lambda^{d}}{(2\pi)^{d}} \int e^{i\lambda(\tilde w -\tilde x)\cdot \eta} q_\lambda(\tilde w, \tilde{x}, \eta)\,d\eta,\\
 \supp(q_{\lambda}) \subset \left\{(\tilde w,\eta) \in T^* D, \tilde x \in D: \left|\eta/|\eta|-(1,0,\dots,0)\right| \lesssim \lambda^{-\frac 18}, |\eta| \approx 1\right\}.
\end{gathered}
\end{equation*}
Here we have used a compound symbol, deviating slightly from \eqref{standardquant} to ensure the kernel is supported in $D\times D$.  We may assume the same for the support of the symbol $q_\lambda^*$ of the adjoint. Restricting attention to the main term in \eqref{mainasymp}, $U_\alpha(\tilde w, \tilde z)$ is a sum over $\pm$
\begin{equation}\label{composedkernels}
\begin{gathered}
  \frac{\lambda^{\frac{5d-1}{2}}}{(2\pi)^{2d}T}\int e^{i\lambda\varphi_{\pm}(\tilde w, \tilde x, \tilde y, \tilde z, \eta, \zeta)}
  q_\lambda(\tilde w, \tilde x, \eta) a_{\pm,\lambda}(\tilde x, \tilde y)q_\lambda^*(\tilde y, \alpha^{-1}(\tilde z), \eta)\,d\tilde x d\tilde y d\eta d\zeta,\\
  \varphi_{\pm}(\tilde w, \tilde x, \tilde y, \tilde z, \eta, \zeta):=(\tilde w -\tilde x)\cdot \eta \pm \Theta_{\tilde g}(\tilde x, \tilde y) + (\tilde y -\alpha^{-1}(\tilde z))\cdot \zeta ,
  \end{gathered}
\end{equation}
where as before the domain of integration is $(\tilde x, \tilde y) \in D \times \alpha(D)$.

Applying stationary phase to \eqref{composedkernels} shows that for any $\alpha \in B(w,T)$,
\begin{equation}\label{Ualphabd}
|U_\alpha(\tilde w, \tilde z)| \lesssim \frac{\lambda^{\frac{d-1}{2}}}{T}\left(\vartheta(\tilde w, \alpha(\tilde z)) +\lambda^{-2}\right)\Big(1+\Theta_g(\tilde w, \alpha(\tilde z))\Big)^{-\frac{d-1}{2}}.
\end{equation}
The main idea in the proof of \eqref{berardmicro} is that one can improve upon this bound when $\alpha(D)$ is outside a tubular neighborhood of $\tilde \gamma$.  The proof is similar to that in \cite{BlairSoggeToponogov} where the authors made use of the following consequence of the Toponogov triangle comparison theorem (see \cite[Proposition 2.1]{BlairSoggeToponogov} for further details).
\begin{lemma}\label{L:Toponogov}
Suppose $(\RR^d,\tilde g)$ is the cover of $(M,g)$ given by the exponential map at $w$ and that its sectional curvatures are bounded below by $-1$.  Given $T\gg 1$ and $\theta \ll 1$, let $C(\theta;T)$ denote the set of points in the metric ball of radius $T$ about $w$ such that the geodesic through the point and $w$ forms an angle less than $\theta$ with $\tilde \gamma$.  Fix $R$ sufficiently large. Then if
\begin{equation*}
  \tuber := \{\tilde x \in \RR^d: \Theta_{\tilde{g}} (\tilde x, \tilde \gamma) \leq R  \},
\end{equation*}
we have $C(\theta_T;T) \subset \tuber$ if $\sin(\frac{\theta_T}{2}) = \frac{\sinh(R/2)}{\sinh T}$.
\end{lemma}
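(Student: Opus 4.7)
The plan is to apply the hinge (SAS) form of Toponogov's comparison theorem with the real hyperbolic plane $\mathbb{H}^2$ of constant sectional curvature $-1$ as the model space, thereby reducing the claimed inclusion $C(\theta_T;T) \subset \tuber$ to a short computation in hyperbolic trigonometry.

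Given any $\tilde x \in C(\theta_T;T)$, I would first set $s := \Theta_{\tilde g}(w,\tilde x) \leq T$ and let $\alpha < \theta_T$ denote the angle at $w$ between the geodesic segment from $w$ to $\tilde x$ and the ray of $\tilde\gamma$ on the corresponding side of $w$. Then I would form the geodesic triangle in $(\RR^d,\tilde g)$ with vertices $w$, $\tilde x$, and $\tilde\gamma(s)$; by construction its two sides emanating from $w$ both have length $s$ and the included angle is exactly $\alpha$. Since the sectional curvatures of $\tilde g$ are bounded below by $-1$, the hinge version of Toponogov bounds the remaining side by its $\mathbb{H}^2$-counterpart $c_h$:
\begin{equation*}
\Theta_{\tilde g}(\tilde x,\tilde\gamma(s)) \leq c_h.
\end{equation*}

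The remaining step is a direct computation of $c_h$. Applying the hyperbolic law of cosines to the isoceles comparison triangle and simplifying via $\cosh^2-\sinh^2=1$ together with the half-angle identity $1-\cos\alpha = 2\sin^2(\alpha/2)$ collapses the expression to
\begin{equation*}
\sinh(c_h/2) = \sinh(s)\sin(\alpha/2) \leq \sinh(T)\sin(\theta_T/2) = \sinh(R/2),
\end{equation*}
where the last equality is the defining relation for $\theta_T$. Monotonicity of $\sinh$ forces $c_h \leq R$, hence $\Theta_{\tilde g}(\tilde x,\tilde\gamma) \leq \Theta_{\tilde g}(\tilde x,\tilde\gamma(s)) \leq R$ and thus $\tilde x \in \tuber$.

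The one conceptual point meriting care is the direction in which Toponogov is invoked: a \emph{lower} sectional curvature bound $\geq -1$ produces an \emph{upper} bound on the third side of a hinge in terms of the $\mathbb{H}^2$ comparison triangle, which is exactly the inequality we need. The ambient Cartan--Hadamard structure on $(\RR^d,\tilde g)$ inherited from the nonpositive curvature hypothesis on $(M,g)$ guarantees that geodesic segments between any two points are unique and no cut-locus obstructions arise, so hinge comparison applies globally without any diameter restriction on $T$. Beyond this trigonometric identification, I do not anticipate any substantive obstacle.
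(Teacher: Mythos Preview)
Your proof is correct and follows exactly the approach indicated by the paper, which states the lemma as a consequence of the Toponogov triangle comparison theorem and refers to \cite[Proposition 2.1]{BlairSoggeToponogov} for details. Your hinge comparison with the hyperbolic model, together with the isoceles half-angle identity $\sinh(c_h/2)=\sinh(s)\sin(\alpha/2)$, is precisely the computation carried out there, and your remark about choosing the ray of $\tilde\gamma$ on the appropriate side of $w$ correctly handles the only ambiguity.
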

Note that we may assume the sectional curvatures of $(M,g)$ and $(\tilde M,\tilde g)$ are bounded below by $-1$ by rescaling the metric in the outset of the proof.

Fix $R= 100\cdot\text{diam}(D)$. Given the lemma, we take $c_0$ in \eqref{Tdef} so that
\begin{gather}
C(\lambda^{-1/16};c_0\log \lambda) = C(\lambda^{-1/16};T)  \subset \tuber, \text{ and hence}\notag\\
  \left|\pm d_{\tilde w}\Theta_{\tilde g}(\tilde w, \tilde y)-(1,0,\dots,0)\right| \gtrsim \lambda^{-\frac{1}{16}}, \qquad \tilde y \notin \tuber.\label{smallcone}
\end{gather}

\subsubsection{Proof of Theorem \ref{T:berardmicro}} As in \cite{BlairSoggeToponogov}, set
\begin{equation*}
  \Gamma_{\tuber} := \{ \alpha \in \Gamma: \alpha(D) \cap \tuber \neq \emptyset \}.
\end{equation*}
The arguments on p. 202 in that work then show that the cardinality of $\{\alpha \in \Gamma_{\tuber}: \Theta_{\tilde g}(\tilde w,\alpha(\tilde z)) \in [2^k,2^{k+1}] \}$ is $\mathcal{O}(2^k)$.  Therefore given \eqref{Ualphabd},
\begin{equation}\label{stabsum}
\sum_{\alpha \in \Gamma_{\tuber}}|U_{\alpha}(\tilde w, \tilde z)|\lesssim \frac{\lambda^{\frac{d-1}{2}}}{T} \sum_{0 \leq k \lesssim \log_2\lambda} 2^{k}2^{-k\frac{d-1}{2}} \lesssim  c(\lambda)\lambda^{\frac{d-1}{2}}.
\end{equation}
Indeed, so geometric summation shows the inequality.

We are now left to show that
\begin{equation}\label{mainmicro}
  \left| U_\alpha(\tilde w, \tilde z) \right| \lesssim 1, \qquad \text{for }\alpha \notin \Gamma_{\tuber}.
\end{equation}
Indeed, if this holds, then given \eqref{Tdef} we have for some uniform constant $C$,
\begin{equation*}
\sum_{\alpha \notin \Gamma_{\tuber}} \left| U_\alpha(\tilde w, \tilde z) \right| \lesssim e^{CT} \lesssim \lambda^{Cc_0} \lesssim c(\lambda)\lambda^{\frac{d-1}{2}},
\end{equation*}
since we take $c_0$ sufficiently small.

Next observe that with $\varphi_\pm$ as in \eqref{composedkernels}
\begin{equation*}
\begin{gathered}
d_{\tilde x} \varphi_\pm =  \pm  d_{\tilde x} \Theta_{\tilde{g}}(\tilde x, \tilde y)  - \eta,\\
d_\eta \varphi_\pm  = \tilde w - \tilde x, \quad d_\zeta \varphi_\pm = \tilde y - \alpha^{-1}(\tilde z).
\end{gathered}
\end{equation*}
Now recall \eqref{asympreg} and the constant $C_d$ there.  If we take $c_0$ small so that $\lambda^{C_dc_0} \ll \lambda^{1/16}$, integration by parts in \eqref{composedkernels} yields
\begin{multline*}
 |U_\alpha(\tilde w, \tilde z)| \lesssim \\
 \sup_{\tilde x, \tilde y, \eta,\pm } \lambda^{\frac{5d-1}{2}} \Big(1+\lambda^{\frac 78} \left| \pm d_{\tilde x} \Theta_{\tilde{g}}(\tilde x, \tilde y) - \eta\right|+ \lambda^{\frac 78}|\tilde y - \alpha^{-1}(\tilde z)| + \lambda^{\frac 78}|\tilde w - \tilde x| \Big)^{-8d},
\end{multline*}
where the supremum is over all points inside the support of the amplitude.  However, there exists $C$ such that
\begin{equation*}
\left| d_{\tilde x} \Theta_{\tilde{g}}(\tilde x, \tilde y)  - d_{\tilde w}\Theta_{\tilde{g}}(\tilde w, \alpha^{-1}(\tilde z)) \right|
\lesssim e^{CT}\left(|\tilde w-\tilde x|+ |\tilde y-\alpha^{-1}(\tilde z)| \right),
\end{equation*}
so taking $c_0 < \frac{1}{16C}$ in \eqref{Tdef}, the constant on the right is $\lambda^{Cc_0}\ll \lambda^{1/16}$, hence
\begin{equation*}
  |U_\alpha(\tilde w, \tilde z)| \lesssim
 \sup_{\eta,\pm} \lambda^{\frac{5d-1}{2}} \left(1+\lambda^{\frac 34} |  \pm d_{\tilde w} \Theta_{\tilde{g}}(\tilde w, \alpha^{-1}(\tilde z)) - \eta| \right)^{-8d},
\end{equation*}
But since $|\eta -(1,0,\dots,0)| \lesssim \lambda^{-1/8}$, and $\alpha \notin \Gamma_{\tuber}$,  as a consequence of \eqref{smallcone} the second factor is $\mathcal{O}(\lambda^{-3d})$ which is stronger than \eqref{mainmicro}.

\begin{remark}\label{R:negcurvature}
  When the curvatures of $(M,g)$ are strictly negative, one can take $c(\lambda) = (\log \lambda)^{-1}$ in Theorem \ref{T:berardmicro} and its corollary in any dimension, leading to an improvement in the exponent $\veps_0$ in Remark \ref{R:optimalexponentintro} when $d=2,3$ via the argument in \S\ref{S:proofbycontra}.  As observed above, $\vartheta$ decays exponentially in $\Theta_{\tilde{g}}$ in this case, and hence the sum in \eqref{stabsum} is $\mathcal{O}(\lambda^{\frac{d-1}{2}}/\log\lambda)$ for any $d \geq 2$.
\end{remark}

\section{The proof by contradiction}\label{S:proofbycontra}
To obtain a contradiction to Theorem \ref{T:mainthm}, suppose there exists a sequence of triples $\{(f_k,\lambda_k,B_k)\}_{k=1}^\infty$ such that $\|f_k\|_{L^2(M)} =1$, $B_k,\lambda_k \to \infty$ such that
\begin{equation}\label{firstwksaturation}
0<\frac{B_k\lambda_k^{1/p_c}}{(\log\lambda_k)^{\veps_1}}    < \| \rho_k f_k\|_{L^{p_c,\infty}(M)}, \qquad \veps_1 = \frac{\veps_0p_c}{p_c-2} = \frac{d+1}{2}\veps_0,
\end{equation}
where $\veps_0$ is in our main $L^{p_c}$ estimate in Theorem \ref{T:mainthm} (cf. Remark \ref{R:optimalexponentintro}) and as before, $\rho_k = \rho_{\lambda_k}$. Indeed, if we had
\begin{equation*}
\limsup_{\lambda\to\infty}\lambda^{-1/p_c}(\log\lambda)^{\veps_0}\|\rho_\lambda\|_{L^2(M)\to L^{p_c}(M)} = \infty,
\end{equation*}
then a similar inequality holds with different values of $B_k \to \infty$, a strong $L^p$ bound replacing this weak one, and the larger log-exponent $\veps_1$ replaced by $\veps_0$.  But then the Lorentz interpolation argument \eqref{lorentzinterp} yields \eqref{firstwksaturation}.

Taking $\veps_0$ small enough so that $\veps_1 \leq \frac{1}{d+1}$, given the consequence \eqref{eighththreshold} of the results in \cite{sogge2015improved}, we may assume for each $k$, there is $\alpha_k >0$ such that
\begin{equation}\label{initwkblowup}
\frac{B_k\lambda_k^{1/p_c}}{(\log\lambda_k)^{\veps_1}}   < \alpha_k\left|\left\{x\in M: |\rho_k f(x)| >\alpha_k \right\}\right|^{\frac{1}{p_c}}, \qquad
\alpha_k \leq \lambda_k^{\frac{d-1}{4}+\frac 18}.
\end{equation}

In order to take advantage of the improved microlocalized bounds in Theorem \ref{T:berardmicro} and Corollary \ref{C:microlp}, we will appeal to methods emanating from the Fourier restriction problem and their relatives.  In particular, we want to control the $L^{p_c,\infty}$ quasi-norm of the $\rho_k f$ by the $L^{p_c}$ and $L^2$ norm of expressions such as $Q_\lambda \rho_k f$ with $Q_\lambda$ being the pseudodifferential cutoff function as in Theorem \ref{T:berardmicro} (though the notation will change slightly below).  While the operator $\rho_\lambda$ is still too poorly understood to apply such classical methods, we can instead use local operators $\sigma_k=\rho(\tilde{c}_0(\lambda_k-P))$ (as in the notation section) in order to achieve this.  This is in the same spirit of the authors' previous work, and that of others, where the local operators are treated in a way that make them amenable to global analysis.

The main idea is that $(I-\sigma_k)\circ \rho_k$ is an acceptable error term.  Indeed, since $(1-\rho)(0)=0$, we have
\[
|(1-\rho)(\tilde{c}_0(\lambda-\tau))\rho(T(\lambda-\tau))| \lesssim T^{-1}(1+T|\lambda-\tau|)^{-N},
\]
and hence the classical $L^2\to L^{p_c}$ bounds \eqref{sogge88bd} for spectral projectors $\mathbf{1}_{[l,l+1]}(P)$ imply
\begin{equation}\label{sigmareplace}
\|(I-\sigma_k)\circ \rho_k f_k\|_{L^{p_c}(M)} \lesssim  \lambda^{1/p_c}(\log\lambda)^{-1}.
\end{equation}
Since $\veps_1<1$, we may assume that \eqref{initwkblowup} holds with $\sigma_k\rho_k f_k$ replacing $\rho_k f_k$ (the former abbreviating $(\sigma_k\circ \rho_k) f_k$).

Now take a finite partition of unity subordinate to an open cover of a suitable family of coordinate domains.  By the pigeonhole principle, we may assume that at the cost of shrinking the $B_k$ and $\alpha_k$ by a uniform factor and passing to a subsequence of the triples indexed by $k$ there is a bump function $\psi$ supported in a coordinate chart $\Omega\subset\RR^d$ centered at the origin for which
\begin{equation*}
\frac{B_k\lambda_k^{1/p_c}}{(\log\lambda_k)^{\veps_1}}   < \alpha_k \left|\left\{x\in \Omega: |\psi(x)(\sigma_k\rho_k f_k)(x)| >\alpha_k \right\}\right|^{\frac{1}{p_c}}.
\end{equation*}
After another harmless shrinking of $B_k, \alpha_k$, we may also assume that the measure here is the usual Lebesgue measure in coordinates instead of Riemannian measure.  By a second application of the pigeonhole principle, we may assume that there exists a Fourier multiplier $m\in S_{1,0}^0$ truncating to a conic sector of small aperture about a fixed vector such that
\begin{equation}\label{wksaturation}
\frac{B_k\lambda_k^{1/p_c}}{(\log\lambda_k)^{\veps_1}}   <  \alpha_k\left|\left\{x\in \Omega: |(m(D)\psi\sigma_k\rho_k f_k)(x)| >\alpha_k \right\}\right|^{\frac{1}{p_c}}.
\end{equation}
After a possible rotation of coordinates, we may further assume that the fixed vector is $(1,0,\dots,0)$, that is,
\begin{equation*}
 \supp(m) \subset \left\{\xi:\left|\xi/|\xi|-(1,0,\dots,0)\right| \ll 1\right\}.
\end{equation*}

\subsection{Analysis of $\sigma_\lambda$}\label{SS:sigmaanalysis}
We may assume that in the coordinate chart $\Omega$, $g^{ij}(0) = \delta^{ij}$ and that for some $\epsilon>0$ sufficiently small
\begin{equation}\label{epsilonchart}
\Omega = [-\epsilon,\epsilon]^d
\end{equation}

We now recall the method for computing the kernel of $\sigma_\lambda =\rho(\tilde{c}_0(\lambda -P))$ from \cite[Ch. 5]{soggefica}. There it is observed that $\sigma_\lambda$ can be realized as an operator valued integral involving the wave kernel $e^{-itP}$
\[
\sigma_\lambda  = \frac{1}{2\pi \tilde{c}_0}\int_{-\tilde{c}_0}^{\tilde{c}_0} e^{it\lambda}e^{-itP}\widehat{\rho}(t/\tilde{c}_0)\,dt.
\]
Using a Lax parametrix, it is well known that for $|t| \leq \tilde{c}_0$ there exists a phase function $\varphi(t,x,\xi)$ and an amplitude $v(t,x,\xi)$ such that the Schwartz kernel of $m(D)\psi e^{-itP}$ is given by an oscillatory integral
\[
\left(m(D)\psi e^{-itP}\right)(x,y) = 2\pi\tilde{c}_0 \int e^{i(\varphi(t,x,\xi)-y\cdot\xi)} v(t,x,\xi)\tilde{\psi}(y)\,d\xi + \text{error}
\]
where the error is smoothing to a sufficient order and hence can be neglected in what follows.  Here $\tilde{\psi}$ is a bump function of slightly larger support and we may assume $v(t,\cdot,\xi)$, $\tilde{\psi}$ are supported in the same coordinate chart $\Omega$ as above.  Moreover, we may take
\begin{equation*}
  \supp(v(t,x,\cdot)) \subset \{\xi:|\xi/|\xi|-(1,0,\dots,0)| \ll 1\},
\end{equation*}
for some conic sector of slightly larger aperture than the one containing $\supp(m)$ (cf. \eqref{wksaturation}).  Up to negligible error, the kernel of $m(D)\psi\sigma_\lambda$ is
\[
\int_{-\tilde{c}_0}^{\tilde{c}_0} \int e^{i(\lambda t + \varphi(t,x,\xi)-y\cdot\xi)} \widehat{\rho}(t/\tilde{c}_0)v(t,x,\xi)\,d\xi\,dt\cdot\tilde{\psi}(y)
\]
An integration by parts in $t$ shows that the contribution of the region where $|\xi|\ll \lambda$ or $|\xi|\gg \lambda$ to this integral is $\mathcal{O}(\lambda^{-N})$ for any $N$ and hence negligible.  Hence we may assume that $v(t,x,\cdot)$ is further supported where $|\xi|\approx \lambda$.  Rescaling $\xi \mapsto \lambda \xi$, we are reduced to considering a semiclassical Fourier integral operator $\tilde{\sigma}_\lambda$ given by integration against the kernel
\begin{equation}\label{localkernel}
\tilde{\sigma}_\lambda(x,y) := \lambda^d\int_{-\tilde{c}_0}^{\tilde{c}_0} \int e^{i\lambda(t+\varphi(t,x,\xi)-y\cdot\xi)} \widehat{\rho}(t/\tilde{c}_0)v(t,x,\xi)\,d\xi\,dt\cdot\tilde{\psi}(y)
\end{equation}
where now $v(t,x,\cdot)$ is supported where $|\xi|\approx 1$ and in the same conic region as before.  Therefore in what follows, we may assume that any function on which $\tilde{\sigma}_\lambda$ operates has its semiclassical Fourier transform supported in this region.  Note that the operator $\tilde\sigma_\lambda$ is $m(D)\psi\sigma_\lambda$ up to negligible error.

We pause to remark that \cite[Lemma 5.1.3]{soggefica} uses stationary phase on \eqref{localkernel} to show that $\tilde\sigma_\lambda$ is an oscillatory integral operator with Carleson-Sj\"olin phase (see also the $2^j\approx 1$ case of Lemma \ref{L:pdofiocomp} below).  As observed there, the $L^p$ theory for such operators due to H\"ormander and Stein then yield the following linear estimates on $\tilde\sigma_\lambda$, which in turn imply \eqref{sogge88bd}:
\begin{equation}\label{linearestimates}
\|\tilde\sigma_\lambda\|_{L^2 \to L^{p_c}} \lesssim \lambda^{\frac{1}{p_c}}.
\end{equation}

We now want to decompose the identity into a family of pseudodifferential operators which have the effect of localizing a function in phase space in a fashion similar to Fourier multipliers defined by the characteristic functions in \eqref{flambdatheta}.   However, this requires care as the operators must in some sense be invariant under the geodesic flow.  We achieve this by fixing a hyperplane, namely the $x_1 =0$ plane, then localizing the momenta so that it is within a $\lambda^{-1/8}$ neighborhood of a fixed vector as it passes through this hyperplane.  In the construction, it is convenient to use the trivialization $T^*\Omega \cong \Omega \times \RR^d$ to define a family of constant covector fields along the hyperplane which serve as the centers of these neighborhoods (constant in the sense that their expression in the coordinate frame is independent of position).


\subsubsection{Analysis of the geodesic flow} In preparation for the decomposition, we study $\chi_t$, which we denote as the flow on $T^*\Omega$ generated by the Hamiltonian vector field of $p(x,\xi) = |\xi|_{g(x)}$.  Hence $\chi_t(x,\xi)$ is the time $t$ value of the integral curve of the Hamiltonian vector field of $p$ with initial data $(x,\xi)$. Recall that the phase function $\varphi$ in the construction above satisfies
\begin{equation}\label{generatingfcn}
 \chi_t(d_\xi \varphi(t,x,\xi), \xi) = (x,d_x\varphi(t,x,\xi)).
\end{equation}
For initial data in the cosphere bundle $S^*\Omega$ defined by
\begin{equation*}
S^*\Omega := \{(x,\xi) \in T^*\Omega: |\xi|_{g(x)}=1\},
\end{equation*}
the integral curves of $p$ coincide with geodesics of $(M,g)$ as curves in the cotangent bundle.  We write $x=(x_1,x')$ so that in particular $(0,x')$ gives coordinates on the $x_1=0$ hyperplane.  Consider the restriction of this flow to a neighborhood of origin in the hyperplane $x_1=0$ and $\xi$ in a conic neighborhood of $(1,0,\dots,0)$ in $S_x^*\Omega$, the cosphere space at $x$.  Assuming that $\epsilon$ in \eqref{epsilonchart}, and $\tilde{c}_0$ is sufficiently small,
we have for $|t| \leq \tilde{c}_0$, the mapping
\begin{equation*}
  (t,x',\eta) \mapsto \chi_{t}(0,x',\eta)
\end{equation*}
generates a diffeomorphism from the neighborhood to a conic neighborhood of $(1,0,\dots,0)$ in the cosphere bundle $S^*\Omega$.  Indeed, recalling our assumption that $g^{ij}(0) = \delta^{ij}$, the derivative of this mapping at $(0,0,(1,0,\dots,0))$ is the identity.  Denote the inverse as
\begin{equation}\label{planeinverse}
  (\iota(x,\omega),\Phi(x,\omega),\Psi(x,\omega)) \in (-\tilde{c}_0,\tilde{c}_0) \times \{y_1=0\} \times S_{\Phi(x,\omega)}^* \Omega.
\end{equation}
Equivalently, these functions can be described in terms of the minimizing unit speed geodesic passing through $(x,\omega)$:  this geodesic passes through the $y_1=0$ plane at the point $y'=\Phi(x,\omega)$, the covector at this intersection point is given by $\Psi(x,\omega)$, and $\iota(x,\omega) = \Theta_g (x, \Phi(x,\omega))$.

We note that we may further assume that for any $x\in\Omega$, $\omega\mapsto \Psi(x,\omega)$ is an invertible mapping, and if $\eta\mapsto\omega(x,\eta)$ denotes the inverse, then $\omega(x,\eta)$ is the unit covector along the geodesic through $x$ whose covector at the intersection point with $y_1=0$ is $\eta$.

\subsubsection{The almost orthogonal decomposition.} Now let $\nu$ index a collection of vectors in a neighborhood of $(1,0,\dots,0)$ on $\mathbb{S}^{d-1}$ separated by a distance of at least $\frac 12 \lambda^{-1/8}$.   Define a corresponding partition of unity $\beta_\nu(\xi)$ such that $\supp(\beta_\nu)$ is contained in a spherical cap of diameter $2\lambda^{-1/8}$ about $\nu$ and $\sum_{\nu} \beta_\nu(\xi) = 1$ for $\xi \in \mathbb{S}^{d-1}$.  Then extend $\beta_\nu(\xi)$ to all of $\RR^{d}\setminus\{0\}$, so that it is homogeneous of degree zero.  Now define
\begin{equation*}
  q_\nu(x,\xi) = \tilde{\tilde{\psi}}(x)\beta_\nu\big(\Psi(x,\xi/|\xi|_{g(x)})\big)\tilde{\beta}(|\xi|_{g(x)}),
\end{equation*}
where $\tilde\beta$ is a bump function such that $\tilde\psi(x) v(t,x,\xi) = \tilde\psi(x) \tilde{\beta}(|\xi|_{g(x)})v(t,x,\xi)$ is supported where $|\xi| \approx 1$ and in a slightly larger conic region than $v(t,x,\cdot)$.  Moreover, we take $\tilde{\tilde{\psi}}$ to be a bump function supported in $\Omega$ and identically one on $\tilde\psi$.  This bump function means that $q_\nu(x,\xi)$ is not invariant under $\chi_t$, but we can assume that $\tilde{c}_0$ and the support is chosen suitably so that $q_\nu(\chi_t(x,\xi))=q_\nu(x,\xi)$ when $\tilde{\psi}(x)v(t,x,\xi)\neq 0$ and $|t|\leq\tilde{c}_0$.

The function $q_\nu$ thus defines a semiclassical symbol in the class $S_{1/8}$.  It is of the form considered in Theorem \ref{T:berardmicro} where the unit covector field $\omega(x)=\omega(x,\nu)$ is that of the minimizing geodesic passing through $x$ such that its intersection with $y_1=0$ has the covector $\nu/|\nu|_{g(x)}$.
We define $Q_\nu=\text{Op}(q_\nu)$ as in \eqref{standardquant} and hence up to error which is $\mathcal{O}(\lambda^{-N})$ in $L^2$ for some $N$ sufficiently large
\begin{equation}\label{sigmadecomp}
  \tilde\sigma_\lambda h = \sum_\nu \tilde\sigma_\lambda Q_\nu h.
\end{equation}
Moreover, the selection of the indices $\nu$ ensures that there exists a constant $C_d$ such that for any fixed $\nu$
\begin{equation}\label{cdconstant}
  \#\{\tilde{\nu}: \supp(q_\nu) \cap \supp(q_{\tilde{\nu}}) \neq \emptyset \} \leq C_d.
\end{equation}

In this work we will exploit the almost orthogonality of the decomposition \eqref{sigmadecomp} at the level of $L^2$ and also for products of these members in $L^r$ for $1\leq r \leq \infty$.  We begin by considering the former; the more general theory will be considered in Lemma \ref{L:aobound} and is adapted to $\tilde\sigma_\lambda$.

We first observe that by appealing to the FBI transform as in \cite[Theorem 13.3]{ZworskiSemiclassicalAnalysis}, we have for any symbol\footnote{This theorem can be applied to the rescaled symbol $q(\lambda^{-1/8}(x,\xi))$, which yields the decay rate of $\lambda^{-3/4}$ for the error term upon return to the original coordinates.  Since we are working in a subcritical symbol class, the distinction between Weyl quantization and the standard one in \eqref{standardquant} is inconsequential given change of quantization formulae.} $q \in S_{1/8}$
\begin{equation}\label{FBIconseq}
  \| \text{Op}(q) \|_{L^2(\RR^d) \to L^2(\RR^d)} \leq \|q\|_\infty + \mathcal{O}(\lambda^{-\frac 34}).
\end{equation}
Since we may restrict attention to sufficiently large $\lambda$, we have that for any subcollection $F$ of the $\nu$ and an arbitrary sequence $\epsilon_\nu = \pm 1$,
\begin{equation}\label{L2bounds}
\left\|\sum_{\nu\in F}\epsilon_\nu Q_{\nu}\right\|_{L^2 \to L^2},\; \left\|I-\sum_{\nu\in F}Q_{\nu}\right\|_{L^2 \to L^2}, \; \|Q_\nu\|_{L^2 \to L^2} \leq 2.
\end{equation}
\begin{proposition}\label{prop:aoL2}
  Suppose $h \in L^2(\RR^d)$ and the semiclassical Fourier transform of $h$ is supported where $\tilde\beta =1$.  Then for $\lambda$ sufficiently large,
\begin{equation}\label{aoproperty}
(4C_d)^{-1}\|h\|_{L^2}^2 \leq \sum_{\nu} \|Q_\nu h\|_{L^2}^2 \leq 4 \|h\|_{L^2}^2.
\end{equation}
Moreover, if $F$ is any subcollection of the $\nu$,
\begin{equation}\label{aoproperty2}
  \sum_{\nu\in F} \|Q_\nu h\|_{L^2}^2 +\left\|h-\sum_{\nu\in F} Q_\nu h\right\|_{L^2}^2 \leq 4 \|h\|_{L^2}^2.
\end{equation}
\end{proposition}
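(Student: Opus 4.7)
To establish \eqref{aoproperty2}, I will exploit Rademacher orthogonality in the spirit of Khintchine's inequality. Fix independent Rademacher signs $\{\epsilon_\nu\}_{\nu \in F}$ and consider the random operator
\[
V_\epsilon := \Big(I - \sum_{\nu \in F} Q_\nu\Big) + \sum_{\nu \in F} \epsilon_\nu Q_\nu = I - 2\sum_{\nu \in G} Q_\nu, \qquad G := \{\nu \in F : \epsilon_\nu = -1\}.
\]
Since $\mathbb{E}_\epsilon[\epsilon_\nu] = 0$ and $\mathbb{E}_\epsilon[\epsilon_\nu \epsilon_\mu] = \delta_{\nu\mu}$, the cross terms between the deterministic summand $I - \sum_{\nu \in F} Q_\nu$ and the $\epsilon_\nu Q_\nu h$, as well as those between distinct $\epsilon_\nu Q_\nu h$ and $\epsilon_\mu Q_\mu h$, vanish upon averaging, giving
\[
\mathbb{E}_\epsilon \|V_\epsilon h\|_{L^2}^2 = \Big\|h - \sum_{\nu \in F} Q_\nu h\Big\|_{L^2}^2 + \sum_{\nu \in F} \|Q_\nu h\|_{L^2}^2.
\]
The symbol of $V_\epsilon$ is $1 - 2\sum_{\nu \in G} q_\nu$; because each $q_\nu \geq 0$ and $\sum_\nu q_\nu = \tilde{\tilde{\psi}}(x)\tilde{\beta}(|\xi|_{g(x)}) \leq 1$, this symbol takes values in $[-1,1]$ pointwise. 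The FBI-transform estimate \eqref{FBIconseq} yields $\|V_\epsilon\|_{L^2 \to L^2} \leq 1 + O(\lambda^{-3/4}) \leq 2$ uniformly in $\epsilon$, so $\|V_\epsilon h\|_{L^2}^2 \leq 4\|h\|_{L^2}^2$ for every sign choice, proving \eqref{aoproperty2}. The upper bound in \eqref{aoproperty} is obtained as the special case where $F$ exhausts the index set and the nonnegative first term is dropped.

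For the lower bound in \eqref{aoproperty}, I combine two ingredients. First, since $\mathscr{F}_\lambda h$ is supported where $\tilde\beta = 1$ and $\sum_\nu q_\nu$ agrees with the effective identity on the relevant portion of phase space, the semiclassical symbol calculus produces a quasi-identity $\sum_\nu Q_\nu h = h + O_{L^2}(\lambda^{-N})\|h\|_{L^2}$ for any $N$. Second, by greedy coloring of the overlap graph on $\{\nu\}$, whose maximum degree is at most $C_d - 1$ by \eqref{cdconstant}, the index set admits a partition into $K \leq C_d$ color classes $C_1,\ldots,C_K$ such that within each $C_c$ the symbols $\{q_\nu\}_{\nu \in C_c}$ have pairwise disjoint supports. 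Because every term in the formal asymptotic expansion of $\overline{q_\nu} \# q_\mu$ is supported in $\supp q_\nu \cap \supp q_\mu$, disjointness gives $\|Q_\nu^* Q_\mu\|_{L^2 \to L^2} = O(\lambda^{-N})$ for distinct $\nu,\mu$ in the same color, hence
\[
\|T_c h\|_{L^2}^2 = \sum_{\nu \in C_c} \|Q_\nu h\|_{L^2}^2 + O(\lambda^{-N})\|h\|_{L^2}^2, \qquad T_c := \sum_{\nu \in C_c} Q_\nu.
\]

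Applying Cauchy--Schwarz in the color index and summing,
\[
\Big\|\sum_\nu Q_\nu h\Big\|_{L^2}^2 = \Big\|\sum_{c=1}^K T_c h\Big\|_{L^2}^2 \leq K\sum_{c=1}^K \|T_c h\|_{L^2}^2 \leq C_d \sum_\nu \|Q_\nu h\|_{L^2}^2 + O(\lambda^{-N})\|h\|_{L^2}^2,
\]
and inserting the quasi-identity yields $\|h\|_{L^2}^2 \leq 2C_d \sum_\nu \|Q_\nu h\|_{L^2}^2$ for $\lambda$ sufficiently large, which is a little stronger than the $(4C_d)^{-1}$ bound claimed in \eqref{aoproperty}. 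The main technical obstacle is managing the semiclassical errors across a number of cutoffs of order $\lambda^{(d-1)/8}$: only the $O(\lambda^{-\infty})$ decay rate for compositions with genuinely disjoint symbol supports in the class $S_{1/8}$ is strong enough to beat this combinatorial blowup, while the remaining $O(\lambda^{-3/4})$ FBI errors can be absorbed safely because they appear only inside uniformly many terms after applying \eqref{L2bounds}.
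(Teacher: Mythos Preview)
Your proof is correct and follows essentially the same approach as the paper. The Rademacher-sign averaging you use for \eqref{aoproperty2} and the upper bound is identical in substance to the paper's argument (which writes the expectation as an integral against Rademacher functions on $[0,1]$ and invokes \eqref{L2bounds}), and your greedy-coloring treatment of the lower bound is a mild repackaging of the paper's direct Cauchy--Schwarz over the near-diagonal pairs $(\nu,\tilde\nu)$ with overlapping symbol supports, relying on the same two inputs \eqref{cdconstant} and the $O(\lambda^{-N})$ bound for $Q_{\tilde\nu}^* Q_\nu$ with disjoint supports, and in fact yielding a slightly better constant.
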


\begin{proof}
We begin with the first inequality in \eqref{aoproperty}.  The symbolic calculus means that if $\supp(q_\nu) \cap \supp(q_{\tilde{\nu}}) = \emptyset$, then
\begin{equation}\label{offdiagqnu}
  \|Q_{\tilde{\nu}}^* \circ Q_\nu\|_{L^2 \to L^2} \lesssim_N \lambda^{-N}.
\end{equation}
Taking $N>\frac{d-1}{4}$ here we see that for $\lambda$ sufficiently large
\begin{equation*}
  \|h\|_{L^2}^2 \leq 2 \sum \left\{ \langle Q_\nu h, Q_{\tilde\nu} h\rangle_{L^2}: \supp(q_\nu) \cap \supp(q_{\tilde{\nu}}) \neq \emptyset \right\}.
\end{equation*}
Indeed, \eqref{offdiagqnu} ensures that the contribution of the remaining terms is negligible.  An application of Cauchy-Schwarz now show that the right hand side here is in turn bounded by $4C_d\sum_{\nu} \|Q_\nu h\|_{L^2}^2$, where $C_d$ is as in \eqref{cdconstant}.

Turning to the second inequality in \eqref{aoproperty}, this is essentially a consequence of \eqref{L2bounds} and the fact that the constant in Khintchine's inequality can be taken to be 1 when $p=2$.  More directly, consider the usual family of Rademacher functions $r_k(t) = \text{sgn}(\sin(2^k\pi t))$, $k=1,2,3,\dots$, which are known to form an orthonormal sequence in $L^2([0,1])$.  Hence for any injection $\nu \mapsto k(\nu)$, \eqref{L2bounds} gives that
\begin{equation*}
  \sum_\nu \|Q_\nu h\|_{L^2}^2 =  \int_{0}^{1}\int_{\RR^d} \left|\sum_\nu r_{k(\nu)}(t) Q_\nu h(x) \right|^2\,dxdt \leq 4 \|h\|_{L^2}^2.
\end{equation*}
To see \eqref{aoproperty2}, now consider an injection $\nu \mapsto k(\nu)$ defined on $F$ such that $k(\nu) \neq 1$ for all $\nu \in F$.  The triangle inequality implies that for all $t$,
\begin{equation*}
  \left|\sum_{\nu \in F}r_{k(\nu)}(t)q_\nu(x,\xi) + r_1(t)\left(1-\sum_{\nu\in F} q_\nu(x,\xi)\right)\right| \leq 1.
\end{equation*}
Proceeding similarly, we now have
\begin{multline*}
  \sum_{\nu\in F} \|Q_\nu h\|_{L^2}^2 +\left\|h-\sum_{\nu\in F} Q_\nu h\right\|_{L^2}^2 =\\  \int_{0}^{1}\int_{\RR^d} \left|\sum_{\nu\in F} r_{k(\nu)}(t) Q_\nu h(x) + r_1(t)(I-\sum_{\nu\in F} Q_\nu)h(x) \right|^2\,dxdt \leq 4 \|h\|_{L^2}^2.
\end{multline*}
\end{proof}

Note that \eqref{aoproperty} implies the crude bound
\begin{equation}\label{crudebound}
 \|h\|_{L^2} \lesssim \lambda^{\frac{d-1}{16}} \sup_\nu \|Q_\nu h\|_{L^2}.
\end{equation}

In \S\ref{S:aobilinear}, we will prove the following theorem:
\begin{theorem}\label{T:bubblethm}
Suppose $h$ is supported in $\Omega$ with $\|h\|_{L^2(\Omega)} \leq 4$.  Assume further that $h$ satisfies \eqref{aoproperty}, \eqref{crudebound}, and \eqref{sigmadecomp} (without error term).  Let $\tilde{\sigma}_\lambda$ be as in \eqref{localkernel} and define
\begin{equation*}
\tilde{A}_\alpha = \{x \in \Omega:|\tilde{\sigma}_\lambda h(x)| > \alpha\}.
\end{equation*}
Let $\delta_d = 2/(d+1)$ when $d \geq 3$ and $\delta_2 = 1/3$ when $d=2$.  Then
\begin{equation}\label{bubblebd}
\sup\left\{\alpha | \tilde{A}_\alpha |^{\frac{1}{p_c}}: \alpha\in (0,\lambda^{\frac{d-1}{4}+\frac 18}]\right\}
\lesssim \lambda^{\frac{1}{p_c}}\left( \max_\nu \|Q_\nu h \|_{L^2}^{\delta_d} \right) +
\lambda^{\frac{1}{p_c}-}.
\end{equation}
Here the second term on the right means $\lambda$ is raised to some given power which is strictly less than $1/p_c$.
\end{theorem}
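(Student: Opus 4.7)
\textbf{Proof proposal for Theorem \ref{T:bubblethm}.} The plan is to run a Tao--Vargas--Vega style bilinear-to-linear reduction adapted to the approximate projector $\tilde\sigma_\lambda$, exploiting the decomposition \eqref{sigmadecomp}. Since
\[
\alpha|\tilde A_\alpha|^{\frac{1}{p_c}} \leq \|\tilde\sigma_\lambda h\|_{L^{p_c,\infty}(\Omega)} \leq \big\| |\tilde\sigma_\lambda h|^2 \big\|_{L^{p_c/2,\infty}(\Omega)}^{1/2} \leq \big\| |\tilde\sigma_\lambda h|^2 \big\|_{L^{p_c/2}(\Omega)}^{1/2},
\]
it suffices to control $\||\tilde\sigma_\lambda h|^2\|_{L^{p_c/2}}$. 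I would expand
\[
|\tilde\sigma_\lambda h|^2 = \sum_{\nu,\tilde\nu} (\tilde\sigma_\lambda Q_\nu h) \overline{(\tilde\sigma_\lambda Q_{\tilde\nu} h)} + O(\lambda^{-N}),
\]
and perform a Whitney-type decomposition of the index pairs by the angular separation $\theta = |\nu - \tilde\nu|$, splitting into a diagonal piece of close pairs and dyadic transversal scales $\theta \sim 2^{-j}$ for $0 \leq j \lesssim \log_2(\lambda^{1/8})$.

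The diagonal piece is controlled by the finite-overlap property \eqref{cdconstant}, the linear bound \eqref{linearestimates}, Cauchy--Schwarz, and the $\ell^2$ almost orthogonality \eqref{aoproperty}, giving a contribution of size $\lambda^{2/p_c}\sum_\nu\|Q_\nu h\|_{L^2}^2 \lesssim \lambda^{2/p_c}$; this has the right $\lambda$-power but no Kakeya gain, so the improvement must come from the transversal pairs. For each dyadic $\theta$ I would invoke the bilinear estimate that is the subject of Section 4,
\[
\big\| (\tilde\sigma_\lambda Q_\nu h)(\tilde\sigma_\lambda Q_{\tilde\nu} h) \big\|_{L^{p_c/2}(\Omega)} \lesssim \lambda^{2/p_c}\, \theta^{-A}\, \|Q_\nu h\|_{L^2}\,\|Q_{\tilde\nu} h\|_{L^2},
\]
valid for pairs of angular separation $\sim \theta$, and then sum over such pairs using the $L^{p_c/2}$ almost orthogonality for products promised in \S\ref{SS:aoforbilinear}. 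This orthogonality is crucial: a naive triangle inequality would reintroduce a loss proportional to $\#\{\nu\} \sim \lambda^{(d-1)/8}$ and destroy the gain.

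To extract the factor $\max_\nu \|Q_\nu h\|_{L^2}^{\delta_d}$, I would then Hölder-split
\[
\sum_\nu \|Q_\nu h\|_{L^2}^2 \leq \big(\max_\nu \|Q_\nu h\|_{L^2}\big)^{\delta_d}\sum_\nu \|Q_\nu h\|_{L^2}^{2-\delta_d},
\]
and apply \eqref{aoproperty} together with the crude bound \eqref{crudebound} to interpolate the remaining sum against the universal bound $\sum_\nu \|Q_\nu h\|_{L^2}^2 \leq 16$; the specific value $\delta_d = 2/(d+1)$ (respectively $1/3$ when $d=2$) is determined by balancing this Hölder split against the bilinear exponent and the dyadic loss $\theta^{-A}$. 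The summation over $j \lesssim \log\lambda$ dyadic scales introduces only a $\log\lambda$ factor, which is absorbed into the admissible error $\lambda^{1/p_c-}$. The restriction $\alpha \leq \lambda^{(d-1)/4+1/8}$ is used to truncate pathological low-$\alpha$ contributions and to justify writing a weak-type bound as an $L^{p_c/2}$ bound on the square.

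The main obstacle is the bilinear estimate itself, which encodes the geometric fact that a pair of conic cutoffs $Q_\nu$, $Q_{\tilde\nu}$ at angular separation $\theta$ produces Fourier integrals whose wave fronts meet transversally at angle $\sim\theta$, yielding a quantitative improvement over the trivial Cauchy--Schwarz bound. Proving this requires a careful use of the Carleson--Sjölin structure of the phase $\varphi$ in \eqref{localkernel} together with the intertwining properties of $Q_\nu$ with the geodesic flow built in \S\ref{SS:sigmaanalysis}. A secondary obstacle is the $L^{p_c/2}$ almost orthogonality of the products, which I expect to prove by analyzing the frequency support of $(\tilde\sigma_\lambda Q_\nu h)\overline{(\tilde\sigma_\lambda Q_{\tilde\nu} h)}$ via stationary phase applied to the kernel of $\tilde\sigma_\lambda^* \tilde\sigma_\lambda$ composed with $Q_\nu$ and $Q_{\tilde\nu}^*$.
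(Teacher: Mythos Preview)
There is a genuine gap, centered on two related misidentifications.

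First, passing immediately to the strong norm $\||\tilde\sigma_\lambda h|^2\|_{L^{p_c/2}}$ discards the hypothesis $\alpha \le \lambda^{\frac{d-1}{4}+\frac18}$, and this restriction is not cosmetic: it is precisely what makes the off-diagonal (transversal) piece small.  The bilinear estimate at the critical exponent $q=p_c$ carries \emph{no} $\theta$-dependent gain---in Lemma~\ref{L:offdiag} the exponent on $2^j$ is $d-1-\tfrac{2(d+1)}{q}$, which vanishes at $q=p_c$.  Thus in strong $L^{p_c/2}$ each of the $\approx\log\lambda$ dyadic scales contributes $\lambda^{2/p_c}\|h\|_{L^2}^2$, and the sum over $j$ incurs a $\log\lambda$ \emph{loss}, not something absorbable into $\lambda^{1/p_c-}$.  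The paper instead stays at the level of the distribution function, applies Chebyshev to $\Upsilon^{\text{off}}$ with a \emph{subcritical} exponent $q\in(\tfrac{2(d+2)}{d},p_c)$, and uses the resulting factor $\alpha^{1-q/p_c}$ together with the \emph{upper} bound on $\alpha$ to convert the $2^j$-loss in Lemma~\ref{L:offdiag} into a genuine power saving; this is exactly \eqref{crucialoffbd}.  Your reading of the $\alpha$-restriction (``truncate pathological low-$\alpha$ contributions'') is therefore inverted.

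Second, the roles of the two pieces are reversed: the factor $\max_\nu\|Q_\nu h\|_{L^2}^{\delta_d}$ comes from the \emph{diagonal} block $\Upsilon^{\text{diag}}$, while the transversal block contributes the $\lambda^{1/p_c-}$ term.  Your diagonal bound $\lambda^{2/p_c}\sum_\nu\|Q_\nu h\|_{L^2}^2\lesssim\lambda^{2/p_c}$ already has no $\max$-factor, so nothing the transversal piece does can rescue \eqref{bubblebd}.  The correct extraction uses Lemma~\ref{L:aobound} at $r=p_c/2$, which produces an $\ell^{r^*}$ sum with $r^*=\min(p_c/2,(p_c/2)')$; after the linear bound \eqref{linearestimates} and a H\"older split one lands on an $\ell^p$ sum with $p>2$, from which $\max^{\delta_d}$ can be pulled out leaving an $\ell^2$ sum that \emph{is} controlled by \eqref{aoproperty} (see the computation following \eqref{linearapp}).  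Your H\"older split goes the other way: extracting $\max^{\delta_d}$ from $\sum_\nu\|Q_\nu h\|_{L^2}^2$ leaves $\sum_\nu\|Q_\nu h\|_{L^2}^{2-\delta_d}$, an $\ell^{<2}$ sum over $\sim\lambda^{(d-1)/8}$ indices, and neither \eqref{aoproperty} nor \eqref{crudebound} bounds that independently of $\lambda$.
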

As we shall see in \eqref{crucialoffbd} below, the assumption $\alpha\in (0,\lambda^{\frac{d-1}{4}+\frac 18}]$ will allow us to exploit gains in bilinear estimates that correspond to the ``subcritical'' range of $L^q$ spaces with $2 < q < p_c$.  Indeed, \eqref{crucialoffbd} is a subtle but crucial observation in the present work, showing that weak bounds can be combined with known bilinear estimates to avoid the impediments presented by localizing the momenta in scales as fine as $\lambda^{-1/2}$ (a necessary technical difficulty in \cite{BlairSoggeRefinedHighDimension}). The second term in the right side of (3.18) corresponds to the gain in the bilinear estimates corresponding to angular separation larger than $\lambda^{-1/8}$ and the above assumptions on $\alpha$, while the first term in the right side of (3.18) corresponds to the contribution to the bilinear estimate for near-diagonal terms corresponding to separation smaller than $\lambda^{-1/8}$. As noted above, there is nothing special about the power $1/8$. Any number between $0$ and $1/2$ should work after adjusting the power of $\alpha$ correspondingly.


\subsection{Finalizing the contradiction}\label{SS:finalize}
Recall from \eqref{wksaturation}, we have sequences $B_k,\lambda_k \to \infty$ and corresponding $\alpha_k$ satisfying
\begin{equation*}
\frac{B_k\lambda_k^{1/p_c}}{(\log\lambda_k)^{\veps_1}}   <
\alpha_k\left|\left\{ |\tilde{\sigma}_k\rho_k f_k| >\alpha_k \right\}\right|^{\frac{1}{p_c}},\qquad 0<\alpha_k \leq \lambda_k^{\frac{d-1}{4}+\frac 18}.
\end{equation*}
Here we use the same notation convention $\tilde{\sigma}_k= \tilde{\sigma}_{\lambda_k}$, and it is understood that the set on the right in the inequality is $\{x\in \Omega: |(\tilde{\sigma}_k\rho_k f_k)(x)| >\alpha_k \}$.  Recall that the semiclassical wave front set of the kernel of $\tilde\sigma_k$ is contained in $\Omega \times \{ |\xi|\approx 1\}$ given the localization of the symbol $v(t,x,\xi)$ above.  We therefore make a slight abuse of notation and assume that $\rho_k f_k$ satisfies the assumptions on $h$ in Theorem \ref{T:bubblethm}, including having support in $\Omega$, though strictly speaking this only applies to a microlocalization of this function.  In particular we assume $\rho_kf_k$ satisfies \eqref{aoproperty}, \eqref{crudebound}, and \eqref{sigmadecomp} (at the cost of shrinking the $B_k$ and $\alpha_k$ one last time).  By \eqref{aoproperty2}, for any set of $\{Q_{\nu_l}\}_{l=1}^L$ with $\nu_l \neq \nu_j$ when $j\neq l$,
\begin{equation}\label{aocollection}
  \left\|\rho_{k}f_k-\sum_{l=1}^{L}Q_{\nu_l}\rho_kf_k\right\|_{L^{2}(\Omega)}^2 \leq 4\|\rho_kf_k\|_{L^2(\Omega)}^2 -\sum_{l=1}^{L} \|
 Q_{\nu_l}\rho_{k}f_k\|_{L^2(\Omega)}^2
\end{equation}
Let $C(2C_d)^{-\delta_d}$ exceed the implicit constant in \eqref{bubblebd}, where $C_d$ is defined in \eqref{cdconstant}. Take $N_k\in \mathbb{N}$ such that
\[
  \frac{N_k}2  \leq 4\left( \frac{2C(\log\lambda_k)^{\veps_1}}{B_k}\right)^{2/\delta_d} < N_k .
\]
If the middle expression is strictly less than $1/2$, take $N_k=1$.  We note for future use that in either case, we have
\begin{equation*}
  N_k = o((\log\lambda_k)^{2\veps_1/\delta_d}).
\end{equation*}

We claim there exists a selection of distinct $Q_{\nu_1}\rho_kf_k, \dots, Q_{\nu_{N_k}}\rho_kf_k$, with $Q_\nu$ as in Theorem \ref{T:bubblethm}, which satisfies for $k$ sufficiently large
\begin{gather}
\left(\frac{B_k}{2C(\log\lambda_k)^{\veps_1}} \right)^{1/\delta_d} \leq \left\|Q_{\nu_l}\rho_k f_k\right\|_{L^2(\Omega)}, \text{ for any }l =1,\dots, N_k, \label{lowermass}\\
    \begin{gathered}\label{recursive}
    \frac{B_k\lambda_k^{1/p_c}}{2(\log\lambda_k)^{\veps_1}} \leq
    \alpha_k \left|\left\{\left|\tilde{\sigma}_k\left(\rho_{k}f_k-\sum_{l=1}^{L}Q_{\nu_l}\rho_k f_k\right) \right|>\frac{\alpha_k}{2} \right\}\right|^{1/p_c},
\\
\text{for any } L=1,\dots, N_k,
\end{gathered}
 \end{gather}
and in the latter case, $\tilde{\sigma}_k$ acts on the function in parentheses.  We now show how to derive a contradiction assuming these two hold.  Recall that the integral kernel of $\tilde{\sigma}_k$ is supported in $\Omega\times\Omega$.  Hence \eqref{recursive} and the classical $L^{p_c}$ bounds of the second author on $\tilde{\sigma}_k$ in \eqref{linearestimates} gives
\begin{equation*}
 \frac{B_k\lambda_k^{1/p_c}}{2(\log\lambda_k)^{\veps_1}} \lesssim \lambda_k^{1/p_c}\left\|\rho_{k}f_k-\sum_{l=1}^{N_k}Q_{\nu_l}\rho_kf_k\right\|_{L^{2}(\Omega)}.
\end{equation*}
We now multiply by $\lambda_k^{-1/p_c}$, square both sides, and apply \eqref{aocollection} and \eqref{lowermass} to obtain
\begin{align*}
 \left(\frac{B_k}{2(\log\lambda_k)^{\veps_1}}\right)^2 &\lesssim
 4\|\rho_kf_k\|_{L^2(\Omega)}^2 -\sum_{l=1}^{N_k} \|
 Q_{\nu_l}\rho_{k}f_k\|_{L^2(\Omega)}^2 \\
 &\leq 4 -N_k\left( \frac{B_k}{2C(\log\lambda_k)^{\veps_1}}\right)^{2/\delta_d}.
\end{align*}
Here we have used that $\|f_k\|_{L^2} =1$ and our assumption $\|\rho\|_{L^\infty} \leq 1$ (cf. \eqref{rhoLinfty}). Since $N_k$ is selected so that the right hand side is negative, we obtain a contradiction.

To see how to construct $Q_{\nu_1}\rho_kf_k, \dots, Q_{\nu_{N_k}}\rho_kf_k$, we proceed inductively.  For any $L=1, \dots, N_k-1$, we show how to select the successive  function in the collection given the previously chosen $Q_{\nu_1}\rho_kf_k, \dots, Q_{\nu_{L}}\rho_kf_k$ which satisfy \eqref{lowermass}, \eqref{recursive}.  The initial selection of $Q_{\nu_1}$ is essentially the same, simply take $h_2 =0$ in the following argument.  Denote
\begin{equation*}
 h_1 = \rho_{k}f_k-\sum_{l=1}^{L}Q_{\nu_l}\rho_k f_k = \sum_{\nu \neq \nu_l}Q_{\nu_l}\rho_k f_k, \qquad h_2 =\sum_{l=1}^{L}Q_{\nu_l}\rho_k f_k,
\end{equation*}
where the second expression for $h_1$ is a sum over all $\nu$ distinct from the $\nu_1,\dots,\nu_L$.  Then, by our assumptions
\begin{equation}\label{distsplit}
 \frac{B_k\lambda_k^{1/p_c}}{(\log\lambda_k)^{\veps_1}} <
 \alpha_k \left|\left\{ |\tilde{\sigma}_k h_1|>\frac{\alpha_k}{2}\right\} \right|^{\frac{1}{p_c}} +
 \alpha_k \left|\left\{ |\tilde{\sigma}_k h_2|>\frac{\alpha_k}{2}\right\} \right|^{\frac{1}{p_c}}.
\end{equation}

Our first main claim is that we can use Corollary \ref{C:microlp} to see that
\begin{equation}\label{smallnorms}
  \alpha_k \left|\left\{ |\tilde{\sigma}_k h_2|>\frac{\alpha_k}{2}\right\} \right|^{1/p_c} = o\left(\lambda_k^{1/p_c}(\log \lambda_k)^{-\veps_1}\right).
\end{equation}
We initially observe the following $L^2 \to L^{p_c}$ ``commutator bounds''
  \begin{equation}\label{sigmaQcommute}
    \left\|\tilde\sigma_\lambda Q_\nu - Q_\nu \tilde\sigma_\lambda \right\|_{L^2(\Omega)\to L^{p_c}(\Omega)} \lesssim \lambda^{-\frac 14}.
  \end{equation}
Morally, this is Sobolev embedding and Egorov's theorem combined with the invariance of $q_\nu$ under the flow $\chi_t$.  However, we give a direct proof below that will be shown after the related Lemma \ref{L:pdofiocomp}.  Assuming \eqref{sigmaQcommute}, we use properties of the distribution function and Chebyshev's inequality to get
\begin{equation*}
   \alpha_k^{p_c} \left|\left\{ |\tilde{\sigma}_k h_2|>\frac{\alpha_k}{2}\right\} \right| \lesssim \sum_{l=1}^{L}\alpha_k^{p_c}
   \left| \left\{|Q_{\nu_l}\tilde\sigma_k\rho_kf_k|>\frac{\alpha_k}{4L} \right\} \right| + L\lambda^{-\frac{p_c}{4}}.
\end{equation*}
Since the $1/p_c$ power of the second term on the right is much stronger than the bounds in \eqref{smallnorms}, we are left with estimating the first term on the right hand side.

We next observe that for any $p \in[1,\infty]$, $\|Q_{\nu_l}\|_{L^{p}\to L^{p}} \lesssim 1$, which will allow us to eliminate $\tilde{\sigma}_k$ and apply Corollary \ref{C:microlp}.  Indeed, given that $q_\nu$ satisfies \eqref{microcutoff} with $\omega(x)= \omega(x,\nu)$, integration by parts in the expression for the integral kernel $Q_{\nu_l}(x,y)$ yields the pointwise bounds
\begin{equation*}
  |Q_{\nu_l}(x,y)| \lesssim \lambda^{1+\frac{7}{8}(d-1)}(1+\lambda|\omega(x,\nu)\cdot(x-y)| + \lambda^{\frac 78}|x-y|)^{-(d+1)}.
\end{equation*}
Hence the uniform bounds on $L^p$ follow from the generalized Young's inequality.  By Chebyshev's inequality, \eqref{sigmaQcommute}, and \eqref{sigmareplace}, we now have
\begin{equation*}
   \alpha_k^{p_c}\left| \left\{|Q_{\nu_l}\tilde\sigma_k\rho_kf_k|>\frac{\alpha}{4L} \right\} \right| \lesssim
   \alpha_k^{p_c}\left| \left\{|Q_{\nu_l}\rho_kf_k|>\frac{\alpha}{8L} \right\} \right| + \lambda(\log\lambda)^{-p_c},
\end{equation*}
and as before the last term on the right is of the desired size in \eqref{smallnorms}.

We may now use that Corollary \ref{C:microlp} yields the following bound,
\begin{equation}\label{microlpconsequence}
 \sum_{l=1}^{L}\alpha_k^{p_c} \left| \left\{|Q_{\nu_l}\rho_kf_k|>\frac{\alpha}{8L} \right\} \right| \lesssim L^{1+p_c}\lambda_kc(\lambda_k)^{\frac{p_c}{d+1}} \leq \lambda_kN_k^{1+p_c}c(\lambda_k)^{\frac{p_c}{d+1}}.
\end{equation}
To see that \eqref{smallnorms} now follows, take $p_c$-th roots of both sides here and recall that $N_k = o((\log\lambda_k)^{2\veps_1/\delta_d})$. The condition on the exponent in Remark \ref{R:optimalexponentintro} and the relation $\veps_1 = \frac{(d+1)\veps_0}{2}$ then implies
\begin{equation}\label{exponentlimitation}
  \begin{cases}
    \frac{2\veps_1}{\delta_2}(\frac{p_c+1}{p_c}) - \frac{1}{2\cdot 3} = -\veps_1, & \mbox{if } d=2, \\
    \frac{2\veps_1}{\delta_3}(\frac{p_c+1}{p_c}) - \frac{1}{4} < -\veps_1, & \mbox{if } d= 3,\\
    \frac{2\veps_1}{\delta_d}(\frac{p_c+1}{p_c}) - \frac{1}{d+1} = -\veps_1, & \mbox{if } d\geq 4.
  \end{cases}
\end{equation}
We show the details behind this when $d \geq 4$ so that $c(\lambda) = (\log\lambda)^{-1}$, and note that the other cases are verified similarly.  Given the prior observation on the size of $N_k$, the $p_c$-th root of the right hand side of \eqref{microlpconsequence} is
\begin{equation*}
\mathcal{O}\left(\lambda_k^{\frac{1}{p_c}}N_k^{1+\frac{1}{p_c}}c(\lambda_k)^{\frac{1}{d+1}}\right) = o\left(\lambda_k^{\frac{1}{p_c}}(\log\lambda_k)^{\frac{2\veps_1}{\delta_d}(\frac{p_c+1}{p_c})}
(\log\lambda_k)^{-\frac{1}{d+1}}\right).
\end{equation*}
It is now an easy algebraic computation to see that the choice of $\veps_0$ in Remark \ref{R:optimalexponentintro} means that $\veps_1$ satisfies \eqref{exponentlimitation}.  The improvements on the exponent for negatively curved manifolds claimed in Remarks \ref{R:optimalexponentintro} and \ref{R:negcurvature} follow since the equation for $d \geq 4$ in \eqref{exponentlimitation} is now satisfied for $d=2,3$.


Given \eqref{distsplit} and \eqref{smallnorms}, for $k$ large enough and independently of $L$,
\begin{equation}\label{nextchoiceprep}
 \frac 34\frac{B_k\lambda_k^{1/p_c}}{(\log\lambda_k)^{\veps_1}} < \alpha_k \left|\left\{ \left|\tilde{\sigma}_\lambda h_1\right|>\frac{\alpha_k}{2}\right\} \right|^{\frac{1}{p_c}}.
\end{equation}
We are now left to show that there exists $Q_{\nu_{L+1}}$, distinct from those previously chosen, such that $Q_{\nu_{L+1}}\rho_k f_k$ also satisfies the bounds in \eqref{lowermass}, i.e.,
\begin{equation}\label{nextchoice}
  \frac 12\frac{B_k}{(\log\lambda_k)^{\veps_1}} \leq C\|Q_{\nu_{L+1}}\rho_k f_k \|_{L^2(\Omega)}^{\delta_d}.
\end{equation}
Indeed, once this is shown \eqref{recursive} can be concluded by taking $h_1 = \rho_{k}f_k-\sum_{l=1}^{L+1}Q_{\nu_l}\rho_k f_k$ and $h_2=\sum_{l=1}^{L+1}Q_{\nu_l}\rho_k f_k$ in \eqref{distsplit} and using \eqref{smallnorms} once again.

Given \eqref{L2bounds}, we have $\|h_1\|_{L^2} \leq 4$ and hence by \eqref{nextchoiceprep} and Theorem \ref{T:bubblethm}, there exists $\nu_{\max}$ such that
\begin{equation}\label{nextchoiceTheoremapp}
   \frac 34\frac{B_k\lambda_k^{1/p_c}}{(\log\lambda_k)^{\veps_1}} < C(2C_d)^{-\delta_d}\lambda^{1/p_c}\left\| \sum_{\nu\neq\nu_l} Q_{\nu_{\max}}Q_{\nu}\rho_k f_k\right\|_{L^2(\Omega)}^{\delta_d},
\end{equation}
where the sum in the expression on the right is over all $\nu$ distinct from each of the $\nu_l$, $l =1, \dots,L$.  Here we have used our assumption that $C(2C_d)^{-\delta_d}$ exceeds the implicit constant in \eqref{bubblebd}.  Now take $\nu_{L+1}$ so that
\begin{equation}\label{maxbubble}
  \| Q_{\nu_{\max}}Q_{\nu_{L+1}}\rho_k f_k\|_{L^2} = \max\left\{\| Q_{\nu_{\max}}Q_{\nu}\rho_k f_k\|_{L^2}: \nu \neq \nu_1,\dots, \nu_L\right\},
\end{equation}
so that $Q_{\nu_{L+1}}$ is distinct from the previously chosen operators.  Note that by the symbolic calculus
\begin{equation}\label{disjointsupp}
  \| Q_{\nu}\circ Q_{\nu'}\|_{L^2 \to L^2} \lesssim_N \lambda^{-N}, \quad \text{ if }\supp(q_{\nu})\cap \supp(q_{\nu'})= \emptyset.
\end{equation}
We therefore must have $\supp(q_{\nu_{\max}})\cap \supp(q_{\nu_{L+1}}) \neq \emptyset$ in \eqref{maxbubble}, since otherwise \eqref{disjointsupp} would imply a contradiction of \eqref{nextchoiceTheoremapp}.  Hence \eqref{cdconstant}, \eqref{maxbubble}, and taking $N > 1+\frac{d-1}{8}$ in \eqref{disjointsupp} then yields for $k$ large enough
\begin{align*}
  \left\| \sum_{\nu\neq\nu_l} Q_{\nu_{\max}}Q_{\nu}\rho_k f_k\right\|_{L^2(\Omega)}
  &\leq C_d \| Q_{\nu_{\max}}Q_{\nu_{L+1}}\rho_k f_k\|_{L^2} + \lambda_k^{-1}\\
  &\leq 2C_d \| Q_{\nu_{L+1}}\rho_k f_k\|_{L^2}+\lambda_k^{-1}.
\end{align*}
Combining this with \eqref{nextchoiceTheoremapp} then gives \eqref{nextchoice} for large enough $k$.


\begin{remark}\label{R:optimalexponent}
  The condition \eqref{exponentlimitation} is the strongest limitation on $\veps_1$, which in turn gives $\veps_0$ as in \eqref{eps0}.  Indeed, the only other assumption was $\veps_1\leq \frac{1}{d+1}<1$ in \eqref{initwkblowup} and \eqref{sigmareplace}.  Also, with some small changes in the exposition, the arguments here show that in the $d=3$ case of Theorem \ref{T:mainthm}, we have
  \begin{equation*}
          \| \rho(T(\lambda - P))\|_{L^2(M) \to L^{4}(M)} \lesssim \frac{\lambda^{1/p_c}(\log\log\lambda)^{1/4}}{(\log\lambda)^{1/48}}.
  \end{equation*}
\end{remark}
\section{Almost orthogonality and bilinear estimates}\label{S:aobilinear}
Here we prove Theorem \ref{T:bubblethm}, which involves bilinear estimates and almost orthogonality in the spirit of the prior works of the authors \cite{SoggeKaknik}, \cite{BlairSoggeKaknik}, \cite{BlairSoggeRefined}, \cite{BlairSoggeRefinedHighDimension}.

\subsection{Whitney-type decompositions and the key lemmas} Recall that $\nu$ indexes a $\approx \lambda^{-1/8}$ separated set in a neighborhood of $(1,0,\dots,0)$ on $\mathbb{S}^{d-1}$. Given \eqref{sigmadecomp}, we may write
\begin{equation}\label{sigmasquared}
  (\tilde{\sigma}_\lambda h)^2  = \sum_{\nu,\tilde \nu}
  (\tilde{\sigma}_\lambda Q_\nu h)(\tilde{\sigma}_\lambda Q_{\tilde \nu} h).
\end{equation}
We may thus view this neighborhood of $(1,0,\dots,0)$  as a graph in the last $d-1$ variables, and given $\nu, \tilde{\nu} \in \mathbb{S}^{d-1}$, we let $\nu', \tilde{\nu}'$ denote the projection of these vectors onto the last $d-1$ coordinates.  This allows us to organize the sum here in a fashion similar to that in \cite[p.513]{BlairSoggeRefinedHighDimension}, which in turn is analogous to the Whitney decomposition taken in \cite[\S2.5]{TaoVargasVega}.

Consider the standard family of dyadic cubes in $\RR^{d-1}$ with $\tau_{\mu'}^j$ denoting the translation of $[0,2^j)^{d-1}$ by $\mu'\in 2^j\mathbb{Z}^{d-1}$.  Two dyadic cubes of sidelength $2^j$ are declared to be \emph{close} if they are not adjacent, but have adjacent parents of sidelength $2^{j+1}$, and in this case we write $\tau_{\mu'}^j \sim \tau_{\tilde \mu'}^j$.  Note that close cubes satisfy $d(\tau_{\mu'}^j , \tau_{\tilde \mu'}^j) \approx 2^j$.  As noted in \cite[p.971]{TaoVargasVega}, any two distinct points $\nu',\tilde\nu' \in \RR^{d-1}$ lie in a unique pair of close cubes, that is, there exists a unique triple $j,\mu,\mu'$ such that $(\nu',\tilde\nu') \in \tau_{\mu'}^j \times \tau_{\tilde \mu'}^j$ and $\tau_{\mu'}^j \sim \tau_{\tilde \mu'}^j$.  We remark that in what follows we only need to consider $j \leq 0$. 


Let $J$ be the integer satisfying $2^{J-1}<8\lambda^{-1/8} \leq 2^J$. The observations above now imply that the sum in \eqref{sigmasquared} can be organized as
\begin{equation}\label{sumorganized}
\left(  \sum_{j=J+1}^{0} \sum_{(\nu',\tilde \nu') \in \tau_{\mu'}^j \times \tau_{\tilde \mu'}^j: \tau_{\mu'}^j \sim \tau_{\tilde \mu'}^j}+ \sum_{(\nu',\tilde \nu')\in \Xi_J}\right)
  (\tilde{\sigma}_\lambda Q_\nu h)(\tilde{\sigma}_\lambda Q_{\tilde \nu} h)
\end{equation}
where $\Xi_J$ indexes the remaining pairs such that $|\nu'-\tilde\nu'| \lesssim \lambda^{-1/8}$, including the on-diagonal pairs $\nu'= \tilde\nu'$.  To see that this does indeed rewrite the sum in \eqref{sigmasquared}, note that if $\nu'\neq\tilde\nu'$, then as observed above $(\nu',\tilde\nu') \in \tau_{\mu'}^j \times \tau_{\tilde \mu'}^j$ for some unique pair of close cubes.  If $j \leq J$, then we say $(\nu',\tilde\nu') \in \Xi_J$.  Otherwise, it is included in the first sum in \eqref{sumorganized}.  Note that
\begin{equation}\label{XiJcardinal}
\text{ for each $\nu'$, }\; \#\{\tilde{\nu}':(\nu',\tilde\nu') \in \Xi_J \} = \mathcal{O}(1).
\end{equation}
For $j >J$, we define $\Xi_j$ differently, indexing
\begin{equation*}
\Xi_j:=  \{(\mu', \tilde \mu') \in 2^j\mathbb{Z}^{2(d-1)}: \tau_{\mu'}^j \sim \tau_{\tilde \mu'}^j \}.
\end{equation*}
Let $\mu\in \mathbb{S}^{d-1}$ be the vector with positive first coordinate and last $d-1$ coordinates given by $\mu'$. Define
\begin{equation*}
 Q_{j,\mu}h := \sum_{\nu' \in \tau^j_{\mu'}} Q_\nu h,
\end{equation*}
so that
\begin{equation*}
  \sum_{(\nu',\tilde \nu') \in \tau_{\mu'}^j \times \tau_{\tilde \mu'}^j: \tau_{\mu'}^j \sim \tau_{\tilde \mu'}^j}(\tilde{\sigma}_\lambda Q_\nu h)(\tilde{\sigma}_\lambda Q_{\tilde \nu} h) =
  \sum_{(\mu',\tilde \mu')\in \Xi_j}  (\tilde{\sigma}_\lambda Q_{j,\mu}h)(\tilde{\sigma}_\lambda Q_{j,\tilde \mu}h).
\end{equation*}

Now define a semiclassical symbol $\tilde{q}_{j,\mu}$ satisfying
\begin{equation}
\begin{gathered}
\tilde{q}_{j,\mu}(x,\xi) \Bigg( \sum_{\nu' \in \tau^j_{\mu'}} \tilde{q}_\nu (x,\xi) \Bigg)= \sum_{\nu' \in \tau^j_{\mu'}} \tilde{q}_\nu (x,\xi) ,\\
  \left|\langle \omega(x, \mu), d_\xi\rangle^j \prtl^\beta_{x,\xi}  \tilde{q}_{j,\nu}\right|\lesssim_{\beta, j} 2^{-j|\beta|},\\
  \supp(\tilde{q}_{j,\mu}) \subset \left\{(x,\xi) \in T^* \Omega: \left|\xi/|\xi|_{g(x)}-\omega(x,\mu) \right|_{g(x)} \lesssim 2^j, |\xi| \approx 1\right\},\label{suppqtilde}
  \end{gathered}
\end{equation}
where $\omega(x,\mu)\in S_x^*\Omega$ is the covector of the unit speed geodesic passing through $x$ and whose covector takes the form $\mu/|\mu|_{g(x)}$ as it passes through the $x_1=0$ plane (see the discussion following \eqref{planeinverse}).  As usual, denote $\tilde{Q}_{j,\mu}:=\text{Op}(\tilde{q}_{j,\mu})$ with the usual quantization \eqref{standardquant}. Taking the support of $\tilde{q}_{j,\mu}$ suitably we may assume
\begin{equation}\label{supportprops}
  \begin{gathered}
    d\big(\supp(\tilde{q}_{j,\mu}),\supp(\tilde{q}_{j,\tilde \mu})\big) \approx 2^j, \quad \text{for } (\mu',\tilde \mu') \in \Xi_j, \;j=J+1,\dots,0,\\
    \|(I-\tilde{Q}_{j,\mu})\circ Q_{j,\mu} h\|_{L^2} \lesssim_N \lambda^{-N}.
  \end{gathered}
\end{equation}

Next define the bilinear operators
\begin{equation*}
\begin{gathered}
    \Upsilon(h_1,h_2)(x) :=
    \big(\tilde{\sigma}_\lambda h_1\big)(x)\big(\tilde{\sigma}_\lambda h_2\big)(x), \\
    \Upsilon_{j,\mu,\tilde \mu}(h_1,h_2) := \Upsilon(\tilde{Q}_{j,\mu}h_1,\tilde{Q}_{j,\tilde \mu} h_2),
    \qquad j=J+1,\dots,0.
\end{gathered}
\end{equation*}
This allows \eqref{sigmasquared} to be rewritten as
\begin{gather}
(\tilde{\sigma}_\lambda h)^2 =  \Upsilon^{\diag}(h) + \Upsilon^{\text{off}}(h)+ \Upsilon^{\text{smooth}}(h),\notag\\
    \Upsilon^{\diag}(h):= \sum_{(\nu',\tilde \nu')\in \Xi_J}(\tilde{\sigma}_\lambda Q_\nu h)(\tilde{\sigma}_\lambda Q_{\tilde \nu} h),\label{tripledecomp}\\
    \Upsilon^{\text{off}}(h):= \sum_{j=J+1}^{0} \sum_{(\mu,\tilde{\mu}) \in \Xi_j} \Upsilon_{j,\mu,\tilde{\mu}}\left({Q}_{j,\mu}h,{Q}_{j,\tilde \mu}h\right),\notag\\
    \Upsilon^{\text{smooth}}(h):= \sum_{j=J+1}^{0} \sum_{(\mu,\tilde{\mu}) \in \Xi_j} \left(\Upsilon\left(Q_{j,\mu}h,Q_{j,\tilde\mu}h\right)-
    \Upsilon_{j,\mu,\tilde{\mu}}\left(Q_{j,\mu}h, Q_{j,\tilde\mu}h\right)\right).\notag
\end{gather}
Each term in the sum defining $\Upsilon^{\text{smooth}}$ can be rewritten as a sum of 3 terms, each of which contains a factor of the form $\sigma_\lambda((I-\tilde{Q}_{j,\mu})\circ Q_{j,\mu} h)$ (or one with $\mu$ replacing $\tilde \mu$).  Hence linear estimates on $\sigma_\lambda$ in \eqref{linearestimates}, almost orthogonality in \eqref{crudebound}, and taking $N$ large in \eqref{supportprops} implies
\begin{equation*}
  \left\|  \Upsilon^{\text{smooth}}(h)\right\|_{L^{p_c/2}} \lesssim \lambda^{-N}\|h\|_{L^2}^2 \lesssim \lambda^{\frac{2}{p_c}}\left( \max_{\nu}\|Q_\nu h\|_{L^2}^{\delta_d} \right)^2.
\end{equation*}
Hence by \eqref{tripledecomp}
\begin{multline*}
  \left| \left\{ |\tilde{\sigma}_\lambda h| > \alpha \right\}\right| \leq
  \left| \left\{ |\Upsilon^{\diag}(h)| > \frac{\alpha^2}{3} \right\}\right|\\ +
  \left| \left\{ |\Upsilon^{\text{off}}(h)| > \frac{\alpha^2}{3} \right\}\right| + \left| \left\{ |\Upsilon^{\text{smooth}}(h)| > \frac{\alpha^2}{3} \right\}\right|.
\end{multline*}
As observed, the last term here is easily bounded by Chebyshev's inequality.  The following lemma shows that $\Upsilon^{\text{off}}$ satisfies stronger estimates as well and is closely related to \cite[Theorem 2.1]{BlairSoggeKaknik} and \cite[Theorems 3.3, 3.4]{BlairSoggeRefinedHighDimension}:
\begin{lemma}\label{L:offdiag} Suppose $\frac{2(d+2)}{d} < q < \frac{2(d+1)}{d-1}=p_c$.  Then for $j= J+1,\dots, 0$,
\begin{equation}\label{offdiag}
  \left\|\sum_{(\mu,\tilde{\mu}) \in \Xi_j} \Upsilon_{j,\mu,\tilde{\mu}}\left(Q_{j,\mu}h,Q_{j,\tilde{\mu}}h\right)\right\|_{L^{q/2}}
  \lesssim_q \lambda^{d-1 - \frac{2d}{q}}2^{j(d-1-\frac{2(d+1)}{q})}
  \left\| h\right\|_{L^2}^2.
\end{equation}
\end{lemma}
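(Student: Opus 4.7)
The strategy is to reduce \eqref{offdiag} to a single-pair bilinear Carleson--Sj\"olin estimate and then sum via an $L^{q/2}$ square-function-type almost orthogonality. Specifically, I aim to first prove, for $f_i$ microlocalized to $\supp(\tilde q_{j,\mu_i})$,
\[
\|\Upsilon_{j,\mu,\tilde\mu}(f_1,f_2)\|_{L^{q/2}}\lesssim_q \lambda^{d-1-\tfrac{2d}{q}}\,2^{j(d-1-\tfrac{2(d+1)}{q})}\,\|f_1\|_{L^2}\|f_2\|_{L^2},
\]
and then to combine this with the $L^{q/2}$ orthogonality lemma announced for \S\ref{SS:aoforbilinear} and the $L^2$ almost orthogonality \eqref{aoproperty2} of the $Q_{j,\mu}h$.

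For the fixed-pair estimate I would follow the Carleson--Sj\"olin / bilinear restriction template. Up to smoothing errors, $\tilde\sigma_\lambda \circ \tilde Q_{j,\mu}$ is an oscillatory integral operator whose $\xi$-symbol is supported in a cap of diameter $\approx 2^j$ about $\mu$, with Carleson--Sj\"olin phase $\lambda(t+\varphi(t,x,\xi)-y\cdot\xi)$ as in \cite[Ch.~5]{soggefica}. A Knapp-type rescaling $\xi=\mu+2^j\eta$ flattens this cap to unit scale and converts the composition with $\tilde Q_{j,\tilde\mu}$ into a pair of oscillatory integrals with canonical surfaces that are transverse at unit scale and effective curvature parameter $\lambda 2^{2j}$. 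At that scale one invokes the variable-coefficient bilinear Carleson--Sj\"olin bound of Lee (a variable-coefficient analogue of Tao's bilinear restriction theorem), valid exactly in the range $q>2(d+2)/d$. Reversing the change of variables and bookkeeping the Jacobians produces the displayed exponents $\lambda^{d-1-2d/q}$ and $2^{j(d-1-2(d+1)/q)}$. As a sanity check, at $q=p_c$ the $2^j$ exponent vanishes and the bound degenerates to the square of the linear Carleson--Sj\"olin estimate $\lambda^{2/p_c}$, while at the endpoint $q=2(d+2)/d$ it matches the classical bilinear bound after scaling.

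To sum in $(\mu,\tilde\mu)\in\Xi_j$, the plan is to invoke an $L^{q/2}$ square-function estimate of the form
\[
\Big\|\sum_{(\mu,\tilde\mu)\in\Xi_j}\Upsilon_{j,\mu,\tilde\mu}(Q_{j,\mu}h,Q_{j,\tilde\mu}h)\Big\|_{L^{q/2}}^{2}\lesssim \sum_{(\mu,\tilde\mu)\in\Xi_j}\|\Upsilon_{j,\mu,\tilde\mu}(Q_{j,\mu}h,Q_{j,\tilde\mu}h)\|_{L^{q/2}}^{2},
\]
coming from \S\ref{SS:aoforbilinear}. Plugging in the single-pair estimate, using the bounded multiplicity of close pairs (each $\mu$ has $\mathcal{O}(1)$ partners $\tilde\mu$), and applying \eqref{aoproperty2} through $\sum_\mu\|Q_{j,\mu}h\|_{L^2}^2\lesssim\|h\|_{L^2}^2$ via Cauchy--Schwarz in $(\mu,\tilde\mu)$, yields \eqref{offdiag} after taking a square root. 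The main obstacle is proving this $L^{q/2}$ almost orthogonality at non-integer $q/2$: a Plancherel-style expansion is unavailable, so one must verify, via a wave-packet or FIO phase-space decomposition, that the outputs $\tilde\sigma_\lambda\tilde Q_{j,\mu}h$ concentrate on geometrically disjoint tubes whose bilinear products remain essentially orthogonal in $L^{q/2}$. Establishing this in the variable-coefficient Carleson--Sj\"olin setting without losing the $2^j$ power is the delicate step.
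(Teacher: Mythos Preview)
Your overall strategy matches the paper's: first establish the single-pair bilinear bound \eqref{bilinearaoreduction}, then sum over $(\mu,\tilde\mu)\in\Xi_j$ using the almost orthogonality of Lemma~\ref{L:aobound}. Two points deserve correction.

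First, the form of almost orthogonality you invoke is too strong and not what Lemma~\ref{L:aobound} actually gives. You claim an $\ell^2$ square-function bound
\[
\Big\|\sum_{(\mu,\tilde\mu)\in\Xi_j}\Upsilon_{j,\mu,\tilde\mu}\Big\|_{L^{q/2}}^{2}\lesssim \sum_{(\mu,\tilde\mu)\in\Xi_j}\|\Upsilon_{j,\mu,\tilde\mu}\|_{L^{q/2}}^{2},
\]
but for $r=q/2\neq 2$ this is generally false (think of disjointly supported pieces in $L^1$). What Lemma~\ref{L:aobound} provides is the $\ell^{r^*}$ bound with $r^*=\min(q/2,(q/2)')\le 2$, obtained by interpolating the trivial $r=1,\infty$ cases (triangle inequality, giving $\ell^1$) with the $r=2$ case. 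The $r=2$ case is handled directly by expanding the inner product and showing via nonstationary phase that pairs with $|\mu_1-\mu_2|\gg 2^j$ contribute $\mathcal{O}(\lambda^{-N})$; no wave-packet decomposition is needed. Your worry that ``a Plancherel-style expansion is unavailable'' at non-integer $q/2$ is thus misplaced: interpolation bypasses it entirely. The weaker $\ell^{r^*}$ bound still suffices for the summation, since after inserting the single-pair estimate and using bounded multiplicity of close pairs one is left with $\big(\sum_\mu \|Q_{j,\mu}h\|_{L^2}^{2r^*}\big)^{1/r^*}$, and the embedding $\ell^2\hookrightarrow \ell^{2r^*}$ (valid as $r^*\ge 1$) together with the $L^2$ almost orthogonality of the $Q_{j,\mu}$ finishes the job.

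Second, for the single-pair estimate the paper takes a slightly different route than your $\xi$-rescaling: it first uses Lemma~\ref{L:pdofiocomp} to rewrite $\tilde\sigma_\lambda\circ\tilde Q_{j,\mu}$ with phase $\lambda\Theta_g(x,z)$, then introduces a further spatial localization to a $2^j$-tube about a fixed geodesic (the $\beta$ cutoffs in \eqref{spatialloc}), passes to Fermi coordinates straightening that geodesic, and finally performs the parabolic rescaling $(x',z')\mapsto(2^jx',2^jz')$ in physical space. This is dual to your frequency rescaling but makes the transversality condition \eqref{covectorsep} and the application of the $\varepsilon$-free bilinear oscillatory integral bound of \cite[Theorem~3.3]{BlairSoggeKaknik} (which sharpens Lee's theorem) more transparent.
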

To appreciate the gain furnished by this lemma in weak-$L^{p_c}$, the first two factors on the right in \eqref{offdiag} should be raised to the power of $\frac{q}{2p_c}$, which is $\lambda^{\frac{1}{p_c}}(\lambda 2^j)^{\frac{d-1}{2}(\frac{q}{p_c}-1)}$.
Summation in $j$  thus gives for any fixed $q \in (\frac{2(d+2)}{d},p_c)$
\begin{align}
  \alpha\left| \left\{ |\Upsilon^{\text{off}}(h)| > \frac{\alpha^2}{3} \right\}\right|^{\frac{1}{p_c}} &\lesssim \alpha^{1-\frac{q}{p_c}}\lambda^{\frac{1}{p_c}}
  (\lambda^{\frac 78})^{\frac{d-1}{2}(\frac{q}{p_c}-1)}\left\| h\right\|_{L^2}^{\frac{q}{p_c}}\label{crucialoffbd}\\
  &\lesssim \lambda^{\frac{1}{p_c}}\big(\alpha \lambda^{\frac{7-7d}{16}}\big)^{1-\frac{q}{p_c}}. \notag
\end{align}
Since $\alpha \leq \lambda^{\frac{d-1}{4}+\frac 18}$, the quantity in parentheses in the last line is $\mathcal{O}(\lambda^{\frac{5-3d}{16}})$.  Thus the right hand side can be bounded by the second term on right hand side of \eqref{bubblebd}.

A step in the proof of \eqref{offdiag} and in the treatment of $\Upsilon^{\diag}(h)$ is to show the following almost orthgonality lemma, akin to \cite[Theorem 3.3]{BlairSoggeRefinedHighDimension}.  This establishes an almost orthogonality principle in $L^r$ spaces, but with respect to the operators $\Upsilon_{j,\mu,\mu'}$ and their counterparts in the definition of $\Upsilon^\diag$. Hence this is a substantial variation on the $L^2$ almost orthogonality principle in Proposition \ref{prop:aoL2}.
\begin{lemma}\label{L:aobound}
For $1 \leq r \leq \infty$, set $r^* = \min(r,r')$ where $r'$ is the H\"older conjugate of $r$.  Then for any $j=J+1,\dots, 0$, and any $N \in \mathbb{N}$ large
\begin{multline}\label{aobound}
    \left\|\sum_{(\mu,\tilde{\mu}) \in \Xi_j} \Upsilon_{j,\mu,\tilde{\mu}}\left(Q_{j,\mu}h,Q_{j,\tilde{\mu}}h\right)\right\|_{L^{r}}\\
    \lesssim_N
    \left( \sum_{(\mu,\tilde{\mu}) \in \Xi_j} \left\| \Upsilon_{j,\mu,\tilde{\mu}}\left(Q_{j,\mu}h,Q_{j,\tilde{\mu}}h\right)\right\|_{L^{r}}^{r^*}\right)^{1/r^*} + \lambda^{-N}\|h\|_{L^{r}},
\end{multline}
\begin{equation}\label{aobounddiag}
  \left\|\Upsilon^{\diag}(h)\right\|_{L^{r}} \lesssim_N
  \left( \sum_{(\nu,\tilde{\nu}) \in \Xi_J}
  \left\| (\tilde{\sigma}_\lambda Q_\nu h)(\tilde{\sigma}_\lambda Q_{\tilde \nu} h)\right\|_{L^{r}}^{r^*}\right)^{1/r^*}+ \lambda^{-N}\|h\|_{L^{r}}.
\end{equation}
\end{lemma}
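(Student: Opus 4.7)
The plan is to reduce \eqref{aobound} and \eqref{aobounddiag} to a standard Littlewood--Paley almost-orthogonality statement by establishing that the bilinear pieces $F_{\mu,\tilde\mu}:=\Upsilon_{j,\mu,\tilde\mu}(Q_{j,\mu}h,Q_{j,\tilde\mu}h)$ are almost-orthogonal in frequency. First I would show that each $F_{\mu,\tilde\mu}$ has semiclassical Fourier transform essentially supported (up to $\lambda^{-N}\|h\|_{L^r}$ errors) in a set whose projection onto the last $d-1$ coordinates lies in a cube of sidelength $O(2^j)$ centered near $\mu'+\tilde\mu'$. For each factor $\tilde\sigma_\lambda \tilde Q_{j,\mu}h$, this localization is inherited from \eqref{suppqtilde}: a direct stationary-phase analysis of the composition (or equivalently, Egorov's theorem) using the parametrix \eqref{localkernel} together with the approximate invariance of $\tilde q_{j,\mu}$ under the Hamiltonian flow $\chi_t$ for $|t|\leq\tilde c_0$ shows that the semiclassical wavefront set of $\tilde\sigma_\lambda\tilde Q_{j,\mu}h$ is contained in the $\mu$-conic sector $\{(x,\xi):|\xi/|\xi|_{g(x)}-\omega(x,\mu)|_{g(x)}\lesssim 2^j\}$. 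Pointwise multiplication becomes convolution on the Fourier side, so $F_{\mu,\tilde\mu}$ inherits the claimed $(d-1)$-dimensional localization near $\mu'+\tilde\mu'$.

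Next I would invoke the key Whitney combinatorial property: for fixed $j$, each $p\in\RR^{d-1}$ lies in at most $O(1)$ of the sum cubes $\tau_{\mu'}^j+\tau_{\tilde\mu'}^j$ as $(\mu',\tilde\mu')$ ranges over $\Xi_j$. This yields bounded overlap of the frequency-localization cubes for $\{F_{\mu,\tilde\mu}\}_{(\mu,\tilde\mu)\in\Xi_j}$, and by a standard coloring argument $\Xi_j$ splits into $O(1)$ subcollections within each of which the cubes are pairwise disjoint. For \eqref{aobounddiag}, the analogous bounded overlap at scale $2^J\approx\lambda^{-1/8}$ is furnished directly by \eqref{XiJcardinal}. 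Within each disjoint subcollection, the bilinear pieces are honest Littlewood--Paley blocks (modulo the $\lambda^{-N}$ tail), so for $2\leq r\leq\infty$ the vector-valued Littlewood--Paley inequality gives
\begin{equation*}
\Big\|\sum_{(\mu,\tilde\mu)} F_{\mu,\tilde\mu}\Big\|_{L^r} \lesssim \Big\|\big(\sum |F_{\mu,\tilde\mu}|^2\big)^{1/2}\Big\|_{L^r} \lesssim \Big(\sum \|F_{\mu,\tilde\mu}\|_{L^r}^2\Big)^{1/2},
\end{equation*}
where the last step is Minkowski on $L^{r/2}$ (valid since $r/2\geq 1$); the embedding $\ell^{r'}\hookrightarrow\ell^2$ for $r'\leq 2$ then produces the bound with $r^*=r'$. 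For $1\leq r\leq 2$, the same Littlewood--Paley inequality combined with the reverse Minkowski inequality on $L^{r/2}$ (valid since $r/2\leq 1$) yields directly the $\ell^r$-sum with $r^*=r$.

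The main obstacle I anticipate is making step 1 quantitative with $\lambda^{-N}$ precision. Although the overall picture is morally Egorov combined with the flow-invariance of $q_\nu$, the operator $\tilde Q_{j,\mu}$ has symbol in the subcritical class with derivatives gaining only factors of $2^{-j}\leq\lambda^{1/8}$, so the standard semiclassical symbolic calculus (e.g. \cite{ZworskiSemiclassicalAnalysis}) applied to the composition $\tilde\sigma_\lambda\circ\tilde Q_{j,\mu}$ produces errors with only polynomial losses in $\lambda$; these are comfortably absorbed by the $\lambda^{-N}$ remainder after taking $N$ large. The required off-support bounds on the Schwartz kernel of the composition, needed to convert ``wavefront localization'' into genuine frequency localization of $F_{\mu,\tilde\mu}$ up to $L^r$-bounded tails, can be obtained by integration by parts in the $\xi$-variables of \eqref{localkernel}, exploiting that the support of $\tilde q_{j,\mu}$ forces the phase gradient to be bounded away from the relevant frequency $\eta$ whenever $\eta$ is outside the claimed localization cube.
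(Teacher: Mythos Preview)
Your proposal is the Tao--Vargas--Vega frequency-space argument (their Lemma~6.1), which is exactly right in the constant-coefficient model but has a genuine gap here at the point where you pass from \emph{wavefront set} localization to \emph{Fourier} localization. You correctly note that the semiclassical wavefront set of $\tilde\sigma_\lambda\tilde Q_{j,\mu}h$ is contained in $\{(x,\xi):|\xi/|\xi|_{g(x)}-\omega(x,\mu)|_{g(x)}\lesssim 2^j\}$, but this does \emph{not} give $\mathscr F_\lambda(\tilde\sigma_\lambda\tilde Q_{j,\mu}h)$ supported in a fixed cube of sidelength $O(2^j)$: the center $\omega(x,\mu)$ drifts with $x$ across the chart $\Omega$ by an amount of order $\epsilon$ (the fixed diameter in \eqref{epsilonchart}), which for small $j$ is much larger than $2^j\approx\lambda^{-1/8}$. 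Concretely, with the kernel from Lemma~\ref{L:pdofiocomp} one has $\mathscr F_\lambda(\tilde\sigma_\lambda\tilde Q_{j,\mu}h)(\eta)=\iint e^{i\lambda(\Theta_g(x,z)-x\cdot\eta)}V_{j,\mu}(x,z)h(z)\,dz\,dx$; the $x$-gradient of the phase is $d_x\Theta_g(x,z)-\eta$, and the set of $\eta$ for which this vanishes for \emph{some} $(x,z)$ in the support of $V_{j,\mu}$ has diameter $\gtrsim\epsilon$, not $2^j$. So the ``convolution on the Fourier side'' step does not produce localization of $F_{\mu,\tilde\mu}$ at scale $2^j$, and the bounded-overlap Whitney combinatorics at that scale cannot be invoked. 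The integration by parts you suggest in the $\xi$-variables of \eqref{localkernel} gives $d_\xi\varphi(t,x,\xi)-y$, which does not see the external frequency $\eta$ at all.

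The paper stays in physical space precisely to avoid this. The cases $r=1,\infty$ follow from the triangle inequality, so it suffices to prove $r=2$ and interpolate. At $r=2$ one expands $\|\sum F_{\mu,\tilde\mu}\|_{L^2}^2$ as a double sum of inner products $\langle F_{\mu_1,\tilde\mu_1},F_{\mu_2,\tilde\mu_2}\rangle$. Near-diagonal pairs with $|\mu_1-\mu_2|\leq C2^j$ are handled by Cauchy--Schwarz and finite overlap. For the remaining pairs, one inserts the representation \eqref{bilinearkernel} so that the inner product becomes an oscillatory integral in $x$ with phase $\Theta_g(x,z)+\Theta_g(x,\tilde z)-\Theta_g(x,w)-\Theta_g(x,\tilde w)$; the same geometry as in \eqref{covectorsep} gives $|d_x(\text{phase})|\gtrsim 2^j$ on the support of the amplitude when $|\mu_1-\mu_2|\gg 2^j$, and repeated integration by parts in $x$ (each step gaining a factor $\lesssim\lambda^{-3/4}$ in view of \eqref{pdofiocompest} and $2^j\gtrsim\lambda^{-1/8}$) produces the $\lambda^{-N}$ error. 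This is the ``variable coefficient'' substitute for Fourier orthogonality: one does not have disjoint frequency supports, but one does have a non-stationary phase in the physical pairing, which is all that is needed at $r=2$.
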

Given Lemma \ref{L:aobound}, we have that with $r=p_c/2$,
\begin{equation*}
\alpha\left| \left\{ |\Upsilon^{\diag}h| > \frac{\alpha^2}{3} \right\}\right|^{\frac{1}{p_c}}
\lesssim\left( \sum_{(\nu,\tilde{\nu}) \in \Xi_J}
  \left\| (\tilde{\sigma}_\lambda Q_\nu h)(\tilde{\sigma}_\lambda Q_{\tilde \nu} h)\right\|_{L^{r}}^{r^*}\right)^{1/r^*}+\lambda^{-N}\|h\|_{L^{r}}.
\end{equation*}
The factor $\lambda^{-N}$ means that the second term here can be harmlessly absorbed in to the second term in \eqref{bubblebd}.  Indeed, when $d \geq 3$, $r=p_c/2\leq 2$ so H\"older's inequality can be applied as $h$ is supported in $\Omega$.  When $d =2$, Sobolev embedding can be applied instead.  For the first sum on the right, the linear estimates \eqref{linearestimates} for $\tilde{\sigma}_\lambda $ shows it is bounded by
\begin{equation}\label{linearapp}
\lambda^{\frac{1}{p_c}} \left( \sum_{(\nu,\tilde{\nu}) \in \Xi_J}
\|Q_\nu h\|_{L^{2}}^{r^*}
\|Q_{\tilde{\nu}} h\|_{L^{2}}^{r^*}\right)^{\frac{1}{2r^*}}.
\end{equation}
This sum can be treated very similarly to \cite[\S3]{BlairSoggeRefinedHighDimension} and previous works.  We show the details for $d \geq 4$, as the other cases are handled similarly.  When $d \geq 4$, we have $p_c/2<2$ and hence $(p_c/2)^* = p_c/2$.  Using H\"older's inequality with $\frac{1}{r^*}=\frac{2}{p_c} = \frac 12 + \frac{4-p_c}{2p_c}$ and \eqref{XiJcardinal}, the sum is bounded by
\begin{multline*}
  \left(\sum_\nu \|Q_\nu h\|_{L^{2}}^2\right)^{\frac 14}
  \left(\sum_{\tilde{\nu}} \|Q_{\tilde{\nu}} h\|_{L^{2}}^{\frac{2p_c}{4-p_c}}\right)^{\frac{4-p_c}{4p_c}} \\
  \lesssim \left(\sum_\nu \|Q_\nu h\|_{L^{2}}^2\right)^{\frac{1}{p_c}}
  \left( \sup_\nu \|Q_\nu h \|_{L^2}^{1-\frac{2}{p_c}} \right) \lesssim
  \|h\|_{L^{2}}^{\frac{2}{p_c}}\left( \sup_\nu \|Q_\nu h \|_{L^2}^{1-\frac{2}{p_c}} \right) ,
\end{multline*}
and in the first inequality we use $\frac{2p_c}{4-p_c} = 2 + \frac{4p_c-8}{4-p_c}$ and \eqref{aoproperty}. When $d=3$, $\frac{p_c}{2}=2$ so we use H\"older's inequality again but with $\frac{2}{p_c} = \frac 12 + \frac{1}{\infty}$, making the first inequality extraneous. When $d=2$, the argument is similar to the $d\geq 4$ case, but $(p_c/2)^* = 3/2< p_c/2$, and the resulting exponent is $\delta_0 = \frac 13$.  This completes the proof of Theorem \ref{T:bubblethm} once we show Lemmas \ref{L:offdiag} and \ref{L:aobound}.

\subsection{Composition of $\tilde{\sigma}_\lambda$ with PDO and bilinear estimates}\label{SS:pdobilinear}  Here we prove Lemma \ref{L:offdiag}, assuming Lemma \ref{L:aobound} for now. As stated above, it is nearly the same as \cite[Theorem 2.1]{BlairSoggeKaknik} or \cite[Theorems 3.3, 3.4]{BlairSoggeRefinedHighDimension}, but we are somewhat thorough here as there are differences in the constructions.

Given Lemma \ref{L:aobound}, we are reduced to showing that for $\frac{2(d+2)}{d} < q < \frac{2(d+1)}{d-1}$,
\begin{equation}\label{bilinearaoreduction}
  \left\| \Upsilon_{j,\mu,\tilde{\mu}}\left(Q_{j,\mu}h,Q_{j,\tilde{\mu}}h\right)\right\|_{L^{q/2}}
  \lesssim_q \lambda^{d-1 - \frac{2d}{q}}2^{j(d-1-\frac{2(d+1)}{q})}
  \left\| Q_{j,\mu}h\right\|_{L^2}\left\| Q_{j,\tilde \mu}h\right\|_{L^2}.
\end{equation}
Indeed, if this holds, then given that for each $\mu \in 2^j\mathbb{Z}^{d-1}$ there are $\mathcal{O}(1)$ elements $\tilde \mu$ satisfying $(\mu,\tilde{\mu}) \in \Xi_j$ (similar to \eqref{XiJcardinal}), hence Lemma \ref{L:aobound} with $r=q/2$  and Cauchy-Schwarz means it suffices to bound
\begin{equation*}
\left(\sum_{(\mu,\tilde{\mu}) \in \Xi_j} \left\| Q_{j,\mu}h\right\|_{L^2}^{(\frac q2)^*}\left\| Q_{j,\tilde \mu}h\right\|_{L^2}^{(\frac q2)^*}\right)^{\frac{1}{(\frac q2)^*}} \lesssim \left(\sum_{\mu \in 2^j\mathbb{Z}^{d-1}} \left\| Q_{j,\mu}h\right\|_{L^2}^{2(\frac q2)^*}\right)^{\frac{1}{(\frac q2)^*}}.
\end{equation*}
But given the almost orthogonality of the $\{Q_{j,\mu}\}_{\mu}$ (proved similarly to \eqref{L2bounds} and \eqref{aoproperty}) and the embedding $\ell^2 \hookrightarrow \ell^{2(\frac q2)^*}$, the right hand side is $\mathcal{O}(\|h\|_{L^2}^2)$.

\subsubsection{Composition of $\tilde{\sigma}_\lambda$ with PDO} To set the stage for bilinear estimates, we need to examine the effect of composing $\tilde{\sigma}_\lambda$ with the $\tilde{Q}_{j,\mu}$. Recall that $\omega(z,\mu)$ is the covector at $z$ of the geodesic through $z$ whose covector at the intersection of the $y^1=0$ plane is $\mu/|\mu|_g$.  We use ``$\#$'' to denote the isomorphism from $T_x^*M$ to $T_x M$ determined by the metric $g$ (the ``musical isomorphism'').  Its inverse is denoted by ``$\flat$''.

\begin{lemma}\label{L:pdofiocomp}
For any $N \in \mathbb{N}$, the kernel of $\tilde{\sigma}_\lambda\circ \tilde{Q}_{j,\mu}$ can be written
\begin{equation}\label{pdofiocomp}
  (\tilde{\sigma}_\lambda\circ \tilde{Q}_{j,\mu})(x,z) =
  \lambda^{\frac{d-1}{2}}e^{i\lambda\Theta_g(x,z)}V_{j,\mu}(x,z) + \mathcal{O}(\lambda^{-N}),
\end{equation}
where $V_{j,\mu}=0$ unless $\Theta_g(x,z) \in (\frac{\tilde{c}_0}{4},\frac{\tilde{c}_0}{2})$ and the unit covector at $z$ of the geodesic from $z$ to $x$ lies in $\supp(\tilde{q}_{j,\mu}(z,\cdot))$.  Moreover, denoting $(\omega(x,\mu)^\#)^k$ as $k$ applications of the vector field obtained by raising the indices of $\omega(x,\mu)$,
\begin{equation}\label{pdofiocompest}
  \left|(\omega(x,\mu)^\#)^k\prtl^\beta_{x}V_{j,\mu}(x,z)\right| \lesssim_{k,\beta} 2^{-j|\beta|} \qquad \text{for any } k, \beta.
\end{equation}
\end{lemma}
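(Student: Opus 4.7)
I would realize the lemma by composing the FIO kernel \eqref{localkernel} with the PDO kernel \eqref{standardquant} and performing stationary phase twice. Writing the composition as
\begin{equation*}
(\tilde\sigma_\lambda\circ\tilde Q_{j,\mu})(x,z) = \frac{\lambda^{2d}}{(2\pi)^d}\int e^{i\lambda\Phi}\,\widehat\rho(t/\tilde{c}_0)\,v(t,x,\xi)\,\tilde\psi(y)\,\tilde q_{j,\mu}(y,\eta)\,dy\,d\eta\,d\xi\,dt,
\end{equation*}
with phase $\Phi = t+\varphi(t,x,\xi) - y\cdot\xi + (y-z)\cdot\eta$, the first step is stationary phase in $(y,\xi)$: the critical equations $\partial_y\Phi = \eta - \xi = 0$ and $\partial_\xi\Phi = \partial_\xi\varphi(t,x,\xi) - y = 0$ force $\xi = \eta$ and $y = \partial_\xi\varphi(t,x,\eta)$. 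The $2d\times 2d$ Hessian at the critical point has determinant of absolute value one and signature zero, so this step contributes $\lambda^{-d}$ and collapses the expression to a $(d{+}1)$-fold integral whose principal amplitude is $v(t,x,\eta)\tilde\psi(\partial_\xi\varphi(t,x,\eta))\tilde q_{j,\mu}(\partial_\xi\varphi(t,x,\eta),\eta)$, plus lower-order corrections of the same form and $\mathcal{O}(\lambda^{-N})$ errors.

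Next I would apply stationary phase in $(t,\eta)$. The eikonal equation $\partial_t\varphi + |d_x\varphi|_{g(x)} = 0$ combined with \eqref{generatingfcn} translates the critical equations $1 + \partial_t\varphi = 0$, $\partial_\eta\varphi = z$ into $\chi_t(z,\eta) = (x,\cdot)$ with $|\eta|_{g(z)} = 1$. Hence the unique critical point has $t_c = \Theta_g(x,z)$ and $\eta_c = \eta_c(x,z)$, the initial unit covector at $z$ of the minimizing geodesic from $z$ to $x$, and the value of the phase at the critical point is $\Theta_g(x,z)$. The $(d{+}1)\times(d{+}1)$ Hessian is non-degenerate by the Carleson--Sj\"olin condition already invoked in \cite[Lemma 5.1.3]{soggefica} and in the $2^j\approx 1$ case of the present lemma, so stationary phase contributes a factor $\lambda^{-(d+1)/2}$, producing the claimed $\lambda^{(d-1)/2}e^{i\lambda\Theta_g(x,z)}V_{j,\mu}(x,z) + \mathcal{O}(\lambda^{-N})$. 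Using $\partial_\xi\varphi(t_c,x,\eta_c) = z$, the principal part of $V_{j,\mu}$ is
\begin{equation*}
\widehat\rho\bigl(\Theta_g(x,z)/\tilde{c}_0\bigr)\,v(\Theta_g(x,z),x,\eta_c)\,\tilde\psi(z)\,\tilde q_{j,\mu}(z,\eta_c)\,H(x,z),
\end{equation*}
with $H$ a smooth Hessian-determinant factor. The support constraints $\Theta_g(x,z)\in(\tilde{c}_0/4,\tilde{c}_0/2)$ and the condition that the initial unit covector lies in $\supp(\tilde q_{j,\mu}(z,\cdot))$ then follow immediately from the supports of $\widehat\rho(\cdot/\tilde{c}_0)$ and $\tilde q_{j,\mu}(z,\cdot)$.

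The derivative estimate \eqref{pdofiocompest} hinges on the geometric observation that $\eta_c(x,z)$ is invariant under motion of the endpoint $x$ along the geodesic from $z$ to $x$. On the support of $V_{j,\mu}$ the flow vector $\omega(x,\mu)^\#$ coincides with the tangent to this geodesic at $x$ up to an $\mathcal{O}(2^j)$ angular tolerance, so $\langle\omega(x,\mu)^\#,\partial_x\rangle\eta_c(x,z) = 0$ at leading order. Consequently, $k$-fold flow derivatives $(\omega(x,\mu)^\#)^k$ never differentiate $\tilde q_{j,\mu}$ in its $\eta$-slot through $\eta_c$ and produce only bounded contributions from the remaining smooth factors. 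Transverse $x$-derivatives, by contrast, shift $\eta_c$ by an $\mathcal{O}(1)$ amount and therefore activate $\partial_\eta\tilde q_{j,\mu}$, each such differentiation costing $\mathcal{O}(2^{-j})$ by \eqref{suppqtilde}; iterating through the stationary phase asymptotic expansion yields the $2^{-j|\beta|}$ bound.

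The main obstacle I anticipate is systematically propagating this flow-versus-transverse dichotomy through the Jacobian of the first stationary phase and the subleading amplitudes of the second. The geometric input is clean, but verifying that every term in the asymptotic expansion inherits the same derivative structure requires careful bookkeeping of the $x$-dependence of the critical point data $(t_c,\eta_c)$, which in turn follows from the smoothness of $\varphi$ and the description of the geodesic flow recorded above.
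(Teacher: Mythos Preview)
Your proposal is correct and follows essentially the same route as the paper's proof: write the composed kernel as a $(2d{+}1)$-fold oscillatory integral, apply stationary phase to reduce to $\lambda^{(d-1)/2}e^{i\lambda\Theta_g(x,z)}$ times an amplitude evaluated at the geometric critical point, and then use the invariance of the covector $\eta_c(x,z)$ under motion of $x$ along the geodesic to explain why flow derivatives cost nothing while transverse derivatives cost $2^{-j}$ each.  The only cosmetic difference is the grouping of variables in the stationary phase---you integrate first in $(y,\xi)$ and then in $(t,\eta)$, whereas the paper effectively treats the $(y,\eta)$ pairing as a Fourier inversion (yielding $\eta=\xi$, $y=z$ directly) and then invokes the nonsingular mixed Hessian in $(t,\xi)$ from \cite[p.~140]{soggefica}; both orderings lead to the same critical data and the same power of $\lambda$.
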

\begin{proof}  This is a small variation on the stationary phase arguments in \cite[Lemma 5.1.3]{soggefica}.  The kernel $ (\tilde{\sigma}_\lambda\circ \tilde{Q}_{j,\mu})(x,z)$ is given by
\begin{equation}\label{pdofiocompkernel}
 \frac{\lambda^{2d}}{(2\pi)^d}\int e^{i\lambda(t+\varphi(t,x,\xi) -y\cdot \xi + (y-z)\cdot \eta)}\hat{\rho}(t/\tilde c_0)v(t,x,\xi) \tilde{\psi}(y)\tilde{q}_{j,\mu}(y,\eta)\,dtdyd\eta d\xi.
\end{equation}
The critical points of this oscillatory integral satisfy
\begin{equation}\label{critptspdofio}
 \eta = \xi, \;y=z, \;p(x,d_x \varphi(t,x,\xi)) = 1, \;y = d_\xi \varphi(t,x,\xi),
\end{equation}
which arises from differentiation in $y,\eta, t,\xi$ respectively. The third identity here uses the eikonal equation for $\varphi$.  The last 2 identities fix $t$, $y$, $\xi$ so that $\xi$ lies in the cosphere bundle and $t=\Theta_g(x,z)$ is the time at which the minimizing geodesic through $(y,\xi)=(z,\xi)$ passes through $(x,d_x\varphi(t,x,\xi))$.  Hence the kernel is $\mathcal{O}(\lambda^{-N})$ when $\Theta_g(x,z) \notin (\tilde c_0/4,\tilde c_0/2)$.  Moreover, at the critical points, $\varphi(t,x,\xi) -y\cdot \xi=0$ since $\varphi$ is homogeneous of degree 1 in $\xi$.  Stationary phase can be applied to \eqref{pdofiocompkernel} since the mixed Hessian in $(t,\xi)$ is nonsingular, which follows from the same idea as in \cite[p.140]{soggefica}.  This yields the expression \eqref{pdofiocomp} and the claim concerning the support of $V_{j,\mu}$.

We are left to verify that applying $\omega(x,\mu)^\#$ to $V_{j,\mu}$ yields no loss in $2^{-j}$.  Note that $\xi = \eta$ as a function of $(x,z)$ is the unit covector over $z$ of the geodesic joining $x$ and $z$.  If $\gamma(t)$ parameterizes this geodesic with $\gamma(0)=z$, $\xi(\gamma(t),z)$ is constant in $t$ and using the summation convention we have
\begin{equation*}
  0=\prtl_t\big(\xi_i(\gamma(t),z)\big) = \prtl_{x^k}\xi_i(\gamma(t),z)\dot{\gamma}^k(t)= \prtl_{x^k}\xi_i(\gamma(t),z)g^{jk}(\gamma(t))(\dot{\gamma}^\flat)_j(t),
\end{equation*}
where $\dot{\gamma}^\flat$ gives the unit covector of the geodesic at $t$.  Since $V_{j,\mu}=0$ unless $|\dot{\gamma}^\flat(0)-\omega(z,\mu)| \lesssim 2^j$, this shows the rest of \eqref{pdofiocompest}.
\end{proof}
\begin{proof}[Proof of \eqref{sigmaQcommute}]
The main idea is that the leading order term in the stationary phase expansion of the integral kernels of $\tilde{\sigma}_\lambda\circ Q_{\nu}$ and $Q_{\nu}\circ \tilde{\sigma}_\lambda$.  The remainder terms are then $\mathcal{O}(\lambda^{-3/4})$, at which point Sobolev embedding yields the desired gain for the remainder as $\frac{d}{2}-\frac{d}{p_c}= \frac{1}{p_c}+\frac 12$.

The kernel of $\tilde{\sigma}_\lambda\circ Q_{\nu}$ is just \eqref{pdofiocompkernel} but with $\tilde{q}_{j,\mu}(y,\eta)$ replaced by $q_\nu(y,\eta)$.  The critical points are thus determined by \eqref{critptspdofio} and since we have $\eta = \xi$ and $y=z$, we only need to consider $\xi$ and $t$ as functions of $x,z$.  Recall that $t(x,z) = \Theta_g(x,z)$.  Furthermore, the equation $z = d_\xi \varphi(t,x,\xi)$ means that $\xi(x,z)$ is the covector at $z$ of the unit speed geodesic from $z$ to $x$.

On the other hand, the kernel of $Q_{\nu}\circ \tilde{\sigma}_\lambda$ is
\begin{equation*}
 \frac{\lambda^{2d}}{(2\pi)^d}\int e^{i\lambda(t+\varphi(t,y,\xi) -z\cdot \xi + (x-y)\cdot \eta)}\hat{\rho}(t/\tilde c_0)v(t,y,\xi) \tilde{\psi}(z)q_{\nu}(x,\eta)\,dtdyd\eta d\xi.
\end{equation*}
This time the critical points of the phase are given by
\begin{equation*}
 \eta = d_y\varphi(t,y,\xi), \;x=y, \;p(y,d_y \varphi(t,y,\xi)) = 1, \;z = d_\xi \varphi(t,y,\xi).
\end{equation*}
This time we must treat $t,\eta,\xi$ as functions of $(x,z)$.  Once again, $t(x,z) = \Theta_g(x,z)$ and since $x=y$ on the critical set the equation $z = d_\xi \varphi(t,x,\xi)$ means that $\xi(x,z)$ has the same role as above.  The equation $\eta = d_y\varphi(t,x,\xi)$ then implies that $\eta(x,z)$ is the covector at $x$ of the unit speed geodesic from $z$ to $x$.

Given the observations, the only possible difference between the leading order terms in the stationary phase expansions result from evaluating $q_\nu$ at the two different sets of critical points.  The first is $q_\nu(d_\xi\varphi(t,x,\xi),\xi(x,z))$, the second is $q_\nu(x,\eta(x,z))$.  We now appeal to the observation following the definition of $Q_\nu$ that $q_\nu$ is invariant under the flow $\chi_t$ at points within the support of $v$.  Thus since $\chi_t(d_\xi\varphi(t,x,\xi),\xi(x,z)) = \chi_t(z,\xi(x,z))=(x,\eta(x,z))$ by the observations above, the proof is now complete.
\end{proof}

\subsubsection{Preliminaries for bilinear estimates and spatial localization} Taking $N$ large in Lemma \ref{L:pdofiocomp}, we have that up to an error term which is a bilinear operator in
$(h_1,h_2)$ satisfying stronger $L^{q/2}$ bounds,
\begin{multline}\label{bilinearkernel}
   \Upsilon_{j,\mu,\tilde{\mu}}\left(h_1,h_2\right)(x) = \\
   \lambda^{d-1} \iint e^{i\lambda(\Theta_g(x,z)+\Theta_g(x,\tilde z))}
   V_{j,\mu}(x,z)V_{j,\tilde \mu}(x,\tilde z)h_1(z)h_2(\tilde z)\,dzd\tilde z + \text{error}.
\end{multline}

Next, we claim that when $V_{j,\mu}(x,z)V_{j,\tilde \mu}(x,\tilde z)\neq 0$,
\begin{equation}\label{covectorsep}
  |d_x \Theta_g(x,z) - d_x \Theta_g(x,\tilde z) |_{g(x)} \approx 2^j.
\end{equation}
To see this, recall $d_x \Theta_g(x,z)$, $d_x \Theta_g(x,\tilde z)$ give the unit covector at $x$ of the unit speed geodesic from $z$ to $x$, $\tilde z$ to $x$ respectively.  Now consider the coordinates $w',\tilde{w}'$ on the $w_1=0$ plane where the geodesics through $(x,z)$ and $(x,\tilde z)$ intersect this plane and let $\omega$, $\tilde{\omega}$ denote the unit covectors of the geodesic at the respective intersection points, see Figure \ref{F:transverse}.  The assumption on $x,z,\tilde z$ ensures that with respect to Euclidean distance determined by the coordinate system, we have $|\omega-\tilde \omega|\approx 2^j$.  Also note that it suffices to show \eqref{covectorsep} with the intrinsic distance replaced by Euclidean.  Since the geodesics can be reparameterized in terms of the first coordinate $x_1$, we have that
\begin{equation*}
  |d_x \Theta_g(x,z) - d_x \Theta_g(x,\tilde z) | \approx |w'-\tilde{w}'| + |\omega-\tilde{\omega}| \approx |w'-\tilde{w}'| + 2^j,
\end{equation*}
as the covectors on the very left here are over the same point. The lower bound in \eqref{covectorsep} now follows.  For the upper bound, integrating the equation for the $w'$ components of the geodesics parameterized by $x_1$ gives
\begin{equation*}
 |w'-\tilde{w}'| \lesssim |x_1| |d_x \Theta_g(x,z) - d_x \Theta_g(x,\tilde z) | \lesssim \epsilon |d_x \Theta_g(x,z) - d_x \Theta_g(x,\tilde z) | .
\end{equation*}
Taking $\epsilon>0$ small in \eqref{epsilonchart}, the rest of \eqref{covectorsep} follows.

\begin{figure}[t]
 {\input{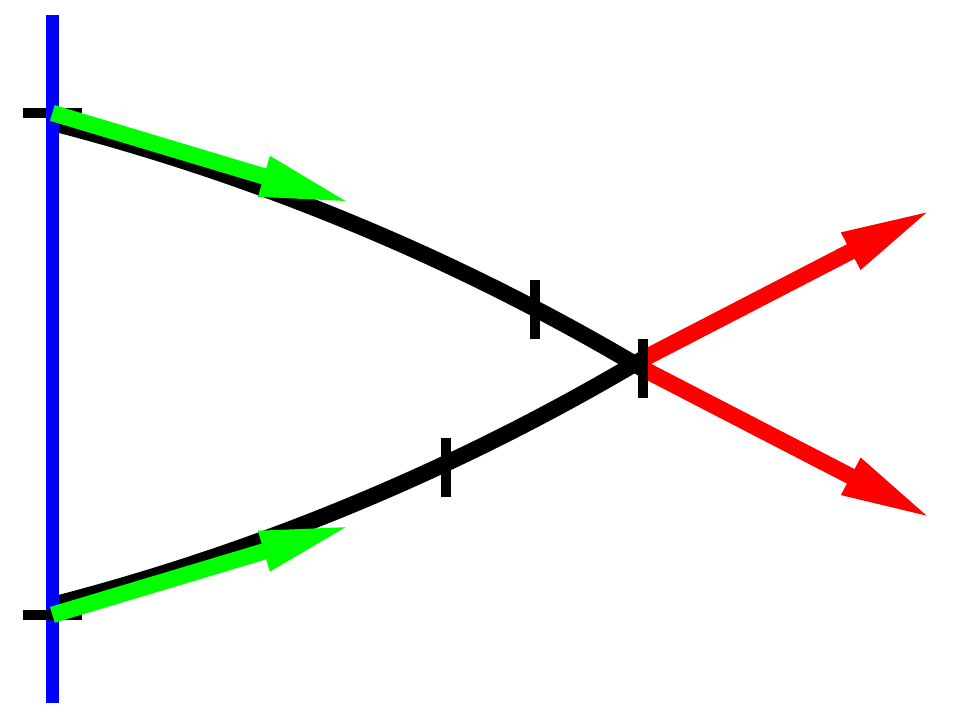_t}}
\caption{Intersecting geodesics at $x$ passing through $z,\tilde z$ and their tangent vectors as raised covectors.}\label{F:transverse}
\end{figure}

Redefine $\beta$ to be a smooth bump function satisfying for $w'\in \RR^{d-1}$,
\begin{equation}\label{newbetaproperties}
  \sum_{l\in 2^j\mathbb{Z}^{d-1}}\beta^2\left(2^{-j}(w'-l)\right)=1, \qquad \supp(\beta)\subset [-1,1]^{d-1}.
\end{equation}
We now return to $\Phi$ as in \eqref{planeinverse} and let $\Upsilon_{j,\mu,\tilde{\mu},l}$ denote the operator defined by replacing the amplitude $V_{j,\mu}(x,z)V_{j,\tilde \mu}(x,\tilde z)$ in \eqref{bilinearkernel} by
\begin{equation}\label{spatialloc}
V_{j,\mu,\tilde \mu, l}(x,z,\tilde z):=  V_{j,\mu}(x,z)V_{j,\tilde \mu}(x,\tilde z)\beta\left(2^{-j}(\Phi(x,\omega(x,\mu))-l)\right),
\end{equation}
where as before, $\omega(x,\mu)$ is the covector of the unit speed geodesic passing through $x$ such that the covector at the intersection point with $y_1=0$ is $\mu/|\mu|_g$ (cf. \eqref{suppqtilde} and the discussion after \eqref{planeinverse}).  Hence this geodesic has coordinates $(\Phi(x,\omega(x,\mu)),\mu)$ at its intersection point with $y_1=0$.  The introduction of the bump function thus has the effect of localizing the amplitude of the oscillatory integral operator to a $2^j$ neighorhood of the geodesic which passes through the $y_1=0$ plane at $(0,l)$ with unit covector $\mu/|\mu|_g$.

We now claim that while \eqref{spatialloc} localizes the kernel in the $x$ coordinates, the support properties of the $V_{j,\mu}$ in Lemma \ref{L:pdofiocomp} imply that
\begin{equation}\label{spatiallocz}
  \text{ if } V_{j,\mu}(x,z)\beta(2^{-j}(\Phi(x,\omega(x,\mu))-l))\neq 0, \;\text{ then } |\Phi(z,\omega(z,\mu))-l| \lesssim 2^j.
\end{equation}
To see this, consider the unit speed geodesic joining $z$ to $x$ and let the unit covectors of the geodesic at these point be denoted by $\tilde{\omega}_z$, $\tilde{\omega}_x$ respectively.  Hence $\Phi(z,\tilde{\omega}_z) = \Phi(x,\tilde{\omega}_x) $ as this geodesic segment will intersect the $y_1=0$ at a unique point.  We now have if the hypothesis in \eqref{spatiallocz} is satisfied, then
\begin{multline*}
  |\Phi(z,\omega(z,\mu))-l| \leq\\
  |\Phi(z,\omega(z,\mu))-\Phi(z,\tilde{\omega}_z)|+|\Phi(x,\tilde{\omega}_x)-\Phi(x,\omega(x,\mu))|+|\Phi(x,\omega(x,\mu))-l|\\
  \lesssim |\omega(z,\mu)-\tilde{\omega}_z| +|\omega(x,\mu)-\tilde{\omega}_x| + 2^j,
\end{multline*}
where we have used the Lipschitz bounds on $\Phi$.  Since $V_{j,\mu}(x,z)\neq 0$, Lemma \ref{L:pdofiocomp} and \eqref{suppqtilde} imply that $(z,\omega_z) \in \supp(\tilde{q}_{j,\mu}(z,\cdot))$ and $|\omega(z,\mu)-\tilde{\omega}_z| \lesssim 2^j$.  Recalling the discussion after \eqref{planeinverse}, $\Psi(x,\cdot),\Psi(z,\cdot)$ are invertible and hence
\begin{equation*}
  |\omega(x,\mu)-\omega_x| \lesssim |\Psi(x,\omega(x,\mu))-\Psi(x,\omega_x)| = |\mu-\Psi(z,\omega_z)| \lesssim |\omega(z,\mu)-\tilde{\omega}_z|,
\end{equation*}
using that $\omega(x,\cdot)$ inverts $\Psi(x,\cdot)$.  The claim in \eqref{spatiallocz} now follows.

We now claim it suffices to prove \eqref{bilinearaoreduction}, with $\Upsilon_{j,\mu,\tilde{\mu}}$ replaced by $\Upsilon_{j,\mu,\tilde{\mu},l}$.  First note that while the identity in \eqref{newbetaproperties} applies to the square sum over the $\beta$, we have more generally
\begin{equation}\label{newbetaapprox}
  \sum_{l\in 2^j\mathbb{Z}^{d-1}}\beta^r\left(2^{-j}(w'-l)\right)\approx_r 1, \qquad 0<r<\infty,
\end{equation}
since for any $l$,
\begin{equation*}
  \#\{\tilde{l} \in 2^j\mathbb{Z}^{d-1}: \supp(\beta(2^{-j}(\cdot -l)))\cap \supp(\beta(2^{-j}(\cdot -\tilde l)))\} = \mathcal{O}(1).
\end{equation*}  Hence H\"older's inequality gives
\begin{equation}\label{littlelq2}
\left\| \Upsilon_{j,\mu,\tilde{\mu}}\left(Q_{j,\mu}h,Q_{j,\tilde{\mu}}h\right)\right\|_{L^{q/2}} \lesssim
\left( \sum_l
\left\| \Upsilon_{j,\mu,\tilde{\mu},l}\left(Q_{j,\mu}h,Q_{j,\tilde{\mu}}h\right)\right\|_{L^{q/2}}^{\frac q2}\right)^{\frac 2q}
\end{equation}
We now apply \eqref{spatiallocz} to see that there exists a bump function $\tilde{\beta}$ such that $\tilde{\beta}_{\mu,l} (z):=\tilde{\beta}(2^{-j}(\Phi(z,\omega(z,\mu))-l))$ satisfies
\begin{equation*}
  \Upsilon_{j,\mu,\tilde{\mu},l}\left(Q_{j,\mu}h,Q_{j,\tilde{\mu}}h\right) = \Upsilon_{j,\mu,\tilde{\mu},l}\left(\tilde{\beta}_{\mu,l} Q_{j,\mu}h,\tilde\beta_{\tilde\mu,l} Q_{j,\tilde{\mu}}h\right)
\end{equation*}
If \eqref{bilinearaoreduction} holds with the additional localization, i.e. this holds with $\Upsilon_{j,\mu,\tilde{\mu},l}$ replacing $\Upsilon_{j,\mu,\tilde{\mu}}$, then the more general bound holds since
\begin{multline*}
  \left( \sum_l \left\| \tilde{\beta}_{\mu,l} Q_{j,\mu}h\right\|_{L^2}^{\frac q2}\left\|\tilde{\beta}_{\tilde\mu,l} Q_{j,\tilde \mu}h\right\|_{L^2}^{\frac q2}\right)^{\frac 2q}\\
  \lesssim   \left( \sum_l \left\| \tilde{\beta}_{\mu,l} Q_{j,\mu}h\right\|_{L^2}^{q}\right)^{\frac 1q} \left(\sum_l \left\|\tilde{\beta}_{\tilde\mu,l} Q_{j,\tilde \mu}h\right\|_{L^2}^{q}\right)^{\frac 1q}
  \lesssim \left\|  Q_{j,\mu}h\right\|_{L^2}\left\|Q_{j,\tilde \mu}h\right\|_{L^2}
  ,
\end{multline*}
where the first inequality here uses Cauchy-Schwarz and the second uses the embedding $\ell^2 \hookrightarrow \ell^q$ along with the fact that we may assume that \eqref{newbetaapprox} holds with $\beta$ replaced by $\tilde\beta$.


\subsubsection{Fermi coordinates, parabolic scaling, and the proof of Lemma \ref{L:offdiag}}
As noted above, the additional localization means that $V_{j,\mu,\tilde \mu, l}$ vanishes unless $x$ is in a $2^j$-neighborhood of the image of the geodesic $\gamma_{l,\mu}$ which passes through the $x_1=0$ plane at $(0,l)$ with unit covector $\mu$.  Given $x$, let $\omega_{l,\mu}(x)$ denote the unit covector on $\gamma_{l,\mu}$ at the closest point to $x$.  The same idea in Lemma \ref{L:pdofiocomp} shows that since $\omega_{l,\mu}(x)$ is within a distance of $\mathcal{O}(2^j)$ to both $\omega(x,\mu)$ and $\omega(x,\tilde \mu)$,
\begin{equation*}
  \left|(\omega_{l,\mu}(x)^\#)^k\prtl^\beta_{x}V_{j,\mu,\tilde \mu, l}(x,z,\tilde z)\right| \lesssim_{k,\beta} 2^{-j|\beta|}.
\end{equation*}

The additional localization of $\Upsilon_{j,\mu,\tilde{\mu},l}$ now allows us to change to Fermi coordinates which straighten $\gamma_{l,\mu}$ so that the $x_1$-coordinate parameterizes the geodesic and $|x'| = \Theta_g(x,\gamma)$. The regularity bounds for $V_{j,\mu,\tilde \mu, l}$ transform as
\begin{equation*}
  \left|\prtl_{x_1}^k\prtl^\beta_{x}V_{j,\mu,\tilde \mu, l}(x,z,\tilde z)\right| \lesssim_{k,\beta} 2^{-j|\beta|}.
\end{equation*}
Note that \eqref{covectorsep} holds in these coordinates as it is intrinsic to $(M,g)$.  Moreover, in these coordinates we may still view the cosphere bundle as a graph in the last $d-1$ variables, and in particular for each $x$, there exists a strictly concave function $\xi'\mapsto r(x,\xi')$ which defines $|\xi|_{g(x)} =1$ in that
\begin{equation*}
  \prtl_{x_1}\Theta_g (x,z) = r(x,d_{x'}\Theta_g(x,z)).
\end{equation*}

For $i=1,2$, let $h_i^{z_1}(z') = h_i(z_1,z')$. After an application of Minkowski's inequality, \eqref{bilinearaoreduction} is reduced to showing that uniformly in $z_1$, $\tilde z_1$ we have
\begin{multline}\label{dilationreduction}
\left\|  \iint e^{i\lambda(\Theta_g(x,z)+\Theta_g(x,\tilde z))}
  V_{j,\mu,\tilde \mu, l}(x,z,\tilde z)h_1^{z_1}(z')h_2^{\tilde z_1}(\tilde z')\,dz'd\tilde z'\right\|_{L^{\frac q2}(\RR^d)}\\
  \lesssim
  \lambda^{- \frac{2d}{q}}2^{j(d-1-\frac{2(d+1)}{q})}\|h_1^{z_1}\|_{L^2(\RR^{d-1})}\|h_2^{\tilde z_1}\|_{L^2(\RR^{d-1})},
\end{multline}
where we have cancelled the factor $\lambda^{d-1}$ from \eqref{bilinearkernel} and the right hand side of \eqref{bilinearaoreduction}. This will follow from the parabolic rescaling $(x',z')\mapsto (2^j x',2^j z')$, rewriting the oscillatory factor as
\begin{equation*}
   e^{i\lambda2^{2j}(\tilde\Theta(x,z)+\tilde\Theta(x,\tilde z))} \quad \text{with} \quad \tilde\Theta(x,z) = 2^{-2j}\Theta_g(x_1,2^jx',z_1,2^jz').
\end{equation*}
The dilation ensures that the derivatives of the amplitude are uniformly bounded in $\lambda$ and $j$.  Moreover, in the new coordinates $\prtl_{x_1}\tilde\Theta = \tilde{r}(x,d_{x'}\tilde\Theta)$ with $\tilde{r}(x,\xi') = 2^{-2j}r(x_1,2^{j}x',2^j\xi')$, whose Hessian in $\xi'$ satisfies the same bounds as $r$. Hence \eqref{dilationreduction} follows from \cite[Theorem 3.3]{BlairSoggeKaknik}, which removes the $\veps$-loss in the bilinear estimates of Lee \cite[Theorem 1.1]{LeeBilinear}.  The latter in turn generalize bilinear Fourier restriction estimates of \cite{TaoBilinear}, \cite{WolffBilinear}.  Indeed, \eqref{covectorsep} ensures that over the support of the dilated amplitude,
\begin{equation*}
 \big| d_{x'}\tilde\Theta(x,z)-d_{x'}\tilde\Theta(x,\tilde z)\big| \approx 1 ,
\end{equation*}
which with the concavity of $\tilde{r}(x,\cdot)$, can be seen to be sufficient for the condition\footnote{In comparison to \cite{BlairSoggeKaknik}, $x$ in the present work plays the role of $z=(x,s)$ there.} in these theorems, as the differentials are uniformly transverse in the graph of $\tilde r$.  When $d=2$, one could also show \eqref{dilationreduction} by the method in \cite[Lemma 3.3]{BlairSoggeRefined} which follows the approach of H\"ormander \cite{HormanderFLP} at the valid endpoint $q=4$ and does not require a dilation of coordinates.

\subsection{Almost orthogonality}\label{SS:aoforbilinear} In this section, we prove Lemma \ref{L:aobound}, primarily focusing on \eqref{aobound}.  The principle is essentially the same as in the proofs of  \cite[(3-4), (3-10)]{BlairSoggeRefined} or \cite[Lemma 6.7]{mss93} and is a ``variable coefficient'' version of the almost orthogonality principle in \cite[Lemma 6.1]{TaoVargasVega}.  The cases $r=1, \infty$ follow from the triangle inequality, so it suffices to consider $r=2$ and interpolate.  Note that when $r=2$, the left hand side of \eqref{aobound} is
\begin{equation*}
  \sum_{(\mu_1,\tilde{\mu}_1),(\mu_2,\tilde{\mu}_2) \in \Xi_j}
  \left\langle \Upsilon_{j,\mu_1,\tilde{\mu}_1}\left(Q_{j,\mu_1}h,Q_{j,\tilde{\mu}_1}h\right),
  \Upsilon_{j,\mu_2,\tilde{\mu}_2}\left(Q_{j,\mu_2}h,Q_{j,\tilde{\mu}_2}h\right)\right\rangle_{L^2}
\end{equation*}
For any fixed $C$, and any $\mu_1$, we have $\#\{\mu_2:|\mu_1-\mu_2| \leq C2^j\} = \mathcal{O}(1)$.  Therefore summing over the pairs $(\mu_1,\tilde{\mu}_1),(\mu_2,\tilde{\mu}_2) \in \Xi_j$ such that $|\mu_1-\mu_2|\leq C2^j$ satisfies the bound
\begin{multline*}
  \sum_{|\mu_1-\mu_2|\leq C2^j}
  \left\langle \Upsilon_{j,\mu_1,\tilde{\mu}_1}\left(Q_{j,\mu_1}h,Q_{j,\tilde{\mu}_1}h\right),
  \Upsilon_{j,\mu_2,\tilde{\mu}_2}\left(Q_{j,\mu_2}h,Q_{j,\tilde{\mu}_2}h\right)\right\rangle_{L^2} \\ \lesssim   \sum_{(\mu,\tilde{\mu})\in \Xi_j} \left\|\Upsilon_{j,\mu,\tilde{\mu}}\left(Q_{j,\mu_1}h,Q_{j,\tilde{\mu}_1}h\right)\right\|_{L^2}^2.
\end{multline*}
It thus suffices to show that there exists $C$ such that the sum over the off-diagonal pairs satisfies
\begin{multline*}
  \sum_{|\mu_1-\mu_2|> C2^j}
  \left\langle \Upsilon_{j,\mu_1,\tilde{\mu}_1}\left(Q_{j,\mu_1}h,Q_{j,\tilde{\mu}_1}h\right),
  \Upsilon_{j,\mu_2,\tilde{\mu}_2}\left(Q_{j,\mu_2}h,Q_{j,\tilde{\mu}_2}h\right)\right\rangle_{L^2} \\ \lesssim \lambda^{-2N}\left\|h\right\|_{L^2}^2.
\end{multline*}

Recalling the form of the kernel of $\Upsilon_{j,\mu,\tilde \mu}$ in \eqref{bilinearkernel}, the main idea is that if $(\mu_1,\tilde\mu_1), (\mu_2,\tilde\mu_2) \in \Xi_j$ and $2^{-j}|\mu_1-\mu_2|$ is sufficiently large then for any $N \in \mathbb{N}$ (possibly larger than that in the previous display),
\begin{equation*}
\begin{gathered}
 \left|    \int e^{i\lambda(\Theta_g(x,z)+\Theta_g(x,\tilde z)-\Theta_g(x,w)-\Theta_g(x,\tilde w))}
   V_{j,\mu_1,\tilde \mu_1, \mu_2,\tilde\mu_2}(x,z,\tilde z, w, \tilde w)dx\right| \lesssim_N \lambda^{-N}, \\
    V_{j,\mu_1,\tilde \mu_1, \mu_2,\tilde\mu_2}(x,z,\tilde z, w, \tilde w):=V_{j,\mu_1}(x,z)V_{j,\tilde \mu_1}(x,\tilde z)V_{j,\mu_2}(x,w)V_{j,\tilde \mu_2}(x,\tilde w).
\end{gathered}
\end{equation*}
Given the regularity estimate \eqref{pdofiocompest} and $2^j \gtrsim \lambda^{-1/8}$, this in turn follows from integration by parts and the bound
\begin{equation*}
\Big| d_x\big(\Theta_g(x,z)+\Theta_g(x,\tilde z)-\Theta_g(x,w)-\Theta_g(x,\tilde w)\big)\Big| \gtrsim 2^j.
\end{equation*}
Indeed, each integration by parts will yield a gain of $\lambda^{-1}2^{2j}$ which is at least $\lambda^{-3/4}$.  Given \eqref{covectorsep}, this in turn will follow from
\begin{equation*}
\Big| d_x\big(\Theta_g(x,z)-\Theta_g(x,w)\big)\Big| \gtrsim 2^j,
\end{equation*}
again assuming $2^{-j}|\mu_1-\mu_2|$ is sufficiently large.  Since $d_x\Theta_g(x,z)$, $d_x\Theta_g(x,w)$ give the covectors along the geodesics joining $z,w$ to $x$, this follows from the same principle as in \eqref{covectorsep}: if $V_{j,\mu_1}(x,z)V_{j,\mu_2}(x,w) \neq 0 $, the two geodesics through $x$ passing through $z,w$ respectively intersect the $y_1=0$ hyperplane with covectors pointing in the direction $\mu_1$, $\mu_2$ respectively.  Since these two vectors are separated by a distance $\gtrsim C2^j$, this is enough.

The bound \eqref{aobounddiag} is shown similarly, the only difference is that we did not multiply the kernel $\tilde{\sigma}_\lambda$ by a localizing factor\footnote{This meant that the application of the linear theory in \eqref{linearapp} was straightforward as the bounds on the amplitude defining $\sigma_\lambda$ and its derivatives are uniform in $\lambda$.} akin to the $\tilde{q}_{\mu,j}$ (cf. \eqref{supportprops}).  However, if we consider the composition $\tilde{\sigma}_\lambda\circ Q_\nu$ as in Lemma \ref{L:pdofiocomp} the proof is nearly identical to the $2^j \approx \lambda^{-1/8}$ case here.

\section{Weaker geometric conditions}
We conclude this work with a discussion of the prospects for proving Theorem \ref{T:mainthm} under weaker hypotheses on the sectional curvatures of $(M,g)$.  Assuming that $(M,g)$ has no conjugate points, the second author showed in \cite{sogge2016localized} that if one had a $o(\lambda^{\delta(p,d)})$ gain in the $L^p$ bounds when $2< p < p_c$, then this would imply a $o(\lambda^{\frac{1}{p_c}})$ gain in the $L^{p_c}$ bounds.  Here we show that there are intermediate hypotheses, stronger than assuming no conjugate points, but weaker than nonpositive curvature, that yield a bound with a logarithmic gain of the form \eqref{mainbound} with a possibly smaller value of $\veps_0$.

There are only two places in the argument above where the nonpositive curvature hypothesis was used in the arguments above over the implicit no conjugate point hypothesis: in \eqref{berardbd} and in the proof of Theorem \ref{T:berardmicro} when bounding the expression of $V(\tilde x, \tilde y)$ in \eqref{mainasymp}, \eqref{coeff}.  In the latter case, the observations in \cite{SoggeZelditchL4} were recalled, showing that the leading coefficient in the Hadamard parametrix $\vartheta(\tilde x, \tilde y)$ is uniformly bounded when the curvatures are nonpositive, yielding \eqref{Ualphabd}.  The other bounds \eqref{asympreg} follow from lower bounds on the curvature and Jacobi field estimates, and here one can allow for the following algebraic growth in $\vartheta$, which is also enough to show \eqref{berardbd},
\begin{equation}\label{conjptgrowth}
\vartheta(\tilde x, \tilde y) \lesssim \Theta_{\tilde g}(\tilde x, \tilde y)^{\frac{d-1}{2}}.
\end{equation}

Recall that $\vartheta$ is characterized by $dV_g = \vartheta^{-2}(\tilde x,\tilde y) d\mathcal{L}(\tilde y)$ in normal coordinates at $\tilde x$, with $\mathcal{L}$ denoting Lebesgue measure.  To motivate the proof of \eqref{conjptgrowth}, let $\tilde \gamma(t):\RR \to \tilde M$ be the unit speed geodesic joining $\tilde x$ to $\tilde y$ with $\tilde \gamma(0) = \tilde x$.  Let $E_1, \dots, E_{d-1} \in T_{\tilde x} \tilde M$ be an orthornormal basis for the orthogonal complement of $\dot{\tilde{\gamma}}(0)$.  Then let $Y_j(t)$ be the normal Jacobi field along $\tilde \gamma(t)$ with initial condition $Y_j(0) =0$ and covariant derivative $D_t Y_j(0) = E_j$.  Also let $Z_j(t)$ denote parallel translation of $E_j$ along $\tilde \gamma(t)$.  The fields $Y_1,\dots, Y_{d-1}$ thus determine a fundamental matrix for solutions $Y(t)$ to the Jacobi equation along $\tilde \gamma(t)$ with $Y(0)=0$ and $\dot{\tilde{\gamma}}(0)\perp Y(t)$; we denote this linear transformation as $\mathcal{Y}(t)$.  Taking polar coordinates it can be seen that
\begin{equation}\label{thetajacobi}
\vartheta^{-2}(\tilde x,\tilde y) = t^{1-d}\left| \det (Y_1(t),\dots,Y_{d-1}(t))\right|, \qquad t=\Theta_{\tilde g} (\tilde x, \tilde y),
\end{equation}
though the right hand side of this identity is more accurately taken to be the determinant of the change of basis matrix from $Y_1,\dots,Y_{d-1}$ to $Z_1,\dots, Z_{d-1}$.

The aforementioned observations in \cite{SoggeZelditchL4} (G\"unther comparison theorem), use that if $M$ has nonpositive sectional curvatures, comparison theorems for Jacobi fields imply that $|Y_j(t)|_g \geq t$, and hence after other considerations, $\vartheta$ is in fact uniformly bounded.  Otherwise, when $(M,g)$ merely lacks conjugate points, the proof of \eqref{conjptgrowth} uses that the determinant in \eqref{thetajacobi} is uniformly bounded from below.  When $d=2$, it was observed in \cite{Berard} that such lower bounds follow from results of Green \cite{GreenConjugatePoints}.  When $d\geq 3$, these were formalized in \cite{bonthonneau2016lower}, though it seems that the crucial bounds were known to Freire and Ma\~n\'e \cite[Lemma I.3]{FreireMane}.

However, \eqref{conjptgrowth} by itself is not enough to imply the kernel estimates \eqref{berardmicro} in Theorem \ref{T:berardmicro}.  Indeed, any reasonable substitute for the summation argument \eqref{stabsum} would require that
\begin{equation}\label{coeffgrowthacceptable}
\vartheta(\tilde x, \tilde y) \lesssim   \Theta_{\tilde g} (\tilde x, \tilde y)^{\frac{d-1}{2}(1-\delta_0)} \text{ for some } 0< \delta_0 <1,
\end{equation}
at which point one would have (with $T=c_0\log \lambda$ as throughout)
\begin{equation}\label{stabsumwocp}
\sum_{\alpha \in \Gamma_{\tuber}}|U_{\alpha}(\tilde w, \tilde z)|\lesssim
\begin{cases}
\lambda^{\frac{d-1}{2}}T^{-\min(1,\frac{(d-1)\delta_0}{2})}, & \delta_0 \neq \frac{2}{d-1},\\
\lambda^{\frac{d-1}{2}}(\log T)T^{-1}, & \delta_0 = \frac{2}{d-1}.
\end{cases}
\end{equation}
The same considerations in \S\ref{S:reviewandmicro} would then hold, though the definition of $c(\lambda)$ in \eqref{berardmicro} would have to be adjusted to be consistent with right hand side here. This in turn leads to adjustments in the exponents of $\log \lambda$ throughout \S\ref{S:reviewandmicro} and \S\ref{S:proofbycontra} which are not difficult to compute.  The remaining considerations in Remark \ref{R:optimalexponent} would then determine the exponent $\veps_0$.

We have outlined the proof of:
\begin{theorem}\label{T:wocp}
Suppose $M$ has no conjugate points and that \eqref{coeffgrowthacceptable} is satisfied.  In particular, this holds if along any unit speed geodesic $\tilde \gamma:\RR \to \tilde M$, the linear transformation $\mathcal{Y}$ determined by the $Y_1, \dots, Y_{d-1}$ above satisfies
\begin{equation}\label{jacobihyp}
 |\mathcal{Y}(t)X|_{g(\tilde\gamma(t))} \gtrsim t^{\delta_0}|X|_{g(\tilde\gamma(0))} , \qquad t \gtrsim 1, \;X \in T_{\tilde{\gamma}(0)}M,\; X \perp \dot{\tilde\gamma}(0).
\end{equation}
for some implicit constant depending only on $(\tilde M, \tilde g)$. Then there exists $\veps_0>0$, possibly different from that in Remark \ref{R:optimalexponentintro}, such that \eqref{mainbound} holds as in the conclusion in Theorem \ref{T:mainthm}.
\end{theorem}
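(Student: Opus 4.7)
The plan is to identify precisely where the nonpositive curvature hypothesis was essential to the arguments in \S\ref{S:reviewandmicro} and \S\ref{S:proofbycontra}, then verify that each such step survives, with a suitably weakened $c(\lambda)$, under the hypothesis \eqref{coeffgrowthacceptable}. As the preceding discussion indicates, there are only two such points of invocation: the uniform boundedness of $\vartheta$ in \eqref{Ualphabd} (which is then summed over $\alpha \in \Gamma_{\tuber}$ in \eqref{stabsum}), and the B\'erard-type bound \eqref{berardbd}. First I would verify that \eqref{jacobihyp} implies \eqref{coeffgrowthacceptable}: the Jacobi-field identity \eqref{thetajacobi} rewrites the determinant in terms of the parallel-transported frame $Z_1,\ldots,Z_{d-1}$ as $|\det \mathcal{Y}(t)|$, whence \eqref{jacobihyp} forces $|\det \mathcal{Y}(t)| \gtrsim t^{(d-1)\delta_0}$ for $t \gtrsim 1$, and rearranging \eqref{thetajacobi} yields $\vartheta(\tilde x, \tilde y) \lesssim \Theta_{\tilde g}(\tilde x, \tilde y)^{\frac{d-1}{2}(1-\delta_0)}$. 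The ancillary estimates \eqref{asympreg} on derivatives of the amplitudes rely only on lower curvature bounds and hence are unaffected.

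Next I would rerun the proof of Theorem \ref{T:berardmicro} with the weaker bound on $\vartheta$. The pointwise bound \eqref{Ualphabd} on individual $U_\alpha$ now carries the algebraically growing factor $\vartheta(\tilde w, \alpha(\tilde z)) \lesssim (1+\Theta_{\tilde g})^{(d-1)(1-\delta_0)/2}$, so summing over $\alpha \in \Gamma_{\tuber}$ using the $\mathcal{O}(2^k)$ lattice-point bound per dyadic shell (with $k \lesssim \log_2 T$) produces the geometric series in \eqref{stabsumwocp}. Consequently Theorem \ref{T:berardmicro} and its Corollary \ref{C:microlp} hold with
\begin{equation*}
c(\lambda) \approx \begin{cases} (\log\lambda)^{-\min(1,\,(d-1)\delta_0/2)}, & \delta_0 \neq \tfrac{2}{d-1}, \\ (\log\log\lambda)(\log\lambda)^{-1}, & \delta_0 = \tfrac{2}{d-1}, \end{cases}
\end{equation*}
which is still a negative power of $\log\lambda$. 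The stationary-phase contribution \eqref{mainmicro} for $\alpha \notin \Gamma_{\tuber}$ is purely geometric and rests on Lemma \ref{L:Toponogov}, which only requires a lower curvature bound, so no change is needed there. The B\'erard kernel bound \eqref{berardbd} follows by the same argument since the underlying wave-kernel estimate accommodates the algebraic growth \eqref{conjptgrowth}, and \eqref{eighththreshold} is unchanged.

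Finally I would propagate this weaker $c(\lambda)$ through the contradiction argument of \S\ref{S:proofbycontra}. The only quantitative change is in \eqref{exponentlimitation}, where the term $\frac{1}{d+1}$ (coming from $c(\lambda)^{1/(d+1)}$ with $c(\lambda) = (\log\lambda)^{-1}$) is replaced by $\frac{\min(1,(d-1)\delta_0/2)}{d+1}$. The resulting constraint
\begin{equation*}
\veps_1\left[\frac{2}{\delta_d}\cdot\frac{p_c+1}{p_c} + 1\right] \leq \frac{\min(1,(d-1)\delta_0/2)}{d+1}
\end{equation*}
is linear in $\veps_1$ with a strictly positive coefficient and a strictly positive right-hand side whenever $\delta_0 > 0$, so it admits a positive solution, yielding a (generally smaller) $\veps_0 > 0$ satisfying the ancillary constraint $\veps_1 \leq 1/(d+1)$ from \eqref{initwkblowup} and \eqref{sigmareplace}. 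The main obstacle here is expected to be not conceptual but rather careful bookkeeping: one must consistently track the precise power of $\log\lambda$ across Corollary \ref{C:microlp}, the inductive bubble-extraction rate $N_k = o((\log\lambda_k)^{2\veps_1/\delta_d})$, and the interpolation in \eqref{lorentzinterp}, which all interact in the final computation. Once this bookkeeping is carried out, Theorem \ref{T:wocp} follows exactly as Theorem \ref{T:mainthm} did.
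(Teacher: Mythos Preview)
Your proposal is correct and follows essentially the same approach as the paper's outlined proof preceding the theorem statement: identify the two uses of nonpositive curvature (the bound on $\vartheta$ entering \eqref{Ualphabd}/\eqref{stabsum} and the B\'erard-type estimate \eqref{berardbd}), replace the uniform bound on $\vartheta$ by \eqref{coeffgrowthacceptable} to obtain the modified sum \eqref{stabsumwocp}, and then track the resulting weaker $c(\lambda)$ through Corollary \ref{C:microlp} and the exponent calculation in \eqref{exponentlimitation}. If anything, you supply more detail than the paper does---in particular the explicit linear constraint on $\veps_1$ and the verification that \eqref{jacobihyp} forces $|\det\mathcal{Y}(t)|\gtrsim t^{(d-1)\delta_0}$ via \eqref{thetajacobi}.
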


\begin{remark}
In general, the validity of \eqref{coeffgrowthacceptable} and \eqref{jacobihyp} on an arbitrary manifold without conjugate points appears to be a long standing open problem.  As observed above, they are satisfied with $\delta_0=1$ when the sectional curvatures are nonpositive.  An argument of Berger \cite[\S3]{BergerVolume}, shows that \eqref{coeffgrowthacceptable} holds with $\delta_0=1$ if one has the intermediate hypothesis that $M$ has no focal points\footnote{While the results in Berger's work are phrased in terms of the convexity radius, the argument fundamentally uses the no focal points hypothesis that for any nontrivial normal Jacobi field with $Y(0)=0$ as above, $|Y(t)|_g$ is an increasing function of $t$.}.  In particular, we could have stated our original theorem for Riemannian manifolds without focal points instead of those with nonpositive curvatures, with exactly the same exponent $\veps_0$.  Results of Klingenberg \cite{KlingenbergAnosov} and Ma\~n\'e \cite{ManeKlingenbergTheorem} show that when the geodesic flow is Anosov, then in fact exponentially growing lower bounds in \eqref{jacobihyp} are satisfied, yield the same exponent when $d \geq 4$ and even better ones for $d=2,3$ as in Remark \ref{R:negcurvature}.  Finally, Eschenburg \cite[Proposition 6]{EschenburgHorospheres} showed that \eqref{jacobihyp} holds with $\delta_0 = \frac 12$ for manifolds with a so-called ``$\rho$-bounded asymptote'' condition, which is stronger than assuming there are no conjugate points, but weaker than assuming there are no focal points.
\end{remark}

\begin{remark}
The observations here carry over similarly to the authors' work \cite{BlairSoggeToponogov} where the key issue was to obtain a bound of the form \eqref{stabsumwocp} for a slightly different choice of $Q_\lambda$ and use Lemma \ref{L:Toponogov} similarly to achieve bounds analogous to \eqref{mainmicro}.  Consequently, if \eqref{coeffgrowthacceptable} is satisfied, we have for $\|f\|_{L^2(M)}=1$ and $\tubel$ as the tubular neighborhood of diameter $\lambda^{-1/2}$ about a geodesic segment $\gamma$ in $M$:
\begin{equation}\label{kaknikaverage}
\int_{\tubel}|\mathbf{1}_{[\lambda, \lambda(\log\lambda)^{-1}]}(P)f|^2\,dV_g \lesssim
\begin{cases}
 (\log\lambda)^{-\min(\frac{d-1}{2}\delta_0,1)}, & \delta_0 \neq \frac{2}{d-1},\\
 (\log\lambda)^{-1}\log \log \lambda, & \delta_0 = \frac{2}{d-1}.
\end{cases}
\end{equation}
In particular, the results in \cite{BlairSoggeToponogov} hold for $(M,g)$ without focal points. Given the results in \cite{BlairSoggeRefinedHighDimension}, these considerations in turn yields a logarithmic gain in the known $L^p(M)$ bounds on spectral clusters when $2<p<p_c$ (possibly with a larger exponent of $(\log\lambda)^{-1}$ than what would result from interpolating the main bound \eqref{mainbound} with the trivial $p=2$ bounds).
\end{remark}

\bibliographystyle{amsalpha}
\bibliography{bibtexdata}
\end{document}